\documentclass{amsart} 

\usepackage[T1]{fontenc}
\usepackage[utf8]{inputenc}
\usepackage{newtxtext}

\usepackage[a4paper, margin=30mm, headheight=14pt]{geometry}

\usepackage{mathtools} 
\usepackage{amsthm,amssymb}

\usepackage{enumitem}
\usepackage{booktabs}
\usepackage{graphicx}
\usepackage{xcolor}
\usepackage{fancyhdr}

\usepackage{aliascnt}

\usepackage{hyperref}

\usepackage[nameinlink,capitalize,noabbrev]{cleveref}

\DeclarePairedDelimiter{\abs}{\lvert}{\rvert}
\DeclarePairedDelimiter{\norm}{\lVert}{\rVert}

\DeclarePairedDelimiterX{\inner}[2]{\langle}{\rangle}{#1,#2}

\newcommand{\R}{\mathbb{R}}
\newcommand{\C}{\mathbb{C}}

\newcommand{\Z}{\mathbb{Z}}
\newcommand{\PP}{\mathcal{P}}
\newcommand{\HH}{\mathbb{H}}

\newcommand{\dd}{\mathop{}\!\mathrm{d}}
\newcommand{\e}{\mathrm{e}}
\newcommand{\ii}{\mathrm{i}}

\DeclareMathOperator{\tr}{tr}

\DeclareMathOperator{\Hopf}{Hopf}

\DeclareMathOperator{\diam}{diam}
\DeclareMathOperator{\Fix}{Fix}
\DeclareMathOperator{\aarg}{arg}
\DeclareMathOperator{\pr}{Pr}

\newcommand{\re}{\operatorname{Re}}
\newcommand{\im}{\operatorname{Im}}

\numberwithin{equation}{section}

\newtheoremstyle{plain-nopunct}
  {\topsep}
  {\topsep}
  {\itshape}
  {}
  {\bfseries}
  {}
  {0.5em}
  {}

\newtheorem{theorem}{Theorem}[section]

\newaliascnt{lemma}{theorem}
\newtheorem{lemma}[lemma]{Lemma}
\aliascntresetthe{lemma}

\newaliascnt{proposition}{theorem}
\newtheorem{proposition}[proposition]{Proposition}
\aliascntresetthe{proposition}

\newaliascnt{corollary}{theorem}
\newtheorem{corollary}[corollary]{Corollary}
\aliascntresetthe{corollary}

\newaliascnt{conjecture}{theorem}

\aliascntresetthe{conjecture}

\newaliascnt{definition}{theorem}
\newtheorem{definition}[definition]{Definition}
\aliascntresetthe{definition}

\newaliascnt{example}{theorem}
\newtheorem{example}[example]{Example}
\aliascntresetthe{example}

\newaliascnt{question}{theorem}

\aliascntresetthe{question}

\newaliascnt{remark}{theorem}
\newtheorem{remark}[remark]{Remark}
\aliascntresetthe{remark}

\crefname{theorem}{Theorem}{Theorems}
\crefname{lemma}{Lemma}{Lemmas}
\crefname{proposition}{Proposition}{Propositions}
\crefname{corollary}{Corollary}{Corollaries}
\crefname{conjecture}{Conjecture}{Conjectures}
\crefname{definition}{Definition}{Definitions}
\crefname{example}{Example}{Examples}
\crefname{question}{Question}{Questions}
\crefname{remark}{Remark}{Remarks}
\crefname{appendix}{Appendix}{Appendices}

\title[Harmonic maps with Scherk asymptotics]{Harmonic Maps from Punctured Riemann Surfaces to the Hyperbolic Plane with Prescribed Scherk Asymptotics}

\author{Qiongling Li}
\address{
Chern Institute of Mathematics and LPMC,
Nankai University,
Tianjin 300071, China
}
\email{qiongling.li@nankai.edu.cn}

\author{Jinsong Liu}

\author{Yihui Xu}
\address{
State Key Laboratory of Mathematical Sciences, AMSS, Chinese Academy of Sciences, Beijing 100190, China; School of Mathematical Sciences, University of Chinese Academy of Sciences, Beijing 100049, China
}
\email{liujsong@math.ac.cn \qquad xuyihui@amss.ac.cn}
\thanks{Corresponding author: Yihui Xu.}

\subjclass[2020]{Primary 53C43; Secondary 58E20,30F30}
\keywords{Harmonic maps, punctured Riemann surfaces, hyperbolic plane, ideal polygons, Hopf differentials}

\begin{document}

\begin{abstract}
    Let $X$ be a genus $g\geq 0$ compact Riemann surface that admits an antiholomorphic involution $\iota$ with nonempty fixed-point set, and let $D = \{p_1,\dots,p_k\}\subset \Fix(\iota)$. At each puncture, we prescribe a Scherk map associated to a given real polynomial quadratic differential with even degree and negative leading coefficient. Here a Scherk map is the harmonic diffeomorphism from $\C$ to the interior of an ideal polygon in $\HH^2$, whose Hopf differential is the given polynomial. We construct a harmonic map
    \[
        h:X\backslash D \to \HH^2
    \]
    whose asymptotic behavior at each puncture matches the prescribed Scherk map. In particular, the image of $h$ tends to an ideal polygon near each end. The resulting $h$ covers harmonic maps obtained by taking the $\HH^2$ factors of horizontal catenoids in $\HH^2 \times \R$. 
\end{abstract}

\maketitle

\tableofcontents

\section{Introduction}\label{sec:introduction}
Harmonic maps between surfaces have been studied extensively, since \cite{SchoenYau1978Univalent}. One remarkable result is from Han-Tam-Treibergs-Wan \cite{HanTamTreibergsWan1995}. They proved that a nonconstant polynomial quadratic differential on the complex plane determines, up to an isometry of $\HH^2$, an orientation-preserving harmonic diffeomorphism from the complex plane onto the interior of an ideal polygon in $\HH^2$. Following \cite{Huang2016Harmonic} , we refer to such maps as \emph{Scherk maps}. 

In \cite{Wolf1989Teichmuller}, Wolf showed that the Teichm{\"u}ller space of genus $g$ closed surface $\mathcal{T}_g$ is homeomorphic to the space of holomorphic quadratic differentials $Q(X)$, using harmonic diffeomorphisms and their Hopf differentials. Here $X$ is a fixed genus $g$ Riemann surface. This gives a global parameterization of $\mathcal{T}_g$. Lohkamp extended this harmonic maps parameterization to punctured surfaces. In \cite{Lohkamp1991}, the Teichm{\"u}ller space of punctured surfaces is identified with the space of holomorphic quadratic differentials on a fixed Riemann surface that have at most simple poles at punctures, via the harmonic diffeomorphisms that have finite energy and their Hopf differentials.

Gupta established a wild analogue of Wolf's parameterizations, allowing poles of higher order and infinite energy. Suppose $X$ is a genus $g$ compact Riemann surface and $D\subset X$ is finite. In \cite{Gupta2021Harmonic}, Gupta proves that, given a marked crowned surface $Y$ that is homeomorphic to $X\backslash D$, and given compatible principal parts at punctures, there exists a unique harmonic diffeomorphism from $X \backslash D$ to $Y$ whose Hopf differential has precisely those principal parts at the punctures. Consequently, he obtained a homeomorphism between the corresponding Teichm{\"u}ller space of crowned hyperbolic surfaces and a space of meromorphic quadratic differentials with the prescribing principal part. 

In both results stated above, the harmonic maps under consideration are diffeomorphisms, and thus their domains and targets have the same topological type. A different class of examples, in which the domain and the target are not homeomorphic, arises from the projection of the minimal surfaces in $\HH^2 \times \R$ to the first factor. In \cite{Pyo2011NewComplete}, Pyo constructed a class of embedded minimal surfaces into $\HH^2 \times \R$ which is conformally a punctured sphere (the number of punctures is at least two). In particular, he constructed conformal minimal embeddings from $\C^*$ to $\HH^2 \times \R$, which is called horizontal catenoids. Such catenoids are also constructed by Morabito and Rodr{\'i}guez in a different way in \cite{MorabitoRodriguez2012}. In \cite{MartinMazzeoRodriguez2014}, using horizontal catenoids, Mart{\'i}n, Mazzeo, and Rodr{\'i}guez constructed complete properly embedded minimal surfaces in $\HH^2 \times \R$ with finite total curvature, genus $g$ and $k$ ends, provided $k$ is larger than a constant depending on $g$. By construction, the entire surface, as well as each of its ends, is invariant under the reflection across $\HH^2 \times \{0\}$. The induced metric endows the surface with a complex structure with respect to which the reflection is an antiholomorphic involution. Their construction gives rise, upon projection onto the $\HH^2$-factor, to a class of harmonic maps. Such a map is from a surface $\Sigma$ to $\HH^2$, where $\Sigma$ is conformally equivalent to a genus $g$ compact Riemann surface with $k$ punctures, and all these $k$ punctures lie in the fixed-point set of an antiholomorphic involution. The Hopf differential of this harmonic map has a global square root, and has a pole of order $4$ at each of the $k$ punctures. Coskunuzer \cite{Coskunuzer2021ArbitraryTopology} proved that, an arbitrary open orientable surface can be properly embedded into $\HH^2 \times \R$ as an area-minimizing complete surface. However, it does not prescribe the asymptotic behavior at its ends or guarantee finite total curvature.

In this paper, we construct new harmonic maps from punctured surfaces to $\HH^2$ with prescribed asymptotic behavior at each of the punctures. The main result is the following:

\begin{theorem}[Main theorem]
\label{thm:main}
    Let $X$ be a compact Riemann surface that admits an antiholomorphic involution $\iota$ with $\Fix(\iota)\neq\varnothing$, and let $D=\{p_1,\ldots,p_k\}\subset \operatorname{Fix}(\iota)$. For each $i=1,\cdots,k$, let $\Phi_i=P_i(z)\,dz^2$ be a nonzero polynomial quadratic differential of even degree, with real coefficients and negative leading coefficient, and let $\omega_i:\C \rightarrow \HH^2$ be the corresponding normalized Scherk map. Its image is the interior of an ideal polygon $\PP_i$. 
    
    Then there exists a harmonic map $h:X\backslash D\rightarrow\HH^2$ such that, there is a constant $C>0$ satisfying
    \[
        d_{\HH^2}\Big(h(p),\omega_i\big(\eta_i(p)\big)\Big)\leq C
    \]
    for every $i$ and for all $p$ sufficiently close to $p_i$. Here $\eta_i$ is a suitably chosen coordinate map on a punctured neighborhood of $p_i$ (see \cref{sec:existence}). It is unique if the union of images of all $\omega_i$ is not contained in a geodesic in $\HH^2$.
    
    Consequently, $\Hopf(h)$ extends meromorphically across each $p_i$ with pole order $\deg(P_i)+4$ at $p_i$.
\end{theorem}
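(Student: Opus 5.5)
The plan is the standard method of sub/super-solutions for harmonic maps with prescribed asymptotics: build an approximate solution, then solve Dirichlet problems on an exhaustion and take a limit. The symmetry from $\iota$ is what makes the approximate solution possible.

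\textbf{Step 1: model maps near each puncture.} Near each $p_i\in\Fix(\iota)$, choose a coordinate $\eta_i$ that sends a punctured neighborhood to the complement of a large disk in $\C$, intertwines $\iota$ with complex conjugation, and makes $\Phi_i$ pulled back by $\eta_i$ have a pole of order $\deg P_i+4$ at $p_i$. Since $P_i$ has real coefficients and even degree with negative leading coefficient, the normalized Scherk map $\omega_i$ is equivariant under conjugation. Conjugation on $\C$ corresponds to reflection of $\HH^2$ across a geodesic $\gamma_i$, and $\PP_i$ is symmetric about $\gamma_i$.

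\textbf{Step 2: a global approximate map.} Fix all the geodesics $\gamma_i$ to be one common geodesic $\gamma$; we may do this by post-composing each $\omega_i$ with an isometry, at the cost of possibly changing the target polygons by isometries. Then construct a smooth map $h_0:X\backslash D\to\HH^2$ that equals $\omega_i\circ\eta_i$ near $p_i$ and is $\iota$-equivariant with respect to reflection across $\gamma$. To join the ends, on the compact core we can map to a bounded region containing pieces of $\gamma$. Since $\omega_i$ is harmonic, the tension $\tau(h_0)$ is compactly supported. This is the place where the fixed points of $\iota$ matter: each $\omega_i$ sends the real axis of $\C$ into $\gamma$, so the ends glue compatibly.

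\textbf{Step 3: existence.} Take an exhaustion $\Omega_n$ of $X\backslash D$ by compact $\iota$-invariant domains. Let $h_n$ be the harmonic map on $\Omega_n$ with boundary values $h_0|_{\partial\Omega_n}$, which exists by Hamilton/Schoen--Yau since $\HH^2$ is NPC. The key estimate is a uniform bound $d(h_n,h_0)\le C$. The function $\rho_n=d(h_n,h_0)$ satisfies $\Delta\rho_n\ge -|\tau(h_0)|$ weakly, and the right-hand side is compactly supported. Since $\rho_n=0$ on $\partial\Omega_n$, we need a barrier: a positive function $f$ on $X\backslash D$ with $\Delta f\le -|\tau(h_0)|$ that stays bounded. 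On a parabolic surface such an $f$ does not exist in general, so instead I would exploit the strict convexity coming from the polygon ends. Near each puncture, $h_0$ is a Scherk map with large Hopf differential, and there $d(h_n,h_0)$ obeys the improved inequality $\Delta\rho\ge c\,\sinh$-type growth times $|\Phi|$. I would build a barrier from $\log|z|$-like functions at each end plus a maximum-principle argument, in the spirit of Han--Tam--Treibergs--Wan and Gupta's wild harmonic maps.

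This uniform bound is the main obstacle. If it fails directly, I would use a contradiction or rescaling argument: if $\sup\rho_n\to\infty$, normalize and pass to a limit. The limit gives a bounded subharmonic function on the punctured surface that is forced to vanish near the ends by the Scherk asymptotics, hence is constant, hence is zero. Given the bound, the interior gradient estimates of Cheng and Schoen--Yau give local $C^{k}$ bounds. A diagonal subsequence converges to a harmonic map $h$ with $d(h,h_0)\le C$, which is the asymptotic statement of \cref{thm:main}.

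\textbf{Step 4: uniqueness and the Hopf differential.} For two solutions $h,h'$ at bounded distance, $d(h,h')$ is bounded and subharmonic on a parabolic surface, hence constant. A constant nonzero distance forces both maps to lie in a single geodesic, which is excluded by the hypothesis that the images of the $\omega_i$ do not all lie in one geodesic. For the pole order, bounded distance to $\omega_i\circ\eta_i$ together with the standard estimate that Hopf differentials of harmonic maps at bounded distance differ by a lower-order term gives $\Hopf(h)-\eta_i^*\Phi_i=O(|\eta_i^*\Phi_i|^{1/2})$. Hence $\Hopf(h)$ is meromorphic at $p_i$ with the same pole order $\deg P_i+4$.
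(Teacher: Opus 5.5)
\textbf{There is a genuine gap in Step 3.} The uniform bound $d(h_n,h_0)\le C$ is the entire difficulty, and none of the mechanisms you propose establishes it.

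You are right that parabolicity of $X\backslash D$ rules out a bounded positive $f$ with $\Delta f\le-|\tau(h_0)|$. Neither substitute works.
\begin{itemize}
\item There is no ``improved inequality'' of the form $\Delta\rho\ge c\,|\Phi|\cdot(\sinh\text{-type})$. The second variation formula (as in \cref{prop:unique}) only bounds $\Delta\rho$ below by the components of $\dd h_n$ and $\dd h_0$ normal to the geodesic from $h_n$ to $h_0$. These need not be large: on the ends $\dd\omega_i$ is exponentially close to rank one, pointing along the edges of $\PP_i$. More importantly, such an inequality says nothing on the core, where $\tau(h_0)\neq0$ and where the drift actually happens.
\item The rescaling argument fails too. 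By the maximum principle on the ends, $\tilde\rho_n=\rho_n/\sup\rho_n$ attains its maximum $1$ on $\operatorname{supp}\tau(h_0)$. A limit is a bounded subharmonic function, hence constant, but nothing forces that constant to be $0$. The zero boundary values on $\partial\Omega_n$ leave no trace in the limit, because on a parabolic surface the harmonic measure of $\partial\Omega_n$ seen from a fixed compact set tends to $0$. Appealing to ``the Scherk asymptotics'' of the limit is circular, since that is what you are proving. The outcome $\tilde\rho\equiv1$ is exactly what occurs when $h_n$ drifts off to $\partial_\infty\HH^2$, and that is the scenario to be excluded.
\end{itemize}

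A telltale sign is that your argument never uses the hypotheses essentially. Joining the ends needs no involution, since $\HH^2$ is contractible. Equivariance of $\omega_i$ needs only real coefficients (\cref{prop:reflection-symmetry-from-polynomial}). The sign of the leading coefficient is never used.

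In the paper, these hypotheses are exactly what prevents the drift (\cref{prop:onepointbound}). By uniqueness of the Dirichlet problem, $h^s\circ\iota=\overline{h^s}$, so $h^s$ maps $\Fix(\iota)$ into the real diameter. By the convex hull property it lands in $(-1,1)\cap\operatorname{conv}\big(\bigcup_i\omega_i(\C)\big)$. This is a compact segment precisely because $\pm1$ is not a vertex of any $\PP_i$ (\cref{prop:criterion-for-pm1}, the only place the negative leading coefficient enters). Equicontinuity then comes from energy bounds on compacta via parachute maps (\cref{prop:usomegabound}, \cref{prop:energybound}).

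\textbf{How to repair Step 3 within your framework.} Take $\partial\Omega_n$ inside the ends, so the boundary values lie in $\bigcup_i\omega_i(\C)$. The same observation then gives what you need:
\begin{itemize}
\item $h_n$ is bounded on $\Fix(\iota)\cap\Omega_n$ by the argument above.
\item $h_0$ is bounded on $\Fix(\iota)\backslash D$: on the ends it takes values in $\omega_i(\R)$, which is bounded again by \cref{prop:criterion-for-pm1}.
\item Hence $\rho_n$ is uniformly bounded on $\Fix(\iota)\cap\Omega_n$.
\item Every component of $X\backslash\Fix(\iota)$ has boundary in $\Fix(\iota)$. The Green potential of $|\tau(h_0)|$ with Dirichlet condition on $\Fix(\iota)$ is therefore the bounded barrier you were missing.
\end{itemize}

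\textbf{Two smaller points.}
\begin{itemize}
\item Do not post-compose the $\omega_i$ by isometries. The theorem concerns the normalized maps, and these already share the real diameter as symmetry axis.
\item In Step 4, an error $O(|\Phi_i|^{1/2})$ does not determine the pole order when $\deg P_i=0$; you need $o(|\Phi_i|)$. \cref{prop:pole} obtains this from the Hessian inequality, in the form $O(|z|^{-2}+|z|^{\deg P_i/2-1})$.
\end{itemize}
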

We impose no restrictions on $g$ or $k$, while allowing greater flexibility in the prescribed asymptotic behavior of $h$ at punctures. The theorem is valid for the Scherk maps associated with negative constant quadratic differentials. We refer to a complete geodesic in $\HH^2$ as a degenerate $2$-gon. Note that $h$ induces a trivial homomorphism on the fundamental group. It remains a question whether these maps give local models of handle-crushing harmonic maps from higher genus surfaces to lower genus surfaces.

\begin{remark}\label{rem:comparisonwithcatenoids}
    Our result covers harmonic maps obtained by taking the $\HH^2$ factor of horizontal catenoids, which have been mentioned above. Suppose 
    \[
        \Psi = (F,h) :\C^* \to \HH^2\times \R
    \]
    is a horizontal catenoid. Let $X=\C \mathbb{P}^1, D=\{p_1=\infty, p_2=0\}$. By construction, the catenoid is asymptotic to the vertical plane $\gamma_i \times \R$ at the end near $p_i$, where $\gamma_i$ is a complete geodesic for each $i=1,2$. Let $\omega_i$ be the harmonic map from $\C$ to $\HH^2$ with image $\gamma_i$ obtained by composing the map $\C\rightarrow \R (z\mapsto \im(z))$ and the $2a$-length parametrization $\R\rightarrow \gamma_i$. The map $\omega_i$ has constant Hopf differential $-a^2\dd z^2$ for some constant $a>0$ depending on $\Psi$, and the correspondence between the horizontal asymptotic directions of $\omega_i$ and ideal vertices of $\gamma_i$ matches with that of $F$. Then the resulting $h$ in \cref{thm:main} is exactly $F$. See \cref{sec:horizontalcatenoids}.
\end{remark}

While the proof is inspired by the ideas in \cite{Huang2016Harmonic}, we give a separate argument here to keep the presentation completely self-contained. The strategy of the proof is as follows. We solve a family of Dirichlet boundary value problems
\begin{equation*}
    \left\{
    \begin{aligned}
        &h^s:\Sigma_s \rightarrow \mathbb{H}^2,\\
        &h^s|_{\partial\Sigma_s}= \omega|_{\partial\Sigma_s}.
    \end{aligned}
    \right.
\end{equation*}
Here $\omega$ is the prescribed Scherk map near the punctures and $\{\Sigma_s\}$ is compact exhaustion of $X \backslash D$. To obtain the precompactness of the resulting family of solutions $\{h^s\}$ on $\Sigma_r$ for fixed $r$, it suffices to establish uniform energy bounds on $\Sigma_r$. The key ingredient in deriving these energy bounds is the boundedness of distance between parachute maps and Scherk maps, which will be introduced in \cref{sec:boundedness}. The assumptions on the antiholomorphic involution $\iota$ play an important role in the proof, as they ensure that the family of solution is bounded at a point. In order to compare the resulting map $h$ with the $\HH^2$ factor of a horizontal catenoid, we establish a principal part criterion of bounded distance between harmonic maps in \cref{sec:principal-bound}, and make full characterization of the induced metric on horizontal catenoids on a chosen coordinate in \cref{prop:isometrybyhopf}. Consequently, $h$ is exactly the $\HH^2$ factor of a horizontal catenoid if the asymptotic data is chosen in a proper way. 

\subsection*{Organization of the paper} \cref{sec:preliminaries} reviews harmonic maps, Hopf differentials, Scherk maps, and some estimates used below. In \cref{sec:equicontinuity} we derive local equicontinuity from uniform energy bounds on compact subsets. In \cref{sec:boundedness}, we establish the uniform bound for the distance between the parachute maps and Scherk map, then use the isoenergy inequality to bound the energy gap between a parachute map and its Scherk model. In \cref{sec:existence} we solve the exhausting Dirichlet problems, prove the asymptotic description at every puncture, and establish convergence of the full family and nondegeneracy of the limit. We also give a uniqueness result \cref{prop:unique}, provided that the image of the harmonic map is not contained in a geodesic. The proof of \cref{thm:main} is finished here. In \cref{sec:principal-bound}, we provide a principal part criterion of bounded distance between harmonic maps. In \cref{sec:horizontalcatenoids}, we characterize the induced metric on horizontal catenoids on a certain chosen coordinate and compare their $\HH^2$ factors with the resulting $h$ in \cref{thm:main}. An exponential decay estimate is established in \cref{app:decayestimate}. \cref{app:hess} contains the Hessian estimate used to prove the meromorphicity at the punctures. The polygonal exhaustion used in the proof of \cref{prop:principaltobounded} is illustrated in \cref{app:polygonexhaustion}. 

\subsection*{Acknowledgments}The authors would like to thank Michael Wolf for his helpful comments on this paper. Qiongling Li was partially supported by the National Key R\&D Program of China No. 2022YFA1006600, the Fundamental Research Funds for the Central Universities, Nankai Zhide foundation, and the Alexander von Humboldt Foundation. Jinsong Liu was supported by the National Key R\&D Program of China (Grant No. 2021YFA1003100), NSFC (Grant No. 12631004), and the Natural Science Foundation of Guangdong Province (Grant No. 2025A1515011213).

\subsection*{AI assistance}
The authors were originally unsuccessfully trying to prove \cref{coro:usboundoncircle} by showing the boundedness of Hopf differentials of parachute maps. That method works only for the special case of regular polygons. ChatGPT version 5.6 plus gave a new proof, that is \cref{prop:usomegabound}. In \cref{app:hess}, the proof of the meromorphicity and the exact pole order of $\Hopf(h)$ at punctures using the Hessian inequality was inspired by a guided conversation with ChatGPT version 5.6 plus. The original proof of \cref{prop:principaltobounded} by the authors was rather complicated. ChatGPT version 5.6 plus also helped to simplify it to the present version. The authors verified, corrected, and revised the proof from AI, and take responsibility for the final content.

\section{Preliminaries}
\label{sec:preliminaries}

Throughout the paper, $ \HH ^2$ denotes the hyperbolic plane and $\sigma$ denotes the hyperbolic metric on $\HH ^2$ with constant curvature $-1$. Unless otherwise specified, we always use the unit disk model for $\HH^2$. 

Suppose $X$ is a compact Riemann surface with genus $g \geq 0$. Let $\iota : X \to X$ be an antiholomorphic involution, and denote its fixed-point set by $\Fix(\iota)=\{p\in X:\iota(p)=p\}$. For example, on $\C\mathbb{P}^1$, the antiholomorphic involution $z \mapsto \bar{z}$ has a nonempty fixed-point set $\R \cup\{\infty\}$. Note that an antiholomorphic involution may not exist for a given Riemann surface, and even when it exists, $\Fix(\iota)$ may be empty. However, for each genus $g$, we can always find a Riemann surface of genus $g$ with such an involution with nonempty fixed-point set. In the following example, we present a class of hyperelliptic curves of any genus $g \geq 1$ admitting an antiholomorphic involution with a nonempty fixed-point set.

\begin{example}
    Let $X$ be the genus $g$ Riemann surface determined by
    \[
        w^2=(z-a_1)(z-a_2)\dots(z-a_{2g+2}),
    \]
    where $a_1<a_2<\dots<a_{2g+2}$ are distinct real numbers. Let $\iota$ be the antiholomorphic involution given by $(z,w)\mapsto (\bar{z},\bar{w})$. For  $i=1,2,\dots2g+2$, we have $(a_i,0) \in \Fix(\iota)$. 
\end{example}

From now on, we assume that $X$ admits an antiholomorphic involution $\iota$ with $\Fix(\iota)\neq\varnothing$.

\subsection{Harmonic maps}\label{subsec:harmonic-maps}
We recall some general concepts related to harmonic maps between two Riemannian manifolds $(M,g)$ and $(N,h).$ We further assume that $M$ is  a Riemann surface.

\begin{definition}\label{def:harmonic-map}
A $C^2$-smooth map $f\colon (M,g)\to (N,h)$  is called harmonic if it is a critical point of the energy functional
\[
    E(f,U)=\int_U \frac{1}{2} \norm*{\dd f}^2 \dd V_M
\]
on every relatively compact open subset $U \subset M$. The function in the integrand is called the energy density of $f$ and is denoted by $e(f)$. The Euler-Lagrange equation of the energy functional is
\[
    \tau(f)=0,
\]
where
$\tau(f):=\tr(\nabla \dd f)$
is the tension field of $f.$ Thus $f$ is harmonic if and only if its tension field vanishes identically. 
\end{definition}

\begin{definition}\label{def:hopf}
    For a $C^2$-smooth map $f\colon (M,g)\to (N,h)$, the Hopf differential of $f$ denoted by $\Hopf(f)$ is 
    \[
        \Hopf(f)=(f^*h)^{(2,0)},
    \]
    a quadratic differential on $M$ with the form $\Hopf (f) = \varphi(z)\dd z^2$ in local coordinates. It is well known that $\Hopf(f)$ is holomorphic if $f$ is harmonic.
\end{definition}    

From now on, let $(N,h)=(\HH ^2,\sigma)$. Write $\sigma=\sigma(z)|\dd z|^2$. Let $f \colon M\to \HH^2$ be a $C^2$-smooth map. In a local coordinate $(U,z=x+\ii y)$ of $M$, write $g=g(z)|\dd z|^2$ and $\Hopf(f)=\varphi(z)\dd z^2$. We recall some formulas for $\varphi$ which can be found in \cite{WanAu1994,SchoenYau1997}.
\begin{align*}
    \varphi(z) &= \sigma (f(z)) f_z\overline{f_{\bar{z}}} = \frac{1}{4}(\norm*{f_x}^2-\norm*{f_y}^2-2\ii \inner*{f_x}{f_y}).
\end{align*}

\begin{definition}\label{def:holomorphic-density}
    Let
    \[
        H=\norm*{\partial f}^2(z) := \frac{\sigma(f(z))}{g(z)} \abs *{\frac{\partial f}{\partial z}}^2(z),\qquad
        L=\norm*{\bar{\partial} f}^2(z) := \frac{\sigma(f(z))}{g(z)} \abs* {\frac{\partial f}{\partial \bar{z}}}^2(z).
    \]
\end{definition}

\begin{lemma}\label{lem:relation-density}
    The following equations hold:
    \begin{itemize}
        \item $e(f)=H+L$.
        \item $J(f)=H-L$.
        \item $\norm*{\Hopf(f)}^2=HL$.        
    \end{itemize}
    Here $J(f)$ is the Jacobian of $f$.
\end{lemma}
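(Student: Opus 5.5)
The plan is to compute everything in a local coordinate $z=x+\ii y$ on $U$. Write $g=g(z)\abs*{\dd z}^2$ and $\sigma=\sigma(f)\abs*{\dd w}^2$, where $w$ is the coordinate on the disk model. The three identities then reduce to algebraic identities among the complex derivatives $f_z$ and $f_{\bar z}$. One point of notation must be fixed first: $\norm*{\Hopf(f)}^2$ means the pointwise norm of the quadratic differential with respect to $g$, namely $\abs*{\varphi}^2/g^2$.

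\emph{Energy density.} We have $e(f)=\frac12\norm*{\dd f}^2=\frac{\sigma(f)}{2g}\bigl(\abs*{f_x}^2+\abs*{f_y}^2\bigr)$, since $g^{-1}$ raises both indices on the domain. Substitute $f_x=f_z+f_{\bar z}$ and $f_y=\ii(f_z-f_{\bar z})$. The parallelogram law then gives $\abs*{f_x}^2+\abs*{f_y}^2=2\bigl(\abs*{f_z}^2+\abs*{f_{\bar z}}^2\bigr)$. Multiplying by $\frac{\sigma(f)}{2g}$ yields exactly $H+L$.

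\emph{Jacobian.} Here $J(f)$ is the ratio of the pulled-back area form $f^*(\sigma\,\dd u\wedge\dd v)$ to the domain area form $g\,\dd x\wedge\dd y$. The Euclidean Jacobian determinant of $w=f(z)$ is $\abs*{f_z}^2-\abs*{f_{\bar z}}^2$. This follows from $\dd w\wedge\dd\bar w=(\abs*{f_z}^2-\abs*{f_{\bar z}}^2)\,\dd z\wedge\dd\bar z$ together with $\dd w\wedge \dd\bar w=-2\ii\,\dd u\wedge\dd v$. Multiplying by the conformal factor ratio $\sigma(f)/g$ gives $H-L$. The sign convention is that $J>0$ for orientation-preserving maps.

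\emph{Hopf differential.} From the stated formula $\varphi=\sigma(f)f_z\overline{f_{\bar z}}$, we get
\[
\norm*{\Hopf(f)}^2=\frac{\abs*{\varphi}^2}{g^2}=\frac{\sigma(f)\abs*{f_z}^2}{g}\cdot\frac{\sigma(f)\abs*{f_{\bar z}}^2}{g}=HL.
\]

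Since all three expressions are independent of the chosen coordinate, the identities hold globally. The only real obstacle is convention bookkeeping. This means the normalization of $\norm*{\Hopf(f)}$: with the $\frac14$-scaled definition of $\varphi$ used here, the norm must be $\abs*{\varphi}/g$ and not $2\abs*{\varphi}/g$. It also means checking the factor $\frac12$ in $e(f)$ against the real-derivative expression. I would verify both against the given formula $\varphi=\frac14(\norm*{f_x}^2-\norm*{f_y}^2-2\ii\inner*{f_x}{f_y})$ before finalizing.
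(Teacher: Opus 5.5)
Your computation is correct: with $\norm*{\Hopf(f)}=\abs*{\varphi}/g$, all three identities follow as you derive them. The paper states this lemma without proof, as a standard fact recalled from Wan--Au and Schoen--Yau, and your direct coordinate verification via $f_x=f_z+f_{\bar z}$, $f_y=\ii(f_z-f_{\bar z})$, $\dd w\wedge\dd\bar w=(\abs*{f_z}^2-\abs*{f_{\bar z}}^2)\,\dd z\wedge\dd\bar z$ and $\varphi=\sigma(f)f_z\overline{f_{\bar z}}$ is exactly the standard argument behind it.
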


In later sections, we will consider the Dirichlet problems for maps from compact surfaces with smooth boundary into $\HH^2$. Since $\HH^2$ is complete and simply connected and has constant sectional curvature $-1$, \cite[Theorem 2.2]{KorevaarSchoen1993} implies that, for every compact surface $\Sigma$ with smooth boundary and every prescribed smooth boundary map $f:\partial \Sigma \to \HH^2$,  there exists a unique harmonic map $u:\Sigma \to \HH^2$ with trace $f$. Moreover, $u$ is the unique global minimizer of Dirichlet energy among all $W^{1,2}$ maps with the same trace. By \cite[Theorem 4 and the subsequent remark]{HildebrandtKaulWidman1977}, $u$ is smooth up to the boundary. This energy-minimizing property is important in the proof of the main theorem.

\subsection{Scherk maps}\label{subsec:scherk}

We recall the following two theorems of \cite{HanTamTreibergsWan1995}.

\begin{theorem}\label{thm:polygon-to-map}
    Suppose $k$ is a positive integer. For every ideal $(k+2)$-polygon $P$ in $\HH^2$, there is an orientation-preserving harmonic diffeomorphism $\omega\colon \C \to \HH^2$ such that $\omega(\C)$ is the interior of $P$ and $\Hopf(\omega)$ is a polynomial of degree $k$.
\end{theorem}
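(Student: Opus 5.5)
We prove \cref{thm:polygon-to-map} (a result of Han--Tam--Treibergs--Wan) by exhaustion: solve Dirichlet problems on larger and larger disks with boundary values running along a model of $\partial P$, obtain uniform local bounds, and pass to the limit. Fix the ideal vertices $\theta_1,\dots,\theta_{k+2}\in\partial\HH^2$ of $P$ in cyclic order, and choose a model polynomial $Q$ of degree $k$. A natural first choice is $Q(z)=z^k$, since its natural coordinate $\zeta=\int\sqrt{Q}\,\dd z$ divides a neighbourhood of $\infty$ into $k+2$ half-plane sectors, one for each vertex. On the disk $B_R=\{|z|<R\}$ we prescribe boundary data $f_R\colon\partial B_R\to\HH^2$. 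On each arc of $\partial B_R$ lying in the $j$-th sector, $f_R$ is essentially constant and close to the vertex $\theta_j$, at hyperbolic distance about $\re\zeta$ from a fixed base point. On the short transition arcs near the directions separating two sectors, $f_R$ runs along the geodesic side $[\theta_j,\theta_{j+1}]$ of $P$. Let $u_R$ be the unique energy-minimizing harmonic map with trace $f_R$ given by \cite[Theorem 2.2]{KorevaarSchoen1993}, smooth up to the boundary.

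The key step is a uniform bound $d(u_R(z),o)\le C(K)$ on each compact $K\subset\C$, independent of $R$. I would obtain it by comparing $u_R$ with explicit sub- and supersolutions. For each side $[\theta_j,\theta_{j+1}]$, the signed distance to that geodesic, composed with $u_R$, is subharmonic in a suitable sense. Comparing it with a barrier built from $\re\zeta$ on the corresponding sector (a harmonic function of size $|z|^{(k+2)/2}$) keeps $u_R$ on the correct side of each geodesic, up to a bounded error, in each sector. Near the origin, a distance-to-a-point bound then follows from the maximum principle applied to the subharmonic function $\cosh d(u_R,o)$, using that the boundary data is controlled sector by sector. Once $C^0$ bounds hold on compacta, Cheng's gradient estimate (or the Schoen--Yau interior estimate) gives uniform $C^1$, and hence $C^{2,\alpha}$, bounds. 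A diagonal subsequence then converges to a harmonic map $u\colon\C\to\HH^2$. \textbf{This uniform $C^0$ bound is the main obstacle.} Choosing the boundary data to match the Hopf-differential asymptotics closely enough that the barriers close up is the delicate part. If the first choice fails, I would fall back on building the boundary data from an explicit approximate solution, namely glued half-plane models $u(\zeta)$ with $\Hopf=\dd\zeta^2$, mapping each sector onto a neighbourhood of a vertex.

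Next, identify the limit. The Hopf differential $\Hopf(u)$ is holomorphic on $\C$. The barrier estimates show that $u$ stays within bounded distance of the model map in each sector, and that $|\Hopf(u)|$ grows at most polynomially, so $\Hopf(u)$ is a polynomial. Its degree is forced to be $k$ because the image accumulates at exactly $k+2$ ideal points, and a polynomial of degree $m$ produces $m+2$ asymptotic directions. To show that $u$ is a diffeomorphism onto the interior of $P$, use the formulas $H-L=J$ and $HL=\norm{\Hopf}^2$ from \cref{lem:relation-density}. By the Bochner formula $\Delta\log H=2(H-L)$ (curvature $-1$ target), and the fact that $H$ cannot vanish identically, $H>0$ away from isolated points and $J\ge0$. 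Degree theory on the boundary data (each $u_R$ has degree one onto its polygon-like image) then gives $J>0$ and injectivity in the limit. Finally, convexity of $\HH^2$ and the maximum principle place the image inside $P$, while the properness coming from the barriers shows the image is all of the interior of $P$.

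Finally, all $(k+2)$-gons are reached by adjusting the model. Either vary the coefficients of $Q$ and use continuity and properness of the map from polynomials to polygons (degree argument), or, more directly, encode the vertex positions in the boundary data as above so that the limit automatically has image $P$. The second route is the one I would pursue, since it avoids any invariance-of-domain argument.
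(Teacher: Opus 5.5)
There is a genuine gap, at exactly the step you flag: nothing in the proposal gives a bound on $u_R$ over compact sets that is independent of $R$, and the mechanisms you suggest cannot give one.

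The functions you use, $\cosh d(u_R,o)$ and the distance to a side, are convex functions of $u_R$. Subharmonicity therefore only bounds their value at $0$ by the circle average of the boundary data. On most of $\partial B_R$ your data lie at distance of order $R^{(k+2)/2}$ from $o$, so these averages blow up with $R$.

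The signed distance $s$ to a side is not subharmonic along $u_R$. Since $\nabla^2\sinh s=(\sinh s)\,\sigma$, the function $\sinh(s\circ u_R)$ is subharmonic only where $s\circ u_R\ge 0$. This keeps the image in $\overline{P}$, but it does not stop $u_R(0)$ from drifting towards a vertex. Your sector barriers also need control of $u_R$ on the radial sides of each sector. Those sides lie in the interior of $B_R$, so that comparison is circular.

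Everything downstream rests on this missing estimate: compactness, polynomial growth of $\Hopf(u)$, properness, and surjectivity onto the interior of $P$. In the paper the corresponding bounds come from comparison with an already existing global harmonic map, the Scherk map $\omega$ (\cref{prop:usomegabound}, \cref{prop:energydif}, \cref{prop:energybound}). A reflection symmetry supplies the one-point bound (\cref{prop:usbound}). For an arbitrary $P$ the symmetry is unavailable, and using $\omega$ would be circular.

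Your fallback, a glued model near $\infty$, is the right kind of object. But building it and proving that $d(u_R,\text{model})$ stays bounded is the proof itself, not a detail, and the gluing carries a constraint you have not accounted for. If the model is to have Hopf differential close to $Q$, then in each horizontal half-plane it must run along a side of $P$ with $2\re\zeta_Q$ as parameter (as in \cref{lem:parameterofdepth}). At each vertex the two adjacent sides must be matched at the same horocyclic height.

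Going once around $\infty$ (for even $k$), the alternating sum of horocyclically truncated side lengths, which is the metric residue of $P$, must then equal, up to sign, twice the real part of the period of $\sqrt{Q}\,\dd z$. That period is governed by the $z^{-1}$ coefficient of $\sqrt{Q}$, as in the proof of \cref{prop:boundtoprincipal}. For $Q=z^k$ the period vanishes, so the models close up only for polygons of zero metric residue. For $k=2$ that is only the ideal square, the image of the Scherk map of $z^2$. In general the principal part of the model has to be tuned to $P$. You would then still need decay of the model's tension and a comparison lemma, which is essentially the existence argument of \cite{Gupta2021Harmonic}.

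Note that the paper does not prove this statement; it quotes \cite{HanTamTreibergsWan1995}. That route, visible in the proof of \cref{prop:reflection-symmetry-from-polygon}, sidesteps the nonlinear $C^0$ problem by solving a scalar one. One takes the entire spacelike constant mean curvature graph $M\subset\R^{2,1}$ with $\Lambda_M=0$ at the vertices of $P$ and $-\infty$ elsewhere. Then $\omega=G\circ F$, where $F\colon\C\to M$ is a conformal parametrization and $G$ is the Gauss map. The polynomial Hopf differential is an output of that construction, not something you must guess in advance.
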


\begin{theorem}\label{thm:polynomial-to-map}
    For a nonconstant polynomial quadratic differential $\Phi$, there is an orientation-preserving harmonic diffeomorphism $\omega \colon \C \to \HH^2$ such that $\Hopf(\omega)=\Phi$ and the image of $\omega$ is the interior of an ideal polygon. Different choices of $\omega$ differ by postcomposition with an isometry of $\HH^2$. Following \cite{Huang2016Harmonic}, we call $\omega$ a
    \emph{Scherk map} associated with $\Phi$.
\end{theorem}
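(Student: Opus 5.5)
The plan follows the route of Han--Tam--Treibergs--Wan: reduce the problem to a scalar elliptic equation on $\C$ through the Bochner formula, solve it, and then integrate the structure equations to recover the map. Let $\Phi=\varphi(z)\dd z^2$ with $\varphi$ a polynomial of degree $k\ge 1$, and use the flat metric $\abs{\dd z}^2$ on $\C$. For an orientation-preserving harmonic map $\omega\colon\C\to\HH^2$ with $\Hopf(\omega)=\Phi$, write $H=\norm{\partial\omega}^2$ and $L=\norm{\bar\partial\omega}^2$, so that $HL=\abs{\varphi}^2$ by \cref{lem:relation-density}. Where $\varphi\neq 0$, the function $w=\tfrac12\log(H/L)>0$ satisfies the Bochner equation
\[
  \Delta w = 4\abs{\varphi}\sinh w ,
\]
and $w$ has logarithmic singularities of positive coefficient at the zeros of $\varphi$. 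Equivalently, $u=\log H$ satisfies $\Delta u = 2e^{u}-2\abs{\varphi}^2e^{-u}$ (up to normalizing constants) and is smooth on all of $\C$.

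\textbf{Existence.} First I will produce a solution $u$ with $e^u\ge\abs{\varphi}$ that is complete, in the sense that $e^u\abs{\dd z}^2$ is a complete metric. I will use the method of sub- and supersolutions on large disks, taking $\log\abs{\varphi}$ (suitably smoothed near its zeros) as subsolution and a large radial function as supersolution. A diagonal limit then gives the solution. Next, because $\C$ is simply connected, the data $(H,L,\varphi)$ satisfy the Gauss--Codazzi compatibility conditions for the pullback metric $\omega^*\sigma=\varphi\,\dd z^2+(H+L)\abs{\dd z}^2+\bar\varphi\,\dd\bar z^2$. This metric has curvature $-1$ where $J=H-L>0$. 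Integrating the frame equations therefore gives a local isometry, hence a harmonic map $\omega\colon\C\to\HH^2$ with $\Hopf(\omega)=\Phi$ and $J>0$, so $\omega$ is an orientation-preserving local diffeomorphism. This map is unique up to postcomposition by an isometry.

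\textbf{Image is an ideal polygon (main obstacle).} I expect this step to be the hardest. I will pass to natural coordinates $\zeta=\int\sqrt{\varphi}\,\dd z$ on each of the $k+2$ sectors near $\infty$ bounded by the asymptotic horizontal directions of $\Phi$. The key estimate is that $w$ decays exponentially in $\abs{\zeta}$, which I will prove by comparing with the model ODE solution via the maximum principle on half-planes. Using this decay, on each sector $\omega$ is exponentially close to the model map from a half-plane onto a geodesic-type region. Vertical directions are stretched toward a single ideal point, while horizontal directions run along a geodesic. Consequently each of the $k+2$ sectors is sent near one ideal vertex, and adjacent vertices are joined by the limits of horizontal trajectories. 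Properness, together with the local-diffeomorphism property, then shows that $\omega$ is a diffeomorphism onto the interior of the ideal $(k+2)$-gon spanned by these vertices.

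\textbf{Uniqueness.} Let $\omega_1,\omega_2$ be two such maps with $\Hopf=\Phi$. Both are diffeomorphisms onto ideal polygons, so their pullback metrics are complete. Hence both functions $u_j=\log H_j$ are complete solutions of the same equation. A maximum-principle argument of Omori--Yau or Cheng--Yau type, applied to $u_1-u_2$ and using completeness together with $e^{u_j}\ge\abs{\varphi}$, gives $u_1=u_2$. Then $H$, $L$ and $\varphi$ coincide, so the pullback metrics agree. By the rigidity of local isometries into $\HH^2$, $\omega_2=A\circ\omega_1$ for some isometry $A$ of $\HH^2$.
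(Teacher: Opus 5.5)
The paper does not prove this statement; it quotes it from Han--Tam--Treibergs--Wan. Your analytic core matches that framework: a complete solution of the Bochner equation, integration to a local isometry, and uniqueness by a Cheng--Yau/Omori--Yau maximum principle. The genuine gap is the step that is supposed to turn the local diffeomorphism $\omega$ into a diffeomorphism onto the interior of an ideal polygon.

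First, your description of the asymptotics is inverted. In natural coordinates $\zeta=x+\ii y$ one has $\omega^*\sigma=4\cosh^2(w/2)\,\dd x^2+4\sinh^2(w/2)\,\dd y^2$ with $w$ exponentially small. So vertical segments are \emph{collapsed}, not stretched. The horizontal rays are the ones that run out to ideal vertices. A sector between two consecutive horizontal directions is sent near an \emph{edge} $\gamma_j$, whose endpoints are the limits along the two bounding rays.

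More importantly, even after this correction your analysis only yields points $\xi_1,\dots,\xi_{k+2}\in\partial_\infty\HH^2$ with consecutive ones distinct. It does not show that they are pairwise distinct, that they occur in cyclic order going exactly once around the circle, or that $\omega(\C)$ lies on the correct side of each $\gamma_j$. Properness cannot supply this. The map $\omega\colon\C\to\HH^2$ is not proper: going out vertically in a horizontal half-plane, the images converge to interior points of edges. Properness as a map into the polygon $P$ presupposes $\omega(\C)\subseteq P$, which is exactly what must be proved. Nothing you wrote excludes an orientation-preserving local diffeomorphism whose curves $\omega(\Gamma(T))$ wrap twice around a star-shaped ideal polygon.

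You need a global injectivity mechanism. One option, the Gauss-map picture used in the proof of \cref{prop:reflection-symmetry-from-polygon}, realizes $\omega$ as the Gauss map of a complete spacelike constant mean curvature surface in $\R^{2,1}$. Such a surface is an entire graph. Its second fundamental form is definite, because the Gauss map is an orientation-preserving local diffeomorphism and the mean curvature is a nonzero constant. So it is the graph of a strictly convex function, and its Gauss map is injective.

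Within your own framework you could instead argue as follows.
\begin{enumerate}
\item The curve $\omega(\Gamma(T))$ bounds an immersed disk, so its turning number is $1$.
\item Orientation preservation keeps every corner turn a left turn, so each ideal vertex contributes a left U-turn.
\item Together these force the sequence $\xi_1,\dots,\xi_{k+2}$ to go around the circle exactly once.
\item A degree count on the region bounded by $\Gamma(T)$ then gives injectivity and $\omega(\C)=P$.
\end{enumerate}

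Second, in the uniqueness step, your claim that a diffeomorphism onto an ideal polygon has complete pullback metric is false. The surface $(\C,\omega^*\sigma)$ is isometric to the interior of $P$ with the hyperbolic metric, which is incomplete because the edges lie at finite distance. What the maximum-principle argument actually needs is completeness of $e^{u_j}\abs{\dd z}^2=H_j\abs{\dd z}^2$. This holds for every orientation-preserving harmonic map with Hopf differential $\Phi$: $H_j\ge L_j$ and $H_jL_j=\abs{\varphi}^2$ give $H_j\ge\abs{\varphi}$, and $\abs{\varphi}\,\abs{\dd z}^2$ is complete for a nonzero polynomial $\varphi$. With this justification, and the curvature bound $-1+\abs{\varphi}^2e^{-2u_j}\ge -1$ of that metric in your normalization, your uniqueness argument goes through.
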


\begin{definition}\label{def:normalizedscherk}
    Suppose $\Phi$ is a nonconstant polynomial quadratic differential and $\omega$ is a Scherk map associated with $\Phi$. There is only one choice of $\omega$ such that 
    \begin{itemize}
        \item $\omega(0)=0$;
        \item $\dd \omega|_{0} \left((1,0)\right) = (a,0)$ for some $a>0$.
    \end{itemize}
    We call this $\omega$ the normalized Scherk map associated with $\Phi$.
\end{definition}

For later use, we separately discuss two classes of special cases: those in which the polygon $P$ possesses certain symmetries, and those in which the coefficients of $\Phi$ satisfy certain conditions.

\begin{proposition}\label{prop:reflection-symmetry-from-polygon}
    For an ideal polygon $P$ that is symmetric with respect to the real axis, we can choose the map $\omega$ in \cref{thm:polygon-to-map} such that $\omega(\bar{z}) = \overline{\omega(z)}$.
\end{proposition}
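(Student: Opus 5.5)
Start from any map $\omega_0\colon\C\to\HH^2$ given by \cref{thm:polygon-to-map} for the symmetric polygon $P$. We build a reflected competitor and compare it with $\omega_0$ using the uniqueness part of \cref{thm:polynomial-to-map}. Let $c(z)=\bar z$ on $\C$, and let $R\colon\HH^2\to\HH^2$, $R(w)=\bar w$, be reflection across the real diameter; $R$ is an orientation-reversing isometry of the disk model. Set
\[
\tilde\omega := R\circ\omega_0\circ c .
\]
Pre- and postcomposition with an isometry and an anticonformal isometry keep harmonicity, because the tension field is invariant under isometries of the target and the energy is invariant under conformal or anticonformal changes of the domain. Since $\tilde\omega$ is a composition of two orientation-reversing maps with an orientation-preserving diffeomorphism, it is an orientation-preserving harmonic diffeomorphism. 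Its image is $R(\operatorname{int}P)=\operatorname{int}P$, using the symmetry of $P$.

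Next we compute the Hopf differential. If $\Hopf(\omega_0)=\varphi(z)\dd z^2$, the formula $\varphi=\sigma(f)f_z\overline{f_{\bar z}}$ together with $\sigma(\bar w)=\sigma(w)$ gives $\Hopf(\tilde\omega)=\overline{\varphi(\bar z)}\,\dd z^2$. This is again a polynomial of the same degree $k$. Thus $\tilde\omega$ and $\omega_0$ are both Scherk maps onto $\operatorname{int}P$, but with Hopf differentials $\Phi=\varphi\,\dd z^2$ and $\bar\Phi$, which need not coincide. To make them equal, we precompose $\omega_0$ with an affine map $A(z)=az+b$ ($a\ne0$). The map $\omega_0\circ A$ is still a harmonic diffeomorphism onto $\operatorname{int}P$, and its Hopf differential is $a^2\varphi(az+b)\,\dd z^2$. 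The goal is to choose $A$ so that the resulting polynomial has real coefficients.

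This requires an input beyond the uniqueness in \cref{thm:polynomial-to-map}: for fixed $P$, the maps from \cref{thm:polygon-to-map} are unique up to precomposition with affine automorphisms of $\C$. This follows from the Han–Tam–Treibergs–Wan correspondence: the polygon is determined by $\Phi$ up to isometry, and the parameter count ($k+1$ complex coefficients modulo the $2$-complex-dimensional affine group, matching $k-1$ real cross ratios) shows the correspondence is a bijection from polynomials modulo affine maps to polygons modulo isometry. Granting this, $\omega_0$ and $\tilde\omega$ have images equal to the same polygon. Hence there are an isometry $I$ and an affine map $A$ with $\tilde\omega=I\circ\omega_0\circ A$. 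In fact $I$ must preserve $P$ and can be absorbed, because the affine class determines the map up to isometry.

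Now define the involution $\tau:=A\circ c$ on $\C$. It is antiholomorphic, and $\tau^2=A\circ c\circ A\circ c$ is affine. From the uniqueness relation, $R\circ\omega_0\circ c=I\circ\omega_0\circ A$, so $J\circ\omega_0=\omega_0\circ\tau$ for $J:=I^{-1}R$. We then use two facts: the only affine maps fixing a Scherk map up to an isometry are of finite order, and $\omega_0$ is injective. These imply that $\tau$ is an antiholomorphic involution of $\C$ commuting with $\omega_0$ up to $J$. Any antiholomorphic involution of $\C$ is conjugate by an affine map $B$ to $c$. Replacing $\omega_0$ by $\omega:=\omega_0\circ B$, and postcomposing with an isometry $K$ that conjugates $J$ to $R$, gives $\omega(\bar z)=\overline{\omega(z)}$. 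Here $J$ is an orientation-reversing involutive isometry that preserves $P$ and has a fixed line, namely the image of $\Fix(\tau)$, which equals $\omega_0(\R)$ up to the conjugation. To keep the image equal to $P$ itself, choose $K$ with $K(P)=P$; this is possible because $J$ and $R$ are both reflection symmetries of $P$, and I would check that they are conjugate within the symmetry group of $P$ by using that $\omega_0$ intertwines them.

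The main obstacle is this last step: ensuring that the reflection $J$ produced by uniqueness is $R$ itself, or conjugate to $R$ by a symmetry of $P$, rather than another reflection symmetry of $P$. If this fails, I would avoid uniqueness altogether. Instead, I would rerun the existence construction of \cref{thm:polygon-to-map} (exhaustion by Dirichlet problems with symmetric boundary data) equivariantly. In that approach the symmetry comes from uniqueness of the Dirichlet solutions, and then persists in the limit.
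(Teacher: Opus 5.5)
\textbf{There is a genuine gap.} It sits at the step where you assert that, for fixed $P$, the maps of \cref{thm:polygon-to-map} are unique up to affine reparametrization and isometry.

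Your dimension count is off. The $k+1$ complex coefficients modulo the $2$-complex-dimensional affine group leave $k-1$ \emph{complex} parameters, i.e.\ $2k-2$ real ones. Ideal $(k+2)$-gons modulo isometry form only a $(k-1)$-real-parameter family. So for $k\geq 2$ the correspondence $\Phi\mapsto P$ has positive-dimensional fibers. In the Han--Tam--Treibergs--Wan picture, the extra parameters are the finite values of $\Lambda_M$ at the vertices, modulo translations of $\R^{2,1}$.

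Take $k=2$ concretely. Every ideal quadrilateral has a reflection symmetry. Hence Scherk maps with Hopf differentials $(z^2+c)\,\dd z^2$ and $(z^2+\bar c)\,\dd z^2$ have isometric image polygons, since the second is the reflected map $R\circ\omega\circ c$ of the first. Yet these two differentials are affinely equivalent only when $c\in\R\cup\ii\R$.

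So $\tilde\omega(\C)=\omega_0(\C)$ does not give $\tilde\omega=I\circ\omega_0\circ A$. For a generic starting map no such $I,A$ exist. For the same reason, you cannot in general choose $A$ so that $\Hopf(\omega_0\circ A)$ has real coefficients. Starting from an arbitrary $\omega_0$ therefore cannot work: the content of the proposition is precisely selecting the right point in the fiber. Everything after this step (the involution $\tau$, the maps $B$ and $K$) rests on the false uniqueness.

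The paper supplies exactly this selection. It takes the unique entire spacelike CMC-$1$ surface $M\subset\R^{2,1}$ with Gauss image $\operatorname{int}P$ and the symmetric normalization $\Lambda_M=0$ at every vertex. The Minkowski reflection $L(x_1,x_2,x_3)=(x_1,-x_2,x_3)$ preserves all of these data because $P$ is symmetric, so uniqueness forces $L(M)=M$. Then $L$ restricts to an anticonformal involution of $M\cong\C$, conjugate to $z\mapsto\bar z$. Since $G\circ L=\overline{G}$, the map $\omega=G\circ F$ is equivariant.

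This also removes your ``main obstacle'' automatically. The target reflection is $R$ itself, because $L$ acts on the Gauss image as complex conjugation, so no conjugation within the symmetry group of $P$ is needed. Your fallback, an equivariant rerun of the existence construction, is in spirit this argument. As written, though, it lacks the two ingredients that make it work: a reflection-invariant normalization of the free data in the fiber, and a uniqueness theorem for the object built from that data.
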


\begin{proof}
    The proof relies on \cite[Proposition 2.2, Theorem 2.3]{HanTamTreibergsWan1995}. 

    Let the ideal vertices of $P$ be 
    \[
        \xi_1,\dots,\xi_n \in \partial_\infty \HH^2.
    \]

    By the symmetry assumption, there exists a permutation $p$ on $\{1,2,...,n\}$ such that $\bar{\xi_i} = \xi_{p(i)}$.

    Let $M \subset \R^{2,1}$ be the entire spacelike surface of constant mean curvature $1$ whose Gauss image is the interior of $P$ and 
    \begin{equation*}
        \left\{
        \begin{aligned}
            &\Lambda_M(x)=0, \quad x=\xi_1,\xi_2,\dots \xi_n, \\
            &\Lambda_M(x)=-\infty,\quad x \in S^1 \backslash\{\xi_1,\xi_2,\dots,\xi_n\}
        \end{aligned}
        \right.
    \end{equation*}
    The definition of $\Lambda$ can be found in \cite[Section 2]{HanTamTreibergsWan1995}. Such $M$ exists uniquely by \cite[Proposition 2.2]{HanTamTreibergsWan1995}, it is the graph of a function $f: \C \to \R$ and it must be parabolic. In fact, $\Lambda$ is given by
    \[
        \Lambda_M(x) = \lim_{r\to +\infty} f(rx)-r
    \]
    for $x \in S^1$. We choose a holomorphic parameterization of $M$:
    \[
        F:\C \to M.
    \]
    Denote the Gauss map of $M$ by $G$.

    Let $L\left((x_1,x_2,x_3)\right)= (x_1,-x_2,x_3)$. Consider the surface $L(M)$. It satisfies the following properties:
    \begin{itemize}
        \item It is entire spacelike and has constant mean curvature $1$;
        \item Its Gauss image is the interior of $P$;
        \item $\Lambda_{L(M)}= \Lambda_{M}$.
    \end{itemize}
    The first two properties are obvious. The third one follows by the observation that $L(M(\mathbf{0}))$ is the graph of $g(x_1,x_2) = f(x_1,-x_2)$ and the symmetry assumption. In fact, by the definition of $\Lambda$,
    \begin{align*}
        \Lambda_{L(M)}(\xi_i) &= \lim_{r\to +\infty} g(r\xi_i)-r \\
        &= \lim_{r\to +\infty} f(r\bar{\xi}_i)-r \\
        &= \lim_{r\to +\infty} f(r\xi_{p(i)})-r  \\
        &=\Lambda_{M}(\xi_{p(i)}) = \Lambda_{M}(\xi_i)=0.
    \end{align*}
    A similar computation shows that $\Lambda_{L(M)}$ takes value $-\infty$ at $x \in S^1 \backslash \{\xi_1,\xi_2,\dots,\xi_n\}$. The uniqueness then gives $L(M) = M$. Thus $L$ can be viewed as an isometric involution of $M$ and the Gauss map of $M$ satisfies $G \circ L =\overline{G}$.
    
    Now consider $\tilde{L} = F^{-1} \circ L \circ F$. Since $L$ is an isometry and reverses the orientation, $\tilde{L}$ is an antiholomorphic involution of $\C$. Thus after a change of coordinate if necessary, we may assume that $\tilde{L}(z)=\bar{z}$. Let $\omega = G \circ F$. Then
    \[
        \omega(\bar{z}) = G \circ F \circ \tilde{L} (z) = G \circ L \circ F(z) =\overline{G(F(z))} = \overline{\omega(z)}.
    \]
\end{proof}

\begin{proposition}\label{prop:reflection-symmetry-from-polynomial}
    Suppose $\Phi$ is a nonconstant polynomial quadratic differential with real coefficients. Then the normalized Scherk map $\omega$ associated with $\Phi$ (see \cref{def:normalizedscherk}) satisfies
    \[
        \omega(\bar{z}) = \overline{\omega(z)}.
    \]
\end{proposition}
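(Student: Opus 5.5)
The plan is to use the uniqueness part of \cref{thm:polynomial-to-map}. Define the reflected map
\[
    \tilde\omega(z) := \overline{\omega(\bar z)},
\]
where the outer bar is complex conjugation in the unit disk model, an orientation-reversing isometry of $\HH^2$. Precomposing with $z\mapsto\bar z$ reverses orientation too, so $\tilde\omega$ is again an orientation-preserving harmonic diffeomorphism $\C\to\HH^2$. Harmonicity is preserved because $z\mapsto \bar z$ is an (anti)conformal isometry of the flat domain and conjugation is an isometry of the target. The image of $\tilde\omega$ is the reflection of the interior of an ideal polygon, hence again the interior of an ideal polygon.

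Next we compute the Hopf differential. Write $\tilde\omega = c\circ\omega\circ c$ with $c$ the conjugation. The pullback metric is $\tilde\omega^*\sigma = c^*(\omega^*\sigma)$, since $c$ is an isometry of $\HH^2$. If $\omega^*\sigma = \varphi\,\dd z^2 + e\,\dd z\,\dd\bar z + \bar\varphi\,\dd\bar z^2$, then pulling back by $z\mapsto\bar z$ turns the $(2,0)$ part into $\overline{\varphi(\bar z)}\,\dd z^2$. Concretely, using the formula $\varphi=\sigma(f)f_z\overline{f_{\bar z}}$, one checks $\Hopf(\tilde\omega)(z)=\overline{\varphi(\bar z)}\,\dd z^2$. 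Since $\Phi=P(z)\dd z^2$ has real coefficients, $\overline{P(\bar z)}=P(z)$, so $\Hopf(\tilde\omega)=\Phi$.

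By \cref{thm:polynomial-to-map}, $\tilde\omega = A\circ\omega$ for some isometry $A$ of $\HH^2$. A priori $A$ need only be an isometry, but since both maps are orientation-preserving, $A$ is orientation-preserving. The normalization in \cref{def:normalizedscherk} then pins down $A$:
\begin{itemize}
    \item $\tilde\omega(0)=\overline{\omega(0)}=0$.
    \item $\dd\tilde\omega|_0(1,0) = c\big(\dd\omega|_0(c(1,0))\big) = c(a,0) = (a,0)$.
\end{itemize}
So $\tilde\omega$ satisfies the same normalization as $\omega$. Therefore $A$ fixes $0$ and fixes the tangent direction $(1,0)$ there, with the same positive scaling. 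An orientation-preserving isometry fixing a point is a rotation about it, and fixing a nonzero tangent vector forces the rotation to be the identity. Hence $A=\mathrm{id}$, i.e.\ $\tilde\omega=\omega$, which is $\omega(\bar z)=\overline{\omega(z)}$. Equivalently, this is just the uniqueness of the normalized Scherk map asserted in \cref{def:normalizedscherk}.

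The main obstacle is the sign and conjugation bookkeeping in the Hopf differential computation, which must verify that the double reflection yields exactly $\overline{\varphi(\bar z)}$ rather than its conjugate-transpose variant. Once that is written out in coordinates using $\tilde\omega_z = \overline{\omega_{\bar z}(\bar z)}$ and $\tilde\omega_{\bar z}=\overline{\omega_z(\bar z)}$, together with $\sigma(\bar w)=\sigma(w)$, the rest is formal.
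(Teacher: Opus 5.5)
Your proposal is correct and follows the paper's proof: define $\tilde\omega(z)=\overline{\omega(\bar z)}$, check that it has the same normalization and Hopf differential $\overline{\varphi(\bar z)}\,\dd z^2=\Phi$, and conclude $\tilde\omega=\omega$ by uniqueness. One small slip: under the usual reading the coordinate formulas are $\tilde\omega_z(z)=\overline{\omega_z(\bar z)}$ and $\tilde\omega_{\bar z}(z)=\overline{\omega_{\bar z}(\bar z)}$; your two are swapped, and substituting them literally would give $\varphi(\bar z)$, although your pullback-of-metric computation already gives the correct $\overline{\varphi(\bar z)}$.
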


\begin{proof}
    For the normalized Scherk map $\omega$ associated with $\Phi$, let
    \[
        \tilde{\omega}(z) = \overline{\omega(\bar{z})}.
    \]
    It is straightforward to check that $\tilde{\omega}$ satisfies the same normalization conditions as $\omega$ and 
    \[
        \Hopf(\tilde{\omega})(z) =\overline{\Hopf(\omega)(\bar{z})}= \Hopf(\omega)(z).
    \]
    The last equality is because the coefficients of $\Phi$ are real. Thus $\tilde{\omega}=\omega$.
\end{proof}

In the subsequent sections, we are particularly interested in the case that the coefficients of $\Phi$ are real and neither $1$ nor $-1$ is a vertex of the image of the normalized Scherk map $\omega$ associated with $\Phi$. The following proposition provides a complete characterization of this case. Note that the order of $\Phi$ must be even.

\begin{proposition}\label{prop:criterion-for-pm1}
    For a non-constant polynomial quadratic differential $\Phi$ with real coefficients and even order, let $\omega$ be the normalized Scherk map associated with $\Phi$. Then neither $1$ nor $-1$ is a vertex of $\omega(\C)$ if and only if the leading coefficient of $\Phi$ is negative. 
\end{proposition}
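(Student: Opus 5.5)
The plan is to read off the behaviour of $\omega$ along the real axis from two facts: the reflection symmetry of \cref{prop:reflection-symmetry-from-polynomial}, and the standard asymptotic description of harmonic maps with polynomial Hopf differential.

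\textbf{Step 1: symmetry and the real axis.} By \cref{prop:reflection-symmetry-from-polynomial}, $\omega(\bar z)=\overline{\omega(z)}$, so $\omega(\R)$ lies in the real diameter $(-1,1)$. Since $\omega(0)=0$, $\dd\omega|_0(1,0)=(a,0)$ with $a>0$, and $\omega$ is injective, the restriction $x\mapsto\omega(x)$ is strictly increasing on $\R$. Since $\omega$ is a proper diffeomorphism onto the interior of $P=\omega(\C)$, we get
\[
\omega(\R)=(-1,1)\cap \operatorname{int}P .
\]
This is an open segment. Each endpoint is of one of two kinds:
\begin{itemize}
\item an endpoint $\pm1\in\partial_\infty\HH^2$, which then lies in the ideal closure of $P$ and is therefore a vertex, because an ideal polygon meets the circle at infinity only in its vertices; or
\item a point of $\HH^2$ lying on an edge of $P$. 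By symmetry this edge is orthogonal to the real axis, and its two vertices are complex conjugates.
\end{itemize}
So the statement reduces to showing that $\lim_{x\to+\infty}\omega(x)$ (and likewise as $x\to-\infty$) is at infinity exactly when the leading coefficient $c$ of $\Phi=P(z)\dd z^2$, $\deg P=n$, is positive.

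\textbf{Step 2: the direction of the real axis relative to $\Phi$.} For large $|z|$ we have $\Phi\approx c z^n\dd z^2$. A ray $\{re^{\ii\theta}\}$ is asymptotically horizontal for $\Phi$ when $c\,e^{\ii(n+2)\theta}>0$, and asymptotically vertical when this quantity is negative. Since $n$ is even, both real directions $\theta=0,\pi$ give the value $c$. Hence the real axis is asymptotically horizontal when $c>0$ and asymptotically vertical when $c<0$. In the natural coordinate $w=\int\sqrt{\Phi}$, near each real end the real axis becomes asymptotically a horizontal (respectively vertical) ray.

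\textbf{Step 3: length along the ray.} I will use the standard exponential decay estimate for harmonic maps to $\HH^2$ with polynomial Hopf differential (Minsky, Han--Tam--Treibergs--Wan; the paper's decay appendix). Measured in the flat metric $|\Phi|$, this estimate says that $L\to0$ and $H\to1$ exponentially in the $|\Phi|$-distance from the zeros of $\Phi$. In the $w$-coordinate the hyperbolic speeds are then
\begin{itemize}
\item horizontal unit vector: speed $\sqrt H+\sqrt L\approx 2$;
\item vertical unit vector: speed $\sqrt H-\sqrt L=O(e^{-\alpha|w|})$ for some $\alpha>0$.
\end{itemize}
\emph{Case $c<0$.} The hyperbolic length of $\omega([x_0,\infty))$ is finite, because a vertical ray of $|\Phi|$-length $\sim r^{(n+2)/2}$ carries exponentially small speed. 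So $\lim_{x\to\infty}\omega(x)$ exists in $\HH^2$, and the real segment ends on an edge. Hence $\pm1$ are not vertices.

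\emph{Case $c>0$.} Along a horizontal ray $\omega$ is asymptotically a unit-speed (times $2$) geodesic parametrization, so the length diverges. Since the image is monotone in $(-1,1)$, it must tend to $\pm1$, which are therefore vertices by Step 1.

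The two cases together give the equivalence.

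\textbf{Main obstacle.} The hard part is making the decay estimate uniform along the whole real ray, including its passage in the $w$-coordinate past the finitely many zeros of $P$. I would handle this by working on a half-plane end of the $|\Phi|$-metric containing the tail of the real axis, where the distance to the zero set grows linearly. The estimate $\sqrt H-\sqrt L\le Ce^{-\alpha d}$, obtained from the Bochner equation $\Delta\log H=2(H-L)$ by a barrier comparison, then applies directly there.
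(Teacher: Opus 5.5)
Your proof is correct, but it takes a different route from the paper.

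\textbf{The paper's route.} The paper never works on the real axis itself. It uses the Han--Tam--Treibergs--Wan fact that the $d+2$ horizontal asymptotic directions $\theta_k=(-\arg c_d+2k\pi)/(d+2)$ of $\Phi=p(z)\,\dd z^2$ correspond bijectively to the vertices of $\omega(\C)$. (Your $P$, $n$, $c$ are the paper's $p$, $d$, $c_d$.) The decay estimate is used there only to pass from horizontal trajectories to radial rays. Since conjugation maps horizontal rays to horizontal rays, a conjugation-fixed vertex, i.e.\ $\pm1$, must correspond to a direction with $\theta_k\in\{0,\pi\}$. Because $d+2$ is even, this happens exactly when $c_d>0$.

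\textbf{Your route.} You restrict to $\omega|_{\R}$. Symmetry and injectivity make it a monotone parametrization of the segment $(-1,1)\cap\operatorname{int}\omega(\C)$, so hyperbolic length along it equals hyperbolic distance. Whether $\pm1$ is a vertex then becomes whether the real tail has infinite length. Since $p(x)$ is real, that tail is an exact horizontal trajectory when $c_d>0$ and an exact vertical trajectory when $c_d<0$, not merely an asymptotic one.

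\textbf{What each buys.} Your argument bypasses the vertex--direction correspondence entirely. It also shows that for $c_d<0$ the real segment ends on edges orthogonal to $\R$. The paper's argument gives the full matching of asymptotic directions with vertices, which is the viewpoint behind its later notion of asymptotics in the normalized sense.

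\textbf{Two small corrections.}
\begin{enumerate}
\item With the flat domain metric $\abs{\dd w}^2$ one has $HL=\norm{\Phi}^2=1$. So $H$ and $L$ both tend to $1$, not $L\to0$. Your speed formulas $\sqrt H\pm\sqrt L$ are correct only under this normalization.
\item The case $c_d>0$ needs no decay estimate. In the coordinate $z$ with Euclidean domain metric, $\norm{\dd\omega(\partial_x)}^2=2\re p(x)+e(\omega)$ and $e(\omega)=H+L\ge 2\sqrt{HL}=2\abs{p}$. Hence the length of $\omega([x_0,\infty))$ is at least $2\int_{x_0}^\infty\sqrt{p(x)}\,\dd x=\infty$.
\end{enumerate}

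\textbf{Your ``main obstacle'' disappears.} Decay is needed only when $c_d<0$. There, any path from $x$ to the zero set of $p$ has $\abs{\Phi}$-length $\gtrsim x^{(d+2)/2}$. The decay lemma therefore gives $\abs{\eta(x)}\lesssim \e^{-c\,x^{(d+2)/2}}$ directly, so $2\abs{\sinh\eta(x)}\sqrt{\abs{p(x)}}$ is integrable. No half-plane bookkeeping is required.
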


\begin{proof}
    Suppose
    \[
        \Phi= p(z)\dd z^2 = \big(c_dz^d+\dots+c_0\big) \dd z^2,
    \]
    where all the coefficients are real and $d$ is even. Away from the zeroes of $\Phi$, there are precisely $d+2$ horizontal directions, and we can choose $d+2$ horizontal rays $h_1,\dots,h_{d+2}$ such that each of them associates to one direction and escapes to infinity. Along these $d+2$ rays, the map $\omega$ converges to $d+2$ distinct vertices of $\omega(\C)$, respectively. See \cite[Section 3]{HanTamTreibergsWan1995} for a proof.

    We now compute these directions. For a fixed $i\in \{1,2,\dots,d+2\}$ and the natural coordinate
    \[
        w(z) = \int ^z \sqrt{p(\zeta)}\dd \zeta,        
    \]
    we have
    \[
        w(h_i(t))=t + w_0
    \]
    for some constant $w_0$.

    Now write $N=(d+2)/2, A=\sqrt{c_d}/N$. Near infinity, we have
    \[
        w(z)=Az^N\big(1+o(1)\big),
    \]
    and thus
    \[
        w(h_i(t))=Ah_i(t)^N+o\big(|h_i(t)|^N\big)
    \]
    for all $t$ sufficiently large. Since $w(h_i)$ is horizontal, we must have
    \[
        \frac{Ah_i(t)^N}{\abs*{Ah_i(t)^N}} \rightarrow 1,\quad t\rightarrow +\infty.
    \]
    This is equivalent to
    \[
        \aarg(c_d)+(d+2)\aarg (h_i(t)) \rightarrow 0\quad  (\mathrm{mod}\,2\pi),  \quad t\rightarrow +\infty.
    \]
    We then conclude that
    \[
        \aarg(h_i(t)) \rightarrow \theta_k, \quad  t\rightarrow +\infty.
    \]
    for some $k$. Here
    \[
        \theta_k=\frac{-\aarg c_d+2k\pi}{d+2}, \quad k=0,1,\dots,d+1.
    \]
    The $\theta_k$ is called the horizontal asymptotic direction of $h_i$.

    Assume that 
    \[ 
        \omega (h_i(t)) \rightarrow v \in \partial_\infty \HH^2, \quad t\rightarrow +\infty.
    \] 
    Suppose
    \[
        w(t\e ^{\ii \theta_k})=X(t)+\ii Y(t),
    \]
    we have
    \[
        X(t)\rightarrow+\infty,\quad Y(t)=o\big(X(t)\big), \quad t\rightarrow +\infty.
    \]
    It is known that (see \cite[section 5]{Han1996Remarks}), in natural coordinate, the length element of a vertical line segment is
    \[
        \dd s = \sqrt{2\left(\cosh(2u)-1\right)} \dd y
    \]
    and the function $u$ obeys the estimate:
    \[
        u(z) \leq \frac{C}{\cosh(d_{\Phi}(z)/2\sqrt{2})}.
    \]
    Here $C>0$ is an absolute constant, $d_\Phi(z)$ is the $\Phi$-distance between $z$ and the zeroes of $\Phi$, which is larger than $a\re(w(z))$ for some constant $a>0$.

    For each $t$, denote $\zeta_t$ the point on $w(h_i)$ that has real part $X(t)$. We have
    \[
        d_{\HH^2}\Big(\omega\big(t\e^{\ii\theta_k}\big),\omega\big(w^{-1}(\zeta_t)\big)\Big) \leq \abs *{Y(t)-\im (w_0)} \e ^{-b X(t)}
    \]
    for some constant $b>0$ and $w_0 \in \C$. Therefore the distance tends to zero as $t$ tends to $+\infty$, and we establish
    \[
        \omega(t \e ^{\ii\theta_k}) \rightarrow v, \quad t \rightarrow +\infty.
    \]
    Denote the radial ray $\{t \e ^{\ii \theta_k}\}$ by $l_k$. Since the images of distinct $h_i$'s tend to distinct vertices, it follows that $\omega(l_k)$ and $\omega(l_j)$ must tend to different vertices if $k\neq j$.

    From the argument above, each ideal vertex $v$ of $\omega(\C)$ can be approached along exactly one of the $d+2$ radial rays $l_0,l_1,\dots,l_{d+1}$, i.e. for a certain $k=0,1,\dots,d+1$ determined by $v$,
    \[
        \omega(t \e^{\ii \theta_k}) \rightarrow v,\quad t \rightarrow +\infty.
    \]

    A curve $\gamma$ is horizontal if and only if its tangent vector $\gamma'$ satisfies
    \[
        \Phi(\gamma',\gamma')>0.
    \]
    If $\gamma$ is horizontal, define $\tilde{\gamma}=\bar{\gamma}$. The tangent vector of $\tilde{\gamma}$ satisfies
    \[
        \Phi(\tilde{\gamma}',\tilde{\gamma}')
        =\Phi(\overline{\gamma'},\overline{\gamma'})
        =\overline{\Phi(\gamma',\gamma')} 
        =\Phi(\gamma',\gamma')>0
    \]
    since the coefficients of $\Phi$ are real. Thus $\bar{\gamma}$ is still horizontal.
    
    The possible values of $\aarg(c_d)$ are $n\pi$, where $n \in \Z$. At least one of $\{1,-1\}$ is a vertex of $\omega(\C)$ if and only if $\omega(\C)$ has a fixed vertex by complex conjugation. This is equivalent to the existence of an index $k$ such that $\omega(l_k)$ and $\omega(\bar{l}_k)$ tend to the same vertex. Take $h_i$ which has horizontal asymptotic direction $\theta_k$. Since $\bar{h}_i$ is still a horizontal ray and $\omega(\bar{h}_i)$ tends to the same vertex as $\omega(h_i)$, we obtain $\bar{h}_i$ has the same horizontal asymptotic direction as $ h_i$. So we must have $\theta_k = n_k \pi$ for some integer $n_k$. This is possible if and only if $c_d>0$. 
\end{proof}

\begin{remark}\label{rem:normalization}
    For the special case $\Phi=a\dd z^2$ with $a \in \R^{-}$, we may choose $\omega$ such that 
    \begin{itemize}
        \item $\omega$ is independent of $x=\re(z)$;
        \item $\omega|_{\ii \R}$ gives a $2\sqrt{|a|}$-speed geodesic $l$ in $\HH^2$ which meet the real axis orthogonally at $\omega(0)$;
        \item $\omega(\bar{z}) =\overline{\omega(z)}$.
    \end{itemize}
    See \cref{rem:dz2}. We do not use the normalization in \cref{def:normalizedscherk}. The choice of the geodesic $l$ may vary depending on the situation. We regard this geodesic, together with its two ideal endpoints, as a degenerate ideal $2$-gon and continue to call these corresponding harmonic maps normalized Scherk maps. All the estimates below remain valid in this degenerate case.
\end{remark}

\subsection{Other auxiliary results}\label{subsec:auxiliary}
In this subsection, we collect several results from the literature that will be used in the subsequent sections.

The following Courant-Lebesgue Lemma allows us to deduce equicontinuity from a uniform energy bound. See \cite[Lemma 4-5]{Courant1937}.
\begin{theorem}[Courant-Lebesgue Lemma, \cite{Courant1937}]\label{thm:cl-lemma}
    Suppose $\Omega \subset \C$ is a planar domain and $K>0$ is a constant. For any $C^2$-smooth map $u:\Omega \to \HH^2$ with $E(u,\Omega) \leq K$, any $p\in \Omega$ and any $\delta \in (0,1)$ such that the closure of $B(p,\sqrt{\delta})$ is contained in $\Omega$, there exists $\rho \in (\delta ,\sqrt{\delta})$ so that for all $x,y \in  \partial B(p,\rho)= \{q: |p-q| = \rho \}$, we have
    \[
        d_{\HH^2}(u(x),u(y)) \leq \sqrt{\frac{8\pi K}{\abs *{\log(\delta)}}}.
    \]
\end{theorem}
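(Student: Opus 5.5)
The plan is to follow the classical Courant–Lebesgue argument, using polar coordinates centred at $p$ and conformal invariance of energy. Energy is conformally invariant in two dimensions, so we may compute with the Euclidean metric on $\Omega$. Writing $u(\rho,\theta)$ in polar coordinates around $p$, we have for $\rho\in(\delta,\sqrt{\delta})$
\[
    K \geq E\bigl(u,B(p,\sqrt{\delta})\setminus B(p,\delta)\bigr)\geq \frac12\int_\delta^{\sqrt{\delta}}\int_0^{2\pi}\frac{1}{\rho^2}\norm*{u_\theta}^2\,\rho\,\dd\theta\,\dd\rho ,
\]
where $\norm{\cdot}$ is the hyperbolic norm and we drop the radial term. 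The integrand is $C^2$-smooth, so the function $\rho\mapsto g(\rho):=\int_0^{2\pi}\norm{u_\theta(\rho,\theta)}^2\dd\theta$ is continuous. The inequality then reads $\int_\delta^{\sqrt\delta} g(\rho)\,\dd\rho/\rho\leq 2K$.

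Next we take a mean-value step. Since $\int_\delta^{\sqrt\delta}\dd\rho/\rho=\tfrac12\abs{\log\delta}$, some $\rho\in(\delta,\sqrt\delta)$ must satisfy
\[
    g(\rho)\leq \frac{2K}{\tfrac12\abs{\log\delta}}=\frac{4K}{\abs{\log\delta}} .
\]
If not, $g>4K/\abs{\log\delta}$ on the whole interval, and integrating gives a contradiction; we may use a strict inequality or continuity to handle equality.

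Finally we bound distances on the circle. For $x,y\in\partial B(p,\rho)$, join them by the shorter arc, of angular length at most $\pi$. By Cauchy–Schwarz,
\[
    d_{\HH^2}(u(x),u(y))\leq \int_{\text{arc}}\norm{u_\theta}\,\dd\theta\leq \sqrt{\pi}\Bigl(\int_0^{2\pi}\norm{u_\theta}^2\dd\theta\Bigr)^{1/2}\leq\sqrt{\frac{4\pi K}{\abs{\log\delta}}} .
\]
This is below the stated bound $\sqrt{8\pi K/\abs{\log\delta}}$. Using the full circle of length $2\pi$ instead of the shorter arc gives exactly the stated constant, so either estimate suffices.

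No step is a real obstacle. The only point needing care is the normalisation of the energy: the factor $\tfrac12$ in the definition of $E$ fixes the constant, and we have tracked it above. The hypothesis $\overline{B(p,\sqrt\delta)}\subset\Omega$ guarantees that the annulus lies in $\Omega$. The condition $\delta<1$ gives $\delta<\sqrt\delta$, so the annulus is nonempty and $\log\delta\neq0$.
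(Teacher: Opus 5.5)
Your argument is correct and is the classical Courant--Lebesgue proof: polar coordinates about $p$, choosing a good radius by averaging $\int_0^{2\pi}\norm{u_\theta}^2\,\dd\theta$ against $\dd\rho/\rho$ over $(\delta,\sqrt{\delta})$, then Cauchy--Schwarz along the circle; the paper does not reprove the lemma and simply cites Courant for it. Integrating over the full circle gives exactly the stated constant $\sqrt{8\pi K/\abs{\log\delta}}$, and your shorter-arc refinement to $\sqrt{4\pi K/\abs{\log\delta}}$ is also valid.
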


To establish such energy bound, we need an interior gradient estimate \cite{Cheng1980Liouville} and the Isoenergy inequality \cite[Theorem 2.4]{Choe1998Isoenergy}:
\begin{theorem}[Cheng's interior gradient estimate, \cite{Cheng1980Liouville}]\label{thm:gradient}
    Let $M$ be a Riemannian manifold with Ricci curvature bounded from below by some non-positive constant $-K\leq 0$. Let $N$ be a complete simply connected Riemannian manifold with non-positive sectional curvature. Let $B(x_0,a)$ denote the closed ball of radius $a>0$ and centered at a fixed point $x_0$ in $M$. Let $f: M \to N$ be harmonic. Let $y_0$ be a fixed point lying outside of $f(B(x_0,a))$. Denote $r$ the distance from $x_0$ on $M$ and $\rho$ the distance from $y_0$ on $N$. Choose
    \[
        b>\sup_{B(x_0,a)} \rho\circ f(x)
    \]
    and let
    \[
        \beta = \inf_{B(x_0,a)} b^2- (\rho\circ f)^2.
    \]
    Then on $B(x_0,a)$ we have
    \[
        \frac{(a^2-r^2)^2\norm*{\nabla f}^2}{(b^2-(\rho \circ f)^2)^2} \leq c \max\left\{ \frac{Ka^4}{\beta}, \frac{a^2(1+Ka)}{\beta},\frac{a^2b^2}{\beta^2} \right\}.
    \]
    Here $c$ is a constant depending only on the dimension of $M$.
\end{theorem}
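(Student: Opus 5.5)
The plan is the classical Cheng--Yau maximum principle argument: build an auxiliary function that vanishes on $\partial B(x_0,a)$ and study it at an interior maximum. Write $m=\dim M$, $\phi=\rho\circ f$ and $e=\norm*{\nabla f}^2$, and set
\[
    G=\frac{(a^2-r^2)^2\, e}{(b^2-\phi^2)^2}.
\]
Since $b>\sup\phi$, the denominator is at least $\beta>0$, so $G$ is continuous on $B(x_0,a)$, vanishes on the boundary sphere, and attains a maximum at an interior point $x_1$. If $e(x_1)=0$ there is nothing to prove, so assume $e(x_1)>0$. If $r$ is not smooth at $x_1$ (cut locus), I would use Calabi's trick: replace $r$ by an upper barrier $r_\epsilon\ge r$ that is smooth near $x_1$ with $r_\epsilon(x_1)=r(x_1)$. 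This barrier satisfies the same comparison inequality as $r$, and we let $\epsilon\to0$ at the end.

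Three differential inequalities for $f$ harmonic are needed.

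First, a Bochner formula. Because $N$ has nonpositive sectional curvature, the target curvature term has a good sign, and with $\mathrm{Ric}_M\ge -K$ we get
\[
    \tfrac12\Delta e\ge \norm*{\nabla \dd f}^2-Ke.
\]
Combined with Kato's inequality $\abs*{\nabla \sqrt e}^2\le \norm*{\nabla\dd f}^2$, this gives a lower bound on $\Delta e$ that also controls $\abs*{\nabla e}^2/e$.

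Second, a convexity estimate for the target distance. Since $N$ is Hadamard, $\rho^2$ is smooth with $\mathrm{Hess}(\rho^2)\ge 2\sigma_N$. Hence
\[
    \Delta(\phi^2)\ge 2e, \qquad \abs*{\nabla\phi^2}^2\le 4\phi^2 e\le 4b^2e.
\]

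Third, Laplacian comparison on the domain, which gives
\[
    \Delta r^2\le 2m+2(m-1)\sqrt K\, r, \qquad \abs*{\nabla r^2}^2=4r^2.
\]

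At $x_1$, take logarithms: $\log G=2\log(a^2-r^2)+\log e-2\log(b^2-\phi^2)$. At the maximum, $\nabla\log G=0$ and $\Delta\log G\le 0$. Use the gradient condition to substitute for $\nabla e/e$ inside the Kato term, and insert the three inequalities above. The good term coming from $-2\log(b^2-\phi^2)$ is at least $4e/(b^2-\phi^2)$. The cross terms are bounded via Cauchy--Schwarz and absorbed into this good term together with terms of the form $\abs*{\nabla r^2}^2/(a^2-r^2)^2$. After multiplying through by $(a^2-r^2)^2$, the result is
\[
    \frac{e\,(a^2-r^2)^2}{b^2-\phi^2}\le C_m\Big(a^2(1+\sqrt K a)+Ka^4\Big)+C_m\,\frac{b^2 a^2}{b^2-\phi^2}\cdot(\text{bounded factor}).
\]
Dividing by $b^2-\phi^2\ge\beta$ then yields $G(x_1)\le c\max\{Ka^4/\beta,\ a^2(1+Ka)/\beta,\ a^2b^2/\beta^2\}$; the $\sqrt K$ versus $K$ discrepancy is harmless after enlarging $c$. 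Since $x_1$ is a maximum point, the bound holds for $G$ on all of $B(x_0,a)$, which is the claim.

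The main obstacle is the bookkeeping at $x_1$. One must choose the Cauchy--Schwarz weights so that the cross term $\inner{\nabla e}{\nabla \phi^2}$ is absorbed using the Kato-improved Bochner inequality and not merely the crude one. One must also make sure the constant depends only on $m$. My first attempt would follow Cheng's original weights exactly, splitting the mixed terms with factor $\tfrac12$ each.
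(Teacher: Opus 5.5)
Your proposal is correct and is exactly Cheng's maximum-principle argument from \cite{Cheng1980Liouville}: Bochner plus Kato, $\nabla^2\rho^2\ge 2\sigma_N$, and Laplacian comparison with Calabi's trick, all evaluated at an interior maximum of $G$. The paper itself only quotes this result and gives no proof.

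When you write it out, two points of bookkeeping need care. First, substitute the gradient condition into the Kato-improved inequality $\Delta\log e\ge -\tfrac12\abs*{\nabla\log e}^2-2K$. The resulting term $-2\abs*{\nabla\phi^2}^2/(b^2-\phi^2)^2$ must be cancelled exactly by the matching positive term $+2\abs*{\nabla\phi^2}^2/(b^2-\phi^2)^2$ in $-2\Delta\log(b^2-\phi^2)$. So you cannot keep only the lower bound $4e/(b^2-\phi^2)$ there; this exact cancellation is why both the exponent $2$ and the Kato improvement are needed. Second, the $\sqrt K$ versus $K$ discrepancy is not absorbed by enlarging $c$ alone. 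It is absorbed via $\sqrt K a^3\le\tfrac12(a^2+Ka^4)$ together with the $Ka^4/\beta$ term.
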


\begin{theorem}[Isoenergy inequality, \cite{Choe1998Isoenergy}]\label{thm:isoenergy}
    If $u$ is a smooth harmonic map from a closed unit Euclidean ball $B \subset \R^n,n\geq 2$ to $N$ of non-positive sectional curvature, then
    \[
        (n-1)E(u,B) \leq E(u|_{\partial B},\partial B).
    \]
\end{theorem}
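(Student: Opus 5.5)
The plan is to combine two integral identities on $B$: a Pohozaev-type identity coming from the divergence-free stress--energy tensor, and a convexity estimate coming from the nonpositive curvature of $N$. Eliminating the radial boundary energy between them will give the inequality. Since $B$ is simply connected, we may lift $u$ to the universal cover of $N$. This changes neither harmonicity nor any energy, so we assume $N$ is a Hadamard manifold. We write
\[
A=\tfrac12\int_{\partial B}\abs{\nabla_T u}^2=E(u|_{\partial B},\partial B),\qquad R=\tfrac12\int_{\partial B}\abs{u_r}^2,\qquad E=E(u,B),
\]
where $\nabla_T$ is the tangential gradient on the unit sphere and $u_r$ is the radial derivative.

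\emph{Step 1 (Pohozaev identity).} For harmonic $u$, the stress--energy tensor $S=e(u)\,g-u^*h$ is divergence free. We contract it with the position vector field $X=x$, which satisfies $\nabla X=\mathrm{Id}$. The divergence theorem then gives
\[
\int_B \langle S,\nabla X\rangle=\int_{\partial B}S(X,\nu).
\]
Here $\langle S,\mathrm{Id}\rangle=ne-2e=(n-2)e$, and on $\partial B$ we have $X=\nu$ and $S(\nu,\nu)=e-\abs{u_r}^2=\tfrac12\abs{\nabla_Tu}^2-\tfrac12\abs{u_r}^2$. This yields $(n-2)E=A-R$, that is, $A=(n-2)E+R$.

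\emph{Step 2 (convexity bound).} Fix $p\in N$ and let $\rho=d_N(p,\cdot)$. On a Hadamard manifold, $\nabla^2(\rho^2/2)\ge h$, so $\Delta\big(\tfrac12\rho^2\circ u\big)\ge\abs{\nabla u}^2=2e$. Integrating over $B$ and applying Cauchy--Schwarz on the boundary gives
\[
2E\le\int_{\partial B}\rho(u)\,\abs{u_r}\le (2R)^{1/2}\Big(\int_{\partial B}\rho(u)^2\Big)^{1/2}.
\]
We choose $p$ to be the barycenter of $u|_{\partial B}$, i.e.\ the minimizer of $q\mapsto\int_{\partial B}d(q,u)^2$, which exists and is unique by strict convexity. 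Set $v=\exp_p^{-1}\circ u:\partial B\to T_pN\cong\R^m$. Then $\int_{\partial B}v=0$ and $\abs{v}=\rho(u)$. By the Rauch comparison theorem in nonpositive curvature, $\exp_p^{-1}$ is $1$-Lipschitz, so $\abs{\nabla_T v}\le\abs{\nabla_T u}$. The first nonzero eigenvalue of the unit sphere $S^{n-1}$ is $n-1$, so the Poincaré inequality for mean-zero functions gives
\[
\int_{\partial B}\rho(u)^2\le \frac{1}{n-1}\int_{\partial B}\abs{\nabla_T u}^2=\frac{2A}{n-1}.
\]
Combining the two displays, $E^2\le RA/(n-1)$.

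\emph{Step 3 (algebra).} Substituting $R=A-(n-2)E$ from Step 1 into Step 2 gives $(n-1)E^2+(n-2)AE-A^2\le0$. The left side factors as $\big((n-1)E-A\big)(E+A)$. Since $E+A\ge0$, it follows that $(n-1)E\le A$. (If $E+A=0$, then $u$ is constant and the inequality is trivial.)

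The main obstacle is Step 2. A naive choice of base point does not work, because one genuinely needs the sharp spectral constant $n-1$ of the sphere. So the key point is choosing $p$ as the barycenter, so that the vector-valued Poincaré inequality applies after pulling back by $\exp_p^{-1}$; the comparison $\abs{\nabla_T v}\le\abs{\nabla_T u}$ is exactly where nonpositive curvature enters a second time. The remaining regularity issues are routine: smoothness up to the boundary justifies the integrations by parts, and $\rho^2$ is smooth on a Hadamard manifold.
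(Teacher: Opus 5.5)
Your proof is correct. The paper gives no proof to compare it with: \cref{thm:isoenergy} is quoted from \cite[Theorem~2.4]{Choe1998Isoenergy} and used only once, with $n=2$ and $N=\HH^2$, to get $E(p^s,B(0,1))\le E(p^s|_{\gamma_1},\gamma_1)$ in \cref{prop:energydif}. Your argument is therefore a self-contained substitute for the citation, and every step holds as written:
\begin{itemize}
\item the Pohozaev identity $(n-2)E=A-R$;
\item the bound $2E\le\int_{\partial B}\rho(u)\abs{u_r}$, from $\nabla^2(\rho^2/2)\ge h$;
\item the mean-zero property of $\exp_p^{-1}\circ u$, which is the first-order condition at the barycenter since $\nabla_q\tfrac12 d(q,x)^2=-\exp_q^{-1}x$;
\item the $1$-Lipschitz property of $\exp_p^{-1}$;
\item the factorization $\big((n-1)E-A\big)(E+A)\le 0$.
\end{itemize}
Two small bookkeeping points. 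First, passing to a Hadamard target tacitly assumes $N$ is complete; this is the standard reading and is automatic for $\HH^2$. Second, existence of the barycenter uses properness of $q\mapsto\int_{\partial B}d(q,u)^2$, which comes from compactness of $u(\partial B)$; strict convexity alone does not give it.

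Compared with the citation, your route makes the source of the sharp constant visible. The constant $n-1$ is exactly the first eigenvalue of $S^{n-1}$, and the barycenter normalization is what lets you use it. In the only case the paper needs, the argument is a few lines: for $n=2$, Step 1 reads $R=A$, and Step 2 then gives $E^2\le A^2$.

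You can also avoid the stress--energy tensor entirely, with a variant in the spirit of the comparison-map proof of \cref{lem:generalizediso}. Since $u$ minimizes energy for its boundary values, compare it with the geodesic cone $w(r\theta)=\exp_p\big(r\exp_p^{-1}u(\theta)\big)$ over the same barycenter $p$. Then $\abs{w_r}=\rho(u(\theta))$. Convexity of $\abs{J}$ for Jacobi fields with $J(0)=0$ shows that the angular gradient of $\theta\mapsto w(r\theta)$ is at most $r\abs{\nabla_T u}(\theta)$. Hence
\[
2E(u)\le 2E(w)\le\tfrac1n\Big(\int_{\partial B}\rho(u)^2+2A\Big)\le\tfrac1n\Big(\tfrac{2A}{n-1}+2A\Big)=\tfrac{2A}{n-1}.
\]
That version uses minimality, whereas yours uses harmonicity twice (the divergence-free stress tensor and the subharmonicity of $\rho^2\circ u$). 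Both rest on the same barycenter--Poincar\'e step.
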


In $\HH^2$, suppose $\gamma$ is a smooth curve with small total absolute geodesic curvature. Then the distance between the two end points of $\gamma$ has a lower bound given by a multiple of the length of $\gamma$.

\begin{lemma}\label{lem:arclength-to-distance}
    For $p,q \in \HH^2$, let $\gamma$ be a smooth curve connecting $p$ and $q$ with
    \[
        \int_\gamma |k_g| \leq A <1.
    \]
    Then 
    \[
        d_{\HH^2}(p,q) \geq l_{\HH^2}(\gamma) (1- A).
    \]
\end{lemma}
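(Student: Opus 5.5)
The plan is to compare $\gamma$ with the geodesic segment $[p,q]$ through the angle that $\gamma'$ makes with a moving direction, using Gauss--Bonnet in $\HH^2$ to control that angle.

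\textbf{Setup.} Parametrize $\gamma$ by arclength $s\in[0,\ell]$, where $\ell=l_{\HH^2}(\gamma)$. If $p=q$, then $\gamma$ is a closed curve. By Gauss--Bonnet (or Fenchel's theorem in $\HH^2$), a closed curve has total absolute geodesic curvature at least $2\pi$, unless it is constant. The hypothesis $A<1$ therefore forces $\ell=0$, and we may assume $p\neq q$. Let $\rho(x)=d_{\HH^2}(p,x)$ be the distance from the starting point $p$. It is smooth away from $p$ and satisfies $\norm{\nabla\rho}=1$. Let $\theta(s)$ be the angle between $\gamma'(s)$ and $\nabla\rho(\gamma(s))$. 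Then
\[
\frac{\dd}{\dd s}\rho(\gamma(s))=\cos\theta(s),
\]
and so $d_{\HH^2}(p,q)=\rho(q)=\int_0^\ell\cos\theta\,\dd s$. It therefore suffices to prove $\abs{\theta(s)}\le A$ for all $s$, since then $\cos\theta\ge\cos A\ge 1-A$.

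\textbf{Angle bound.} Fix $s_0\in(0,\ell]$. Consider the region bounded by $\gamma|_{[0,s_0]}$ and the geodesic segment from $p$ to $\gamma(s_0)$. First treat the case where $\gamma|_{[0,s_0]}$ is embedded and meets that segment only at its endpoints. Gauss--Bonnet on this geodesic polygon in $\HH^2$ gives
\[
\text{(turning of }\gamma)+(\text{exterior angles})=2\pi+\text{Area}.
\]
Equivalently, the interior angle $\alpha$ at $p$ and the interior angle $\beta$ at $\gamma(s_0)$ satisfy $\alpha+\beta\le\int_0^{s_0}\abs{k_g}-\text{Area}\le A$. The angle $\theta(s_0)$ equals $\beta$ up to orientation, because $\nabla\rho$ at $\gamma(s_0)$ is tangent to the radial geodesic from $p$. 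This gives $\abs{\theta(s_0)}\le A$.

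Near $s_0=0$ one needs $\theta\to 0$ consistently; this holds because $\gamma$ starts tangentially and $\rho$ is smooth on small circles. The general case, where $\gamma$ crosses the segment or self-intersects, is handled by a continuity argument on the first time $\abs{\theta}$ reaches $A$. One splits at intersection points into simple loops and applies the same Gauss--Bonnet estimate to each piece, each of which only uses part of the total curvature budget $A$.

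\textbf{Main obstacle.} The main difficulty is making the angle bound rigorous when $\gamma$ is not a simple arc together with the chord, or when $\gamma$ passes through $p$ again, where $\rho$ is singular. The cleaner route I would try first avoids this. Work in the tangent-angle formulation along the geodesic developing map, and compare with the Euclidean argument: in $\R^2$ the tangent direction stays within angle $A$ of $\gamma'(0)$, giving $\abs{q-p}\ge\ell\cos A$. In $\HH^2$, negative curvature only increases distances; this can be made precise by the Hessian comparison $\nabla^2\rho\ge\coth\rho\,(g-\dd\rho^2)\ge 0$. That comparison shows $\rho(\gamma(s))$ grows at least as fast as in the flat model. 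Combining it with the bound $\abs{\theta'}\le\abs{k_g}$, which holds wherever $\rho$ is convex along the direction of $\gamma$, yields $\abs{\theta}\le A$ directly. This gives $d_{\HH^2}(p,q)\ge(1-A)\,l_{\HH^2}(\gamma)$.
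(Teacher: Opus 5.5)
Your reduction is sound: with $\rho=d_{\HH^2}(p,\cdot)$ and $\theta$ the angle between $\gamma'$ and $\nabla\rho$, an angle bound together with $\gamma(s)\neq p$ for $s>0$ gives $d_{\HH^2}(p,q)=\int_0^\ell\cos\theta\,\dd s\ge \ell(1-A)$. However, neither of your two routes actually proves the angle bound.

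The Gauss--Bonnet route is carried out only when $\gamma|_{[0,s_0]}$ and the chord bound a disk. ``Split into simple loops, each using part of the budget'' is not an argument. Here is what the general case needs. First, any simple sub-loop of $\gamma$ has $\int\abs{k_g}>\pi$ by Gauss--Bonnet, so $A<1$ forces $\gamma$ to be embedded and never return to $p$. Note that a loop with a corner only forces $>\pi$, not $2\pi$ as you claim. Second, if $\gamma|_{(0,s_0)}$ meets the chord, apply Gauss--Bonnet to the region bounded by $\gamma|_{[t^*,s_0]}$ and the sub-segment of the chord, where $t^*$ is the last meeting time; tangential contacts need some care.

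In your ``cleaner route'' the key inequality $\abs{\theta'}\le\abs{k_g}$ is false. In geodesic polar coordinates about $p$, Liouville's formula for the signed angle gives $\theta'=k_g-\coth\rho\,\sin\theta$. So on any geodesic stretch of $\gamma$ with $\sin\theta\neq 0$ you have $k_g=0$ but $\theta'\neq 0$.

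What convexity of $\rho$ really gives is a one-sided bound, and that is all you need. Since $\nabla^2\rho(\gamma',\gamma')=\coth\rho\,\sin^2\theta\ge 0$,
\[
\frac{\dd}{\dd s}\cos\theta=\nabla^2\rho(\gamma',\gamma')+\inner{\nabla\rho}{\nabla_{\gamma'}\gamma'}\ge-\abs{k_g}.
\]
With $\theta(0^+)=0$ this gives $\cos\theta(s)\ge 1-\int_0^s\abs{k_g}\ge 1-A>0$. Hence $\rho\circ\gamma$ is increasing, a short continuity argument shows $\gamma$ never returns to $p$, and the lemma follows with no topology at all.

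This is precisely the paper's proof, except that the paper uses the Busemann function $B$ of the ray from $p$ in direction $-\gamma'(0)$ instead of $\rho$. $B$ is smooth and convex on all of $\HH^2$ with $\nabla B(p)=\gamma'(0)$. So $f=B\circ\gamma$ has $f'(0)=1$ and $f''\ge-\abs{k_g}$, with no singular point and no limit $\theta(0^+)$ to check. The $1$-Lipschitz property of $B$ then converts $f(\ell)-f(0)\ge \ell(1-A)$ into the distance bound.

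Your distance function gives $d_{\HH^2}(p,q)=\rho(q)$ directly. Via the signed-angle version $\frac{\dd}{\dd s}\abs{\theta}\le\abs{k_g}$, it also gives the slightly sharper factor $\cos A$. The price is handling the singularity of $\rho$ at the starting point.
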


\begin{proof}
    Choose arclength parameterization of $\gamma$. Let $p=\gamma(0), T=\gamma'(0)$. Choose a geodesic ray $\alpha$ such that $\alpha(0)=p,\quad \alpha'(0)=-T$. Define the Busemann function
    \[
        B(x)=\lim_{t\to +\infty} d_{\HH^2}\big(x,\alpha(t)\big)-t.
    \]
    Denote $\zeta=\alpha(+\infty) \in \partial_{\infty}\HH^2$ the positive end point of $\alpha$. As observed in \cite[\S 2.6(a), p.~743]{BessonCourtoisGallot1995}, the vector $-\nabla B(x)$ is the unit tangent vector at $x$ pointing toward
    $\zeta$. In particular, $\norm*{\nabla B}=1$.

    By the computation of the Hessian of the Busemann function in \cite[\S 5(b), p.~751]{BessonCourtoisGallot1995}, one has
    \[
        \nabla^2 B = g-dB\otimes dB.
    \]
    Cauchy-Schwarz inequality yields that
    \[
        \nabla^2B(X,X)=\norm*{X}^2-\inner{X}{\nabla B}^2\geq 0.
    \]
    Set $f(s)=B(\gamma(s))$. We have $f'(s)= \inner{\nabla B}{\gamma'}$. In particular, $f'(0)= \inner{\nabla B}{T} = 1$. The last equality is obtained by differentiating $B(\alpha(s))=-s$.

    We then compute
    \[
        f''(s)=\nabla^2B(\gamma',\gamma') + \inner{\nabla B}{\nabla_{\gamma'} \gamma'}.
    \]
    Thus
    \[
        f''(s) \geq -|k_g(\gamma(s))|.
    \]
    Integrating both sides, we obtain
    \[
        f'(s) \geq 1- \int_0^s |k_g(\gamma(t))| \dd t.
    \]
    A second integration gives
    \begin{align*}
        f(l)-f(0) &\geq l-\int_0^l \int_0^s |k_g(\gamma(t))| \dd t \dd s  \\
        &= l-\int _0^l(l-t) |k_g(\gamma(t))| \dd t \\
        &\geq l(1-A).
    \end{align*}
    Since $B$ is $1$-Lipschitz,
    \[
        d_{\HH^2}(\gamma(0),\gamma(l)) \geq f(l) - f(0) \geq l(1-A).
    \]
\end{proof}

\section{Local equicontinuity}\label{sec:equicontinuity}
 In this section, we derive local equicontinuity from a uniform energy bound. 

 \begin{proposition}\label{prop:equi}
     Let $\Sigma$ be a Riemann surface and let $\{u_s:\Sigma \rightarrow \mathbb{H}^2\}$ be a family of harmonic maps. If the energies $E(u_s,\Sigma)$ are uniformly bounded in $s$, then $\{u_s\}$ is locally equicontinuous. As a consequence, either $\{u_s\}$ has a subsequence that converges locally uniformly to a harmonic map $u:\Sigma \rightarrow \mathbb{H}^2$, or it has a subsequence that diverges locally uniformly to some $z \in \partial_{\infty}\mathbb{H}^2$. 
 \end{proposition}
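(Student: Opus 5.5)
The plan is to prove equicontinuity locally in conformal coordinates via the Courant--Lebesgue Lemma (\cref{thm:cl-lemma}) combined with the maximum principle for convex functions, and then to get the dichotomy from an Arzelà--Ascoli argument. Let $K$ be a uniform bound for $E(u_s,\Sigma)$. Energy is conformally invariant in dimension two, so in a coordinate chart $\Omega\subset\C$ around a point $p$ the energies of $u_s|_\Omega$ with respect to the flat metric are still bounded by $K$.

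Fix $p$ and $\varepsilon>0$. Pick $\delta\in(0,1)$ so small that $\overline{B(p,\sqrt\delta)}\subset\Omega$ and $\sqrt{8\pi K/\abs{\log\delta}}<\varepsilon$. For each $s$, \cref{thm:cl-lemma} gives a radius $\rho_s\in(\delta,\sqrt\delta)$ with $\diam u_s(\partial B(p,\rho_s))\le\varepsilon$. This circle lies in a closed geodesic ball $\overline{B}_{\HH^2}(y_s,\varepsilon)$ with $y_s=u_s(x)$ for some $x$ on the circle. Since $d_{\HH^2}(\cdot,y_s)$ is convex on $\HH^2$, its composition with the harmonic map $u_s$ is subharmonic. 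The maximum principle then gives $u_s(B(p,\rho_s))\subset \overline{B}_{\HH^2}(y_s,\varepsilon)$. In particular, for every $q$ with $\abs{q-p}<\delta$ we have $d_{\HH^2}(u_s(q),u_s(p))\le 2\varepsilon$, uniformly in $s$. This is equicontinuity at $p$. The same argument with $p$ replaced by nearby points gives local uniform equicontinuity on compact sets.

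For the dichotomy, fix a base point $x_0\in\Sigma$ and pass to a subsequence along which either $u_s(x_0)$ stays in a compact set or $u_s(x_0)\to z\in\partial_\infty\HH^2$ in the closed disk. Because $\Sigma$ is connected, chaining the local equicontinuity along paths shows that $d_{\HH^2}(u_s(x),u_s(x_0))$ is bounded uniformly in $s$ on each compact set.

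In the first case the family is locally uniformly bounded. Arzelà--Ascoli and a diagonal argument over an exhaustion then give a locally uniformly convergent subsequence. The limit $u$ is harmonic by standard regularity for harmonic maps with small image: uniform $C^0$ bounds plus Cheng's gradient estimate (\cref{thm:gradient}) give uniform $C^1$ bounds, Schauder estimates for the harmonic map system upgrade these to $C^{2,\alpha}$ bounds, and the tension equation passes to the limit.

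In the second case, a set at bounded hyperbolic distance from $u_s(x_0)\to z$ has Euclidean diameter tending to $0$ and converges to $z$. Hence $u_s\to z$ locally uniformly in the disk model.

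The main obstacle I expect is the regularity and compactness step in the first case, since it needs uniform higher-derivative bounds rather than just $C^0$ control. I would handle it with Cheng's estimate applied on small balls whose images lie in a fixed hyperbolic ball, followed by elliptic bootstrapping.
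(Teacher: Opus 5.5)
Your proof is correct and follows essentially the same route as the paper. Courant--Lebesgue gives, for each $s$, a circle of radius in $(\delta,\sqrt{\delta})$ with small image; subharmonicity of the distance function pushes this control into the disk; and Arzelà--Ascoli gives the dichotomy, in the hyperbolic metric when the images stay bounded at one point and in the Euclidean disk model otherwise. The differences are minor: you get a $2\varepsilon$ bound from a geodesic ball around one boundary value rather than the paper's diameter argument, you organize the dichotomy around a fixed base point, and you explicitly justify harmonicity of the limit via Cheng's estimate and Schauder bootstrapping, a step the paper leaves implicit.
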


 \begin{proof}
    For any harmonic map $f: \Sigma \rightarrow \mathbb{H}^2$ and any fixed point $z_0 \in \mathbb{H}^2$, the function
    \[        
        \rho_{f,z_0}(z) = d_{\HH^2}\big(z_0,f(z)\big)
    \]
    is subharmonic. Suppose that the energy is bounded by a positive number $K$. For any $\varepsilon>0$, choose $0<\delta<1$ sufficiently small so that 
$\sqrt{\dfrac{8 \pi K}{|\log \delta|}} < \varepsilon.$ The following argument is performed in a conformal coordinate disk $U$. All domain distances and balls below are Euclidean in the coordinate. 

By \cref{thm:cl-lemma}, for a compact subset $V$ of the coordinate disk $U$, choose $\delta>0$ such that the closure of $B(x,\sqrt{\delta})$ is contained in $U$ for any $x \in V$. For any index $s$, any $x \in V$,  there exists $\xi_{x,s} \in (\delta,\sqrt{\delta})$ such that, for any $p,q \in L_{\xi_{x,s}}:=\{w \in U | d(w,x)=\xi_{x,s} \}$, the hyperbolic distance between $u_s(p)$ and $u_s(q)$ satisfies
    \[
        d_{\mathbb{H}^2}\big(u_s(p),u_s(q)\big) \leq \sqrt{\dfrac{8 \pi K}{|\log \delta|}} < \varepsilon.
    \]       
    Let $\overline{B(x,\xi_{x,s})}$ be the compact ball centered at $x$ with radius $\xi_{x,s}$. Choose $z_1,z_2 \in u_s\big(\overline{B(x,\xi_{x,s})}\big)$ such that
    \[
        d_{\mathbb{H}^2}(z_1,z_2)=\diam\, u_s\big(\overline{B(x,\xi_{x,s})}\big).
    \]
    By the subharmonicity of the function $\rho_{u_s,z_2}$ established above, we can choose the maximizing point $z_1 \in u_s(L_{\xi_{x,s}})$. The same holds for $z_2$.

    In particular, as long as $d(x,w)<\delta$, we find
    \[
        d_{\mathbb{H}^2}\big(u_s(x),u_s(w)\big)\leq \diam\, u_s\big(B(x,\xi_{x,s})\big) \leq \varepsilon.
    \]
    This shows the local equicontinuity of the family $\{u_s\}$.

    Suppose that there exist a subsequence $\{u_{s_j}\}$ and a point $x \in \Sigma$ such that $\{d_{\mathbb{H}^2}(u_{s_j}(x),0)\}$ is bounded in $j$. Let 
    \[
        x\in\Omega_1 \subset \Omega_2 \subset \dots
    \]
    be a compact exhaustion of $\Sigma$. For each fixed $i$, the equicontinuity of $\{u_{s_j}\}$ implies that $u_{s_j}(\Omega_i)$ is uniformly bounded in $j$. Therefore, the Arzelà--Ascoli theorem implies that $\{u_{s_j}|_{\Omega_i}\}_{j=1,2,...}$ is precompact.

    It remains to consider the case in which $d_{\mathbb{H}^2}(u_s(x),0) \rightarrow \infty$ for every $x \in \Sigma$. If we regard the $u_s$ as maps into $\mathbb{R}^2$, they are automatically uniformly bounded and equicontinuous with respect to the Euclidean metric. Thus, there is a subsequence $\{u_{s_j}\}$ that converges locally uniformly, in the Euclidean sense, to a map $u$. We have $u(\Sigma) \subset \partial _{\infty} \mathbb{H}^2$, and hyperbolic equicontinuity again implies that $u(\Sigma)$ consists of a single point. This completes the proof.
\end{proof}

\section{Distance between parachute maps and Scherk maps} \label{sec:boundedness}

Let $\Phi=\varphi(z)\dd z^2$ be a polynomial quadratic differential with $\deg \varphi =k-2 \geq 0$. Assume $\Phi$ has real coefficients, even degree and negative leading coefficient. Let $\omega:\mathbb{C} \rightarrow \mathbb{H}^2$ be the normalized Scherk map associated with $\Phi$. Let $\Omega_{1,s}$ be the annulus $\{1<|z|<s\} \subset \mathbb{C}$, and let $\gamma_r$ be the circle $\{|z|=r\}$.

Consider the following mixed Dirichlet–Neumann boundary value problem for harmonic maps:
\begin{equation*}
\left\{
\begin{aligned}
    &u^s:\Omega_{1,s} \rightarrow \mathbb{H}^2,\\
    &u^s|_{\gamma_s}=\omega|_{\gamma_s},\\
    &\dd u^s(\mathbf{n})|_{\gamma_1}=0.
\end{aligned}
\right.
\end{equation*}
Here $\mathbf{n}$ is the outer unit normal vector, that is, $\mathbf{n}=-\partial/\partial r$ on $\gamma_1$. The solution $u_s$ is called the parachute map. The name comes from \cite{Huang2016Harmonic}. This boundary-value problem has a unique solution. In fact, $u_s$ is the restriction of the unique solution to the following Dirichlet problem for harmonic maps:
\begin{equation*}
\left\{
\begin{aligned}
    &f^s:\Omega_{\frac{1}{s},s} \rightarrow \mathbb{H}^2,\\
    &f^s|_{\gamma_s}(z)=\omega|_{\gamma_s}(z),\\
    &f^s|_{\gamma_{{1}/{s}}}(z)=\omega|_{\gamma_{s}}({1}/{\bar{z}}).
\end{aligned}
\right.
\end{equation*}
In addition, $f^s$ is precisely the harmonic map obtained from $u^s$ by reflection across $\gamma_1$ via the Schwarz reflection principle. By a slight abuse of notation, we also write $u^s$ for $f^s$. We emphasize that $u^s$ minimizes the energy among maps in $\{ v \in W^{1,2}(\Omega_{1,s},\HH^2), v|_{\gamma_s}=\omega|_{\gamma_s} \}$. The reflection symmetry across $\R$ of $\omega$ induces the same symmetry of $u^s$, i.e. $\overline{u^s(z)}=u^s(\bar{z})$.

\begin{proposition}\label{prop:usbound}
    There is a constant $M>0$ such that for all $s$ and all $a\in (-s,-1/s)\cup(1/s,s)$, 
    \[
        d_{\HH^2}\big(0,u^s(a)\big) \leq M.
    \]
\end{proposition}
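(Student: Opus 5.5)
The plan is to combine the reflection symmetry of $u^s$ with a convex hull (maximum principle) argument. Together these show that $u^s$ maps the real segments $(-s,-1/s)\cup(1/s,s)$ into a fixed compact subset of $\HH^2$, independent of $s$. The compact set will be the intersection of the closed ideal polygon $\overline{\PP}$ with the real diameter of the disk, and the hypothesis that $\Phi$ has negative leading coefficient enters through \cref{prop:criterion-for-pm1}.

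\emph{Step 1: the real axis goes to the real diameter.} As noted just before the statement, $\overline{u^s(z)}=u^s(\bar z)$ on $\Omega_{1/s,s}$. For real $a$ this gives $u^s(a)=\overline{u^s(a)}$, so $u^s(a)\in(-1,1)$, the fixed geodesic of complex conjugation in the disk model.

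\emph{Step 2: the image of $u^s$ lies in $\overline{\PP}$.} Let $\overline{\PP}\subset\HH^2$ be the closed ideal polygon, the closure in $\HH^2$ of $\omega(\C)$. It is geodesically convex, and in the degenerate case of \cref{rem:normalization} it is a complete geodesic. On both boundary circles the data take values in $\omega(\C)\subset\overline{\PP}$:
\begin{itemize}
\item on $\gamma_s$ the data is $\omega|_{\gamma_s}$;
\item on $\gamma_{1/s}$ the data is the reflected copy of $\omega|_{\gamma_s}$.
\end{itemize}
Since $\overline{\PP}$ is a closed convex subset of a Hadamard manifold, the function $x\mapsto d_{\HH^2}(x,\overline{\PP})$ is convex. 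Hence $d_{\HH^2}(u^s(\cdot),\overline{\PP})$ is subharmonic on the compact annulus $\overline{\Omega_{1/s,s}}$, continuous up to the boundary, and vanishes on $\partial\Omega_{1/s,s}$. By the maximum principle it vanishes identically, so $u^s(\Omega_{1/s,s})\subset\overline{\PP}$. Because $u^s$ is smooth up to the boundary, no regularity issue arises.

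\emph{Step 3: the intersection with the diameter is compact.} By \cref{prop:criterion-for-pm1}, neither $1$ nor $-1$ is an ideal vertex of $\PP$. The closure of $\overline{\PP}$ in the closed disk meets $\partial_\infty\HH^2$ only at the ideal vertices, and the closure of the real diameter meets $\partial_\infty\HH^2$ only at $\pm1$. Therefore $\overline{\PP}\cap(-1,1)$ is a closed geodesic segment $[\alpha,\beta]$ with $-1<\alpha\le\beta<1$; in the degenerate $2$-gon case it is a single point. This segment is determined by $\omega$ alone and does not depend on $s$. Setting
\[
M=\max\{d_{\HH^2}(0,\alpha),\,d_{\HH^2}(0,\beta)\},
\]
Steps 1 and 2 give $d_{\HH^2}(0,u^s(a))\le M$ for all $s$ and all real $a$ with $1/s<|a|<s$.

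The only step needing care is Step 2. One must check that the boundary data on the inner circle $\gamma_{1/s}$ also lie in $\omega(\C)$; they do, since they are reflected values of $\omega|_{\gamma_s}$. One must also justify subharmonicity of the distance to a convex set. This is standard: convexity of $d(\cdot,\overline{\PP})$ on $\HH^2$, composed with a harmonic map, gives a subharmonic function. If one wants to avoid the distance to a set, an alternative is to apply the maximum principle to each side separately. For each side of $\PP$, compose $u^s$ with the signed distance to that side's geodesic, taken positive on the side away from $\PP$. The positive part of this function is subharmonic and vanishes on the boundary, so $u^s$ stays on the correct side of every edge.
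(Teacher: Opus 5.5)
Your proposal is correct and follows the paper's proof. In both, reflection symmetry puts $u^s(a)$ on the real diameter, the image of $u^s$ lies in the polygon, and $\overline{\PP}\cap(-1,1)$ is a bounded segment because $\pm1$ are not ideal vertices; in the constant case $k=2$, which the paper treats separately, this last fact comes from the orthogonality normalization of \cref{rem:normalization}, since \cref{prop:criterion-for-pm1} is stated only for nonconstant $\Phi$. Your Step 2 also supplies the maximum-principle justification, via convexity of the distance to $\overline{\PP}$, for the containment $u^s(\Omega_{1/s,s})\subset\overline{\PP}$ that the paper simply asserts.
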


\begin{proof}
    The case $k=2$ is trivial by the normalization stated in \cref{rem:normalization}. Assume now $k>2$. Due to the reflection symmetry across $\R$, we have $u^s(a) \in \R$. The image of $u^s$ is contained in the polygon $\omega(\C)$. The length of the segment $\omega(\C) \cap (-1,1)$ is finite, since neither $1$ nor $-1$ is an ideal vertex of $\omega(\C)$. Recall that $0=\omega(0) \in \omega(\C) \cap (-1,1)$.
    Thus
    \[
        d_{\HH^2}\big(u^s(a),0\big) \leq l_{\HH^2} \big(\omega(\C) \cap (-1,1)\big) = M <+\infty.
    \]
\end{proof}

\begin{proposition}\label{prop:usomegabound}
    Let $f^s(z)=d_{\HH^2}\big(u^{s}(z),\omega(z)\big)$. Then there is a constant $B>0$ independent of $s$ such that $f^s\leq B$ on $\Omega_{1,s}$.
\end{proposition}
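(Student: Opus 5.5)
The plan is to use that $f^s=d_{\HH^2}(u^s,\omega)$ is subharmonic on $\Omega_{1,s}$, because both maps are harmonic into the nonpositively curved target $\HH^2$. The maximum principle then reduces the uniform bound to the boundary. On $\gamma_s$ the two maps agree, so $f^s=0$ there. Hence $\sup_{\Omega_{1,s}} f^s = \max\{0,\sup_{\gamma_1} f^s\}$, and it suffices to bound $f^s$ on the unit circle $\gamma_1$ uniformly in $s$. Since $\omega$ is a fixed continuous map, $\omega(\gamma_1)$ lies in a fixed compact set. It is therefore enough to show that $u^s(\gamma_1)$ stays in a fixed hyperbolic ball around $0$, independently of $s$.

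To bound $u^s$ on $\gamma_1$, I would use the reflected map on $\Omega_{1/s,s}$ and the two points $\pm1\in\gamma_1\cap\R$. By \cref{prop:usbound}, $d_{\HH^2}(0,u^s(\pm1))\le M$. I then need to transport this to the whole circle $\gamma_1$, and the tool is equicontinuity from an energy bound on a fixed neighbourhood of $\gamma_1$, say the annulus $\Omega_{1/2,2}$.

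\textbf{The main obstacle is a uniform energy bound for $u^s$ on a fixed compact annulus near $\gamma_1$.} The total energy of $u^s$ on $\Omega_{1,s}$ grows with $s$, so a global bound is not available. My first attempt is a local replacement argument exploiting energy minimality. Compare $u^s$ on $\Omega_{1,s}$ with the competitor equal to $\omega$ outside $|z|\le 2$ and, on $\Omega_{1,2}$, equal to $\omega$ composed with a retraction that keeps everything bounded. This gives
\[
E(u^s,\Omega_{1,s})\le E(\omega,\Omega_{2,s})+C,
\]
but it does not directly localize. Instead, I would use the Courant--Lebesgue lemma (\cref{thm:cl-lemma}) in the annular coordinate $\log z$, where circles $\gamma_r$ become horizontal segments, together with the isoenergy inequality (\cref{thm:isoenergy}). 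The aim is a bound of the form $E(u^s,\Omega_{1,R})\le C_R$ independent of $s$.

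Should that fail, the alternative is to argue by contradiction. Suppose $\sup_{\gamma_1} d(0,u^s)\to\infty$ along a sequence. Then rescale and use \cref{thm:gradient} (Cheng's estimate). Cheng's estimate says that on a ball where the image stays in a hyperbolic ball of radius $b$, the gradient is controlled. Combined with the bound at the real points $\pm1$, a continuity argument along $\gamma_1$ in the $\log z$ coordinate shows that the image of a fixed neighbourhood of $\pm1$ stays bounded. The two arcs of $\gamma_1$, upper and lower, are exchanged by the symmetry $\overline{u^s(z)}=u^s(\bar z)$, so it suffices to handle the upper arc. The upper arc is covered by finitely many Cheng balls, and the continuity argument is propagated ball to ball.

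The step that propagation actually requires is a bound on the image of the whole ball, not just its centre. For that I would use subharmonicity of $d(\omega(\cdot),u^s)$ together with the fact that $u^s$ maps into the closure of the polygon $\omega(\C)$, whose vertices avoid $\pm1$. This is where the hypothesis of negative leading coefficient, via \cref{prop:criterion-for-pm1}, enters. Once $u^s(\gamma_1)$ is uniformly bounded, the maximum principle gives $f^s\le B$ with $B=\sup_s\sup_{\gamma_1} f^s<\infty$.
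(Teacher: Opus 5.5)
Your reduction to $\gamma_1$ via the maximum principle is correct. The gap is that the proposal never produces an $s$-independent bound on $\gamma_1$, and each route you sketch is either circular or does not work.

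\emph{Energy route.} A uniform energy bound on a fixed annulus near $\gamma_1$ cannot come from \cref{thm:cl-lemma}, which takes an energy bound as input. Nor can it come from \cref{thm:isoenergy}, which needs control of the tangential energy on some circle, and that is equally unavailable. In the paper, local energy and gradient control of $u^s$ near $\gamma_1$ (\cref{prop:energydif}, \cref{rem:gradient}) is obtained from \cref{coro:usboundoncircle} via \cref{thm:gradient}, i.e.\ from the very statement you are proving.

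\emph{Fallback route.} \cref{thm:gradient} requires the image of the whole ball to lie in a fixed ball, and a bound at one point gives no such control. For instance, the harmonic map $z\mapsto\gamma(N\re z)$, with $\gamma$ a unit-speed geodesic, takes the value $\gamma(0)$ at $0$ but sends the unit disc onto a segment of length $2N$. So nothing propagates from $\pm1$ without a modulus of continuity uniform in $s$, which is again the missing energy bound. Your ``rescale'' step also names no limiting object that would yield a contradiction. Subharmonicity of $d_{\HH^2}(\omega,u^s)$ only sends you back to its values on $\gamma_1$. Containment in the polygon controls only the points $u^s(z)$ with $z$ real (that is \cref{prop:usbound}); away from the real axis the polygon is unbounded near its ideal vertices.

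The idea you are missing is to use the Neumann condition $\dd u^s(\mathbf n)=0$ on $\gamma_1$. It controls the \emph{normal derivative} of $f^s$ there rather than its values. You should also use the whole real segments of the upper half-annulus $A=\{1<\abs{z}<s,\ 0<\arg z<\pi\}$, not just $\pm1$, as the part of the boundary where values are known.

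\begin{enumerate}
\item Let $l$ be the unit-speed geodesic of length $d=f^s(\e^{\ii\theta})>0$ from $u^s(\e^{\ii\theta})$ to $\omega(\e^{\ii\theta})$. The first variation formula together with the Neumann condition gives $\partial_r f^s=-\inner{l'(d)}{\dd\omega(\mathbf n)}$. Hence $\abs{\partial_r f^s}\le K:=\max_{\gamma_1}\norm{\dd\omega(\mathbf n)}$, independently of $s$.
\item On $\partial A\cap\R$ you have $f^s\le M$ by \cref{prop:usbound} and the boundedness of $\omega(\R)$. On $\gamma_s$ you have $f^s=0$.
\item Compare the subharmonic function $f^s$ with the harmonic barrier $g=M+4K\re(\e^{\ii\pi/4}z^{-1/2})$. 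It satisfies $g\ge M$ on $\overline A$ and $\dd g(\mathbf n)\ge\sqrt2K$ on the upper arc of $\gamma_1$.
\item A positive maximum of $f^s-g$ cannot be interior, and cannot lie on $\partial A\cap\R$ or $\gamma_s$. It cannot lie on $\gamma_1$ either, since there its radial derivative into $A$ would be at least $(\sqrt2-1)K>0$.
\end{enumerate}

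Hence $f^s\le M+4K$ on $A$, and the reflection symmetry handles the lower half. This is where \cref{prop:criterion-for-pm1} really enters: through the bound on the entire real segments, which serve as the Dirichlet part of the boundary in this Hopf-type comparison.
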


\begin{proof}
    Let $A=\{1<|z|<s, 0<\aarg(z)<\pi\}$ be the upper annulus. By \cref{prop:usbound} and the fact that $\omega(\R)$ is bounded, we have $f^s\leq M$ on $\partial A \cap \R$ for a constant $M>0$ independent of $s$. The boundary value gives $f^s=0$ on $\gamma_s=\{|z|=s\}$. Thus we need only to prove a uniform bound on $\gamma_1$.

    Assume $f^s\left(\e^{\ii\theta}\right)>0$ for some $\theta \in \R$. Let $l:[0,d] \to \HH^2$ be the unit speed geodesic connecting $u^s(\e^{\ii\theta})$ and $\omega\left(\e^{\ii\theta}\right)$. Let $z(t)= t \e^{\ii \theta}$. Then we have
    \begin{equation}\label{eq:fsnormal}
        \frac{\dd}{\dd t} f^s\big(z(t)\big)\Big|_{t=1} = -\inner{l'(0)}{\dd u^s(-\mathbf{n})} + \inner{l'(d)}{\dd \omega(-\mathbf{n})} = - \inner{l'(d)}{\dd \omega(\mathbf{n})}.
    \end{equation}
    Recall that $\mathbf{n}$ is the outer unit normal vector. Let $K$ be the maximum of $\norm*{\dd \omega(\mathbf{n})}$ on $\gamma_1$, then $\norm *{\dd f^s(\mathbf{n})}(\e^{\ii\theta}) \leq K$. 

    Let $g(z) = M+4K\re\left(\e^{\ii \pi/4}z^{-1/2}\right)$ be defined on a neighborhood of the closure of $A$. Here the branch of the square root is chosen so that it maps a positive real number to a positive real number. We have 
    \begin{equation}\label{eq:gnormal}
        \dd g(\mathbf{n})\left(\e ^{\ii\theta}\right)= 2K \cos \frac{\theta-\pi/2}{2} \geq \sqrt{2}K.
    \end{equation}
    Note that $g$ is positive on $\partial{A}$ and is no less than $M$ on $\partial A \cap \R$. Thus the subharmonic function $f^s-g$ is non-positive on $\partial A \cap \R$ and $\partial A \cap\gamma_s$. Assume that it attains a positive maximum at $\e^{\ii \theta_0}$. Let $z(t) = t \e ^{\ii \theta_0}$. We have $f^s(\e ^{\ii \theta_0})>0$ and
    \[
        \frac{\dd}{\dd t} \Big(f^s\big(z(t)\big)-g\big(z(t)\big)\Big)\Bigg|_{t=1} \leq 0.
    \]
    But by \eqref{eq:fsnormal} and \eqref{eq:gnormal}, the left side is
    \[
        - \inner{l'(d)}{\dd \omega(\mathbf{n})} + \dd g(\mathbf{n}) \geq (\sqrt{2}-1)K > 0,
    \]
    a contradiction. Thus $f^s \leq g \leq M+4K$ on $\partial A \cap\gamma_1$. This  together with the reflection symmetry of $u^s$ and $\omega$ across $\R$ give the desired bound $B=M+4K$.
\end{proof}

\begin{corollary}\label{coro:usboundoncircle}
    Fix $s_0>1$. There exists a constant $M>0$ such that 
    \[
        \sup_{ z \in \Omega_{1,s_0}} d_{\HH^2}\big(u^s(z),0\big) \leq M
    \]
    for all $s>2s_0$. Note that the inequality still holds if the domain of the supremum is replaced by $\Omega_{1/s_0,s_0}$.
\end{corollary}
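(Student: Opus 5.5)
Since \cref{prop:usomegabound} already controls $u^s$ by the fixed map $\omega$ on the whole annulus $\Omega_{1,s}$, the corollary follows from the triangle inequality. For $z\in\Omega_{1,s_0}$ and $s>2s_0$, we have $\Omega_{1,s_0}\subset\Omega_{1,s}$, so
\[
    d_{\HH^2}\big(u^s(z),0\big)\leq d_{\HH^2}\big(u^s(z),\omega(z)\big)+d_{\HH^2}\big(\omega(z),0\big)\leq B+\sup_{|z|\leq s_0} d_{\HH^2}\big(\omega(z),0\big).
\]
Here $B$ is the constant of \cref{prop:usomegabound}, which does not depend on $s$. The last supremum is finite because $\omega:\C\to\HH^2$ is continuous and $\{|z|\leq s_0\}$ is compact. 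We may therefore take
\[
    M=B+\max_{|z|\leq s_0} d_{\HH^2}\big(\omega(z),0\big),
\]
which depends only on $s_0$ and $\Phi$.

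For the domain $\Omega_{1/s_0,s_0}$, recall that $u^s$ on $\Omega_{1/s,s}$ is defined by Schwarz reflection across $\gamma_1$, so $u^s(z)=u^s(1/\bar z)$. If $1/s_0<|z|<1$, then $1/\bar z\in\Omega_{1,s_0}$, and the bound just proved applies to $u^s(1/\bar z)=u^s(z)$. On $\gamma_1$ itself the bound follows by continuity, or directly, since \cref{prop:usomegabound} holds up to $\gamma_1$ (its proof bounds $f^s$ on $\gamma_1$).

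The only possible subtlety is that \cref{prop:usomegabound} must hold on the closed region $\{1\leq|z|\leq s_0\}$ with $B$ uniform in $s$. Its maximum-principle argument on $A$ gives exactly this, so I expect no real obstacle. The hypothesis $s>2s_0$ serves only to ensure $\Omega_{1,s_0}$ lies well inside the domain of $u^s$.
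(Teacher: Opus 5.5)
Your proof is correct and is essentially the paper's argument: you combine the triangle inequality, the $s$-uniform bound of \cref{prop:usomegabound}, and the boundedness of $\omega$ on the compact set $\{|z|\le s_0\}$. Your reflection identity $u^s(z)=u^s(1/\bar z)$ also correctly supplies the justification, which the paper leaves implicit, for extending the bound to $\Omega_{1/s_0,s_0}$.
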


\begin{proof}
    The triangle inequality
    \[
        d_{\HH^2}\big(u^s(z),0\big) \leq d_{\HH^2}\big(u^s(z),\omega(z)\big) + d_{\HH^2}\big(\omega(z),0\big)
    \]
    and \cref{prop:usomegabound} gives the desired result.
\end{proof}

We now show that the energy difference between the parachute maps and the Scherk map is bounded.

\begin{proposition}\label{prop:energydif}
    There is a constant $F>0$ such that for all $s>2s_0$,
    \[
        E(\omega,\Omega_{1,s})-F \leq E(u^s,\Omega_{1,s}) \leq  E(\omega,\Omega_{1,s}).
    \]
\end{proposition}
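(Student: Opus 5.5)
The upper bound is immediate from minimality. On $\Omega_{1,s}$ the parachute map $u^s$ minimizes energy among $W^{1,2}$ maps with trace $\omega|_{\gamma_s}$ on the outer circle and no constraint on $\gamma_1$; the Neumann condition is exactly the natural boundary condition for this free problem. Since $\omega|_{\Omega_{1,s}}$ is an admissible competitor, we get $E(u^s,\Omega_{1,s})\le E(\omega,\Omega_{1,s})$.

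For the lower bound I would compare $\omega$ with a modified competitor, using the minimality of $\omega$ on a disk. First, $\omega$ is harmonic into a nonpositively curved target, so it is the unique energy minimizer on the disk $\{|z|<s\}$ among maps with its own boundary values on $\gamma_s$. Define $v^s$ on $\{|z|<s\}$ by $v^s=u^s$ on $\Omega_{1,s}$, and on the unit disk $\{|z|\le 1\}$ let $v^s$ be the harmonic extension $w^s$ of $u^s|_{\gamma_1}$. This map is $W^{1,2}$ and agrees with $\omega$ on $\gamma_s$. Minimality of $\omega$ then gives
\[
E(\omega,\Omega_{1,s})+E(\omega,\{|z|<1\})\le E(u^s,\Omega_{1,s})+E(w^s,\{|z|<1\}).
\]
The second term on the left is a constant, so it can be dropped or absorbed into $F$. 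It therefore remains to bound $E(w^s,\{|z|<1\})$ uniformly in $s>2s_0$.

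By the isoenergy inequality (\cref{thm:isoenergy}) with $n=2$, $E(w^s,\{|z|<1\})\le E(u^s|_{\gamma_1},\gamma_1)$, which is the tangential energy of $u^s$ on the unit circle. This reduces the problem to a uniform $C^1$ bound for $u^s$ on $\gamma_1$, and I expect this to be the main step. To obtain it, I would use the reflected map $f^s$ on $\Omega_{1/s,s}$. By \cref{coro:usboundoncircle}, the image of $\Omega_{1/s_0,s_0}$ under $u^s$ lies in a fixed hyperbolic ball of radius $M$ about $0$, uniformly for $s>2s_0$. Apply Cheng's gradient estimate (\cref{thm:gradient}) on Euclidean balls of a fixed radius $a<\min(1-1/s_0,s_0-1)$ centred at points of $\gamma_1$ (flat domain, $K=0$), with a fixed base point $y_0$ outside the ball of radius $M$ and $b=$ $M+$ distance to $y_0$ $+1$. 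This yields $\norm{\nabla u^s}\le C$ on $\gamma_1$ with $C$ independent of $s$. Hence $E(u^s|_{\gamma_1},\gamma_1)\le \pi C^2$, and we may take $F=\pi C^2+E(\omega,\{|z|<1\})$.

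A point to check is that the isoenergy inequality is stated for smooth harmonic maps on the closed unit ball. The extension $w^s$ is smooth up to the boundary because its boundary data $u^s|_{\gamma_1}$ is smooth, by the regularity of \cite{HildebrandtKaulWidman1977}, so the inequality applies.
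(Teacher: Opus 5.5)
Your proposal is correct and follows essentially the same route as the paper. You glue the harmonic extension of $u^s|_{\gamma_1}$ into the unit disk and compare with $\omega$ via its minimality on $\{|z|<s\}$, then bound the extension's energy by \cref{thm:isoenergy} together with a uniform gradient bound on $\gamma_1$ obtained from \cref{thm:gradient} and \cref{coro:usboundoncircle}; your choices of $a$, $y_0$, $b$ and the smoothness remark supply details the paper leaves implicit.
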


\begin{proof}
    The second inequality is immediate. Let $p_s$ be the unique solution to the following Dirichlet problem for harmonic maps:
    \begin{equation*}
        \left\{
        \begin{aligned}
            &p^s:B(0,1)\rightarrow \HH^2\\
            &p^s|_{\gamma_1}=u^s|_{\gamma_1}.
        \end{aligned} 
        \right.
    \end{equation*}
    Let $g^s$ be a continuous map from $\overline{B(0,1)} \cup \Omega_{1,s}$ to $\HH^2$ given by
    \begin{equation*}
        \left\{
        \begin{aligned}
            &g^s|_{B(0,1)}=p^s,\\
            &g^s|_{\Omega_{1,s}}=u^s.
    \end{aligned}
        \right.
    \end{equation*}
    The energy of $g^s$ is no less than that of $\omega$ on $\overline{B(0,1)} \cup \Omega_{1,s}$, that is,
    \[
        E\big(\omega,B(0,1)\big)+E(\omega,\Omega_{1,s}) \leq E\big(p^s,B(0,1)\big) + E(u^s,\Omega_{1,s}). 
    \]
    Thus it suffices to show that $E\big(p^s,B(0,1)\big)\leq F$ for all $s$.

    Now fix $0<a<1$ such that for all $z \in \gamma_1$, we have $B(z,a) \subset \Omega_{1/s_0,s_0}$. Let $q_0$ in $\HH^2$ be a fixed point with $d_{\HH^2}(q_0,0)=M+1$. Then $d_{\HH^2}(q_0,u^s(z)) \leq 2M+1$ for all $z \in \Omega_{1/s_0,s_0}$ and all $s$. Here $M$ is the constant stated in \cref{coro:usboundoncircle}.
    Let $b=2M+2$. For any $z_0 \in \gamma_1$ and $w \in B(z_0,a)$, We find $b^2- d_{\HH^2}^2(u^s(w),q_0) \geq 1$. By \cref{thm:gradient}, we establish
    \begin{equation}\label{eq:gradient}
        \norm*{\nabla u^s(z_0)}\leq c\frac{(2M+2)^6}{a^2},
    \end{equation}
    where $c$ is a constant independent of $s$ and $z_0$.
    
    Denote $\overline{\nabla}$ the gradient on the circle $\gamma_1$. The boundary value of $p^s$ gives
    \[
        \norm*{\overline{\nabla}p^s}=\norm*{\overline{\nabla}u^s} \leq \norm*{\nabla u^s} .
    \]
    Using \cref{thm:isoenergy}, we show
    \[
        E\big(p^s,B(0,1)\big) \leq E(p^s|_{\gamma_1},\gamma_1) = \frac{1}{2} \int_{\gamma_1} \norm*{\overline{\nabla}p^s}^2 \dd l \leq  \frac{1}{2}\int_{\gamma_1}\norm*{\nabla u^s}^2\dd l \leq F
    \]
    for some absolute constant $F$, as desired.
\end{proof}

\begin{remark} \label{rem:gradient}
    The inequality \eqref{eq:gradient} shows that the gradient of parachute maps are uniformly bounded in $s$ on $\gamma_1$, which will be used later. 
\end{remark}

\section{Proof of the main theorem}\label{sec:existence}

Suppose $\Sigma=X\backslash D$. Recall that $X$ carries an antiholomorphic involution $\iota$ with nonempty fixed-point set. We assume $D \subset \Fix(\iota)$. Let $\sigma=\sigma(z)\abs*{\dd z}^2$ be the hyperbolic metric on $\HH^2$.

We first prove a generalization of the Isoenergy inequality.

\begin{lemma}\label{lem:generalizediso}
    Suppose $S$ is a compact Riemann surface with boundary, and the boundary of $S$ is the union of $n$ analytic Jordan curves $\partial S = \cup_{i=1}^n l_i$.
    Let $f^s$ be a family of harmonic maps from $S$ to $\mathbb{H}^2$. If $f^s(\partial S)$ is uniformly bounded and $E(f^s|_{\partial S},\partial S)$ is also bounded, then $E(f^s,S)$ is bounded. Here and below, boundedness is uniform in $s$.
\end{lemma}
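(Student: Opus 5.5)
The idea is to use the energy-minimizing property of harmonic maps into $\HH^2$: it suffices to build, for each $s$, a comparison map $v^s\colon S\to\HH^2$ with $v^s|_{\partial S}=f^s|_{\partial S}$ and $E(v^s,S)$ bounded independently of $s$.

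By the uniqueness statement recalled in \cref{subsec:harmonic-maps} (Korevaar--Schoen), each $f^s$ is the global minimizer of the Dirichlet energy among $W^{1,2}$ maps with the same trace. Any such $v^s$ therefore gives $E(f^s,S)\le E(v^s,S)$.

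To build $v^s$, I would use analyticity of the boundary curves. Each $l_i$ has a collar neighborhood $U_i\subset S$ that is biholomorphic to a closed annulus $\{r_0\le|z|\le 1\}$, with $l_i$ corresponding to $\{|z|=1\}$. Since energy is conformally invariant on surfaces, I may compute in the coordinate $z=\rho\e^{\ii\theta}$. Fix a base point $q_0\in\HH^2$; by hypothesis $f^s(\partial S)\subset B(q_0,R)$ for some $R$ independent of $s$. Let $\gamma^s_i(\theta)=f^s(\e^{\ii\theta})$. On $U_i$ define
\[
v^s(\rho\e^{\ii\theta})=\exp_{q_0}\big(\lambda(\rho)\exp_{q_0}^{-1}\gamma^s_i(\theta)\big),
\]
where $\lambda$ is a smooth cutoff with $\lambda(1)=1$ and $\lambda\equiv0$ near $\rho=r_0$. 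On the rest of $S$ set $v^s\equiv q_0$. This $v^s$ is Lipschitz, so it lies in $W^{1,2}$, and it has the correct trace.

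The energy estimate works as follows. On $\overline{B(q_0,R)}$ the map $\exp_{q_0}$ and its inverse have derivatives bounded by a constant $C(R)$; in polar coordinates the angular metric factor is $\sinh^2$, which stays bounded there. Hence:
\begin{itemize}
\item The angular derivative satisfies $\norm*{\partial_\theta v^s}\le C(R)\norm*{(\gamma^s_i)'(\theta)}$, since $\lambda\le1$ and scaling a tangent vector by $\lambda\in[0,1]$ along the radial retraction does not increase the angular stretch.
\item The radial derivative satisfies $\norm*{\partial_\rho v^s}\le |\lambda'(\rho)|\,R$.
\end{itemize}
Integrating over the annulus gives
\[
E(v^s,U_i)\le C\Big(\int_{l_i}\norm*{(\gamma^s_i)'}^2\dd\theta+R^2\Big)\le C'\big(E(f^s|_{\partial S},\partial S)+R^2\big).
\]
Here I use that the arclength on $l_i$ is comparable to $\dd\theta$ via the fixed analytic collar. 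Summing over the $n$ collars bounds $E(v^s,S)$ uniformly, and minimality then gives the claim.

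The main point needing care is the derivative bound for the radial scaling in $\HH^2$. The cleanest route is to view $\HH^2$ in the disk model with $q_0=0$. There the scaling $\exp_0(\lambda\exp_0^{-1}x)$ is just Euclidean scaling $x\mapsto t(\lambda)x$ along rays, which is explicit and smooth on compact subsets of the disk, so the constants depend only on $R$.

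An alternative would be to invoke \cref{thm:isoenergy} on each collar after uniformizing, but the direct comparison-map argument avoids needing a disk.
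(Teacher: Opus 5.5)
Your proposal is correct and is essentially the paper's argument. On conformal collars $\{\delta_i\le|z|\le1\}$ of the boundary curves, you build a comparison map by radially contracting the boundary curve to a fixed point and extending it by a constant; the radial term is bounded by the uniform bound on $f^s(\partial S)$, the angular term (weighted by $\int\rho^{-1}\dd\rho$) by the boundary energy, and the energy-minimizing property of $f^s$ concludes. The only cosmetic difference is the contraction: the paper uses Euclidean scaling $\lambda_i(r)b_i^s(\e^{\ii\theta})$ in the disk model with linear $\lambda_i$ and the bound $\sigma(\lambda b)\le\sigma(b)$, while you use geodesic contraction $\exp_{q_0}(\lambda\exp_{q_0}^{-1}\cdot)$, for which the angular bound even holds with constant $1$ since $\sinh$ is increasing.
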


\begin{proof}
    By assumption, there are constants $M,K>0$ such that $d_{\HH^2}\big(f^s(\partial S),0\big) \leq M$ and $E(f^s|_{\partial S},\partial S) \leq K$ for all $s$. Around each $l_i$, choose a tubular neighborhood $U_i$ that is conformally equivalent, via a coordinate map $\xi_i$, to an annulus $\{\delta_i \leq |z| \leq 1\}$, with $\xi_i(l_i)$ equal to the unit circle. Denote by $q_i$ the other boundary component of $U_i$. We identify $U_i$ with $\{\delta_i \leq |z|\leq 1\}$. This may let the energy bound $K$ changed by multiplying a constant $c>0$.

    Write $b^s_i=f^s \circ \xi_i^{-1}|_{\gamma_1}$. The assumed hyperbolic distance bound implies
    \[
        \abs*{b_i^s(e^{\ii\theta})}\leq m:=\tanh\left(\frac{M}{2}\right)<1
    \]
    for every $i$, $s$, and $\theta$. Define
    \[
        \lambda_i(r)=\frac{r-\delta_i}{1-\delta_i},
        \qquad \delta_i\leq r\leq1.
    \]
    Let 
    \[
        \phi^s_i\left(r \e^{\ii \theta}\right)=\lambda_i(r) b^s_i\left(\e^{\ii \theta}\right).
    \]
    Its derivatives are
    \[
        \frac{\partial\phi_i^s}{\partial r} = \frac{1}{1-\delta_i}b_i^s\left(\e^{i\theta}\right)
    \]
    and
    \[
        \frac{\partial\phi_i^s}{\partial\theta} = \lambda_i(r) \frac{\partial b_i^s}{\partial\theta}\left(\e^{i\theta}\right).
    \]
    Therefore,
    \begin{align*}
        E(\phi_i^s,U_i) &= \frac{1}{2} \int_{\delta_i}^1\int_0^{2\pi} \left( \left|
        \frac{\partial\phi_i^s}{\partial r} \right|^2 +  \frac{1}{r^2} \left| \frac{\partial\phi_i^s}{\partial\theta} \right|^2 \right) \sigma\left(\phi_i^s\right)
        \,r\dd\theta\dd r \\
        &= I_{i,1}^s+I_{i,2}^s,
    \end{align*}
    where
    \[
        I_{i,1}^s = \frac{1}{2} \int_{\delta_i}^1\int_0^{2\pi} \frac{|b_i^s|^2}{(1-\delta_i)^2} \sigma\big(\lambda_i(r)b_i^s\big) \,r\dd\theta\dd r
    \]
    and
    \[
        I_{i,2}^s = \frac{1}{2} \int_{\delta_i}^1\int_0^{2\pi} \frac{\lambda_i(r)^2}{r^2} \left| \frac{\partial b_i^s}{\partial\theta} \right|^2 \sigma\big(\lambda_i(r)b_i^s\big) \,r\dd\theta\dd r.
    \]
    We first estimate the radial term. Since
    \[
        \abs*{\lambda_i(r)b_i^s}\leq \abs*{b_i^s}\leq m,
    \]
    we have
    \[
        \sigma\big(\lambda_i(r)b_i^s\big) = \frac{4}{\left(1-\lambda_i(r)^2|b_i^s|^2\right)^2} \leq \frac{4}{(1-m^2)^2}.
    \]
    It follows that
    \begin{align*}
        I_{i,1}^s &\leq \frac{1}{2} \int_{\delta_i}^1\int_0^{2\pi} \frac{4m^2}{(1-\delta_i)^2(1-m^2)^2} \,r\,d\theta\,dr \\
        &= \frac{2\pi m^2}{(1-m^2)^2} \frac{1+\delta_i}{1-\delta_i}.
    \end{align*}
    For the angular term, observe that $0\leq\lambda_i(r)\leq1$ and hence
    \[
        \sigma\big(\lambda_i(r)b_i^s\big) \leq \sigma\big(b_i^s\big).
    \]
    Thus, 
    \begin{align*}
        I_{i,2}^s &\leq \frac{1}{2} \int_{\delta_i}^1 \frac{\lambda_i(r)^2}{r}\,\dd r
        \int_0^{2\pi} \sigma\big(b_i^s\big) \left| \frac{\partial b_i^s}{\partial\theta} \right|^2 \,\dd\theta  \\
        &\leq cK \int_{\delta_i}^1 \frac{\lambda_i(r)^2}{r}\,\dd r  \\
        &\leq cK\log\frac{1}{\delta_i}.
    \end{align*}
    We obtain
    \[
        E\big(\phi_i^s,U_i\big) \leq \frac{2\pi m^2}{(1-m^2)^2} \frac{1+\delta_i}{1-\delta_i} + cK\log\frac{1}{\delta_i}.
    \]
    Now define a comparison map $g^s:S\to\HH^2$ by
    \[
        g^s(z) = 
        \begin{cases}
            \phi_i^s(z),\quad z\in U_i,\\
            0,\quad\mathrm{otherwise}.
        \end{cases}
    \]
    Since $\phi_i^s=0$ on the inner boundary of $U_i$, this defines a
    continuous $W^{1,2}$ map. Moreover, $f^s$ and $g^s$ have the same boundary value. 
    By the energy minimizing property, we have
    \begin{align*}
        E(f^s,S) &\leq E(g^s,S)\\
        &=\sum_{i=1}^n E\big(\phi_i^s,U_i\big)\\
        &\leq \sum_{i=1}^n \left( \frac{2\pi m^2}{(1-m^2)^2} \frac{1+\delta_i}{1-\delta_i} + cK\log\frac{1}{\delta_i} \right).
    \end{align*}
    This proves the lemma.
\end{proof}

Let $k\geq 1$ be an integer. For $i=1,2,\dots,k$, let $P_i(z)$ be an even order nonzero polynomial with real coefficients and the leading coefficient is negative. Let
\[
    \Phi_i(z) = P_i(z) \dd z^2   
\]
be a holomorphic quadratic differential. Denote by $\omega_i$ the normalized Scherk map associated with $\Phi_i$, and by $u^s_{i}$ the family of parachute maps corresponding to $\omega_i$. Recall that 
\[
    \overline{\omega_i(z)}=\omega_i(\bar{z})   
\]
and the ideal boundary of the image of $\omega_i$ contains neither $1$ nor $-1$, by \cref{prop:reflection-symmetry-from-polynomial} and \cref{prop:criterion-for-pm1}.  

For each $i$, the sets $u^s_i (\{1\leq |z| \leq r\})$ are uniformly bounded in $s>9r$ and $i$, due to \cref{coro:usboundoncircle}. 

Around each $p_i$, we choose $E_i$ to be the punctured neighborhood of $p_i$ that is invariant under $\iota$. Denote the coordinate map by $\eta_i : E_i \rightarrow\C$. We assume
$\eta_i(E_i)=\{|z|>1\}$ and $\eta_i \circ \iota \circ (\eta_i)^{-1} (z) = \bar{z}$.

Define
\[
    \Sigma_s= \left( \Sigma \backslash \bigcup \limits _{i=1}^k (E_i)\right) \bigcup \limits_{i=1}^k (\eta_i)^{-1}(\Omega_{1,s}).
\]
Here $\Omega_{1,s}=\{1<|z|<s\}$. This gives an exhaustion $\{\Sigma_s\}$ of $\Sigma$. Denote by $\gamma_{i,s}$ the boundary component of $\Sigma_s$ lying in $E_i$. We use the letter $\gamma$ because, in the coordinate chart, this is precisely the circle $\gamma_s$ defined in \cref{sec:boundedness}.

Consider the following Dirichlet boundary value problem for harmonic maps:
\begin{equation}\label{eq:dirichlet}
    \left\{
    \begin{aligned}
        &h^s:\Sigma_s \rightarrow \mathbb{H}^2,\\
        &h^s|_{\gamma_{i,s}}= \omega_i \circ \eta_i \big|_{\gamma_{i,s}}.
    \end{aligned}
    \right.
\end{equation}

We may drop the terms $\eta_i$ unless it is necessary to specify that the planar domain is in the chart of $E_i$ but not other $E_j$s. The boundary value condition then becomes
\[
    h^s|_{\gamma_{i,s}}= \omega_i|_{\gamma_{i,s}}.
\]

\begin{proposition}\label{prop:energybound}
    For fixed $r>1$, the energy $E(h^s,\Sigma_r)$ is uniformly bounded for $s>2r$. 
\end{proposition}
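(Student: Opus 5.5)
The plan is to compare $h^s$ from above with an explicit competitor on $\Sigma_s$, and to compare the part of $h^s$ outside $\Sigma_r$ from below with parachute-type maps. The common reference quantity in both comparisons is $\sum_i E(\omega_i,\Omega_{1,s})$, which cancels.

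\emph{Upper bound.} I will build a competitor $g^s\colon\Sigma_s\to\HH^2$ in two pieces.
\begin{itemize}
\item On each end $\eta_i^{-1}(\Omega_{1,s})$, set $g^s=u_i^s$, the parachute map. It has the correct trace $\omega_i$ on $\gamma_{i,s}$.
\item On the compact core $S=\Sigma\setminus\bigcup_i E_i$, let $g^s$ be the harmonic map with boundary values $u_i^s|_{\gamma_1}$ on $\partial E_i$.
\end{itemize}
By \cref{coro:usboundoncircle}, $u_i^s(\gamma_1)$ is uniformly bounded. By \cref{rem:gradient}, $\norm{\nabla u_i^s}$ is uniformly bounded on $\gamma_1$, so $E(u_i^s|_{\gamma_1},\gamma_1)$ is uniformly bounded. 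Hence \cref{lem:generalizediso} gives $E(g^s,S)\le C_0$ independently of $s$. By the second inequality of \cref{prop:energydif} and the minimizing property of $h^s$,
\[
E(h^s,\Sigma_s)\le E(g^s,\Sigma_s)\le C_0+\sum_{i=1}^k E(\omega_i,\Omega_{1,s}).
\]

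\emph{Lower bound on the outer part.} On each $\eta_i^{-1}(\Omega_{r,s})$, the map $h^s$ is a competitor for the free-inner-boundary problem on $\Omega_{r,s}$ with Dirichlet data $\omega_i$ on $\gamma_s$. Let $v_i^s$ be the corresponding parachute map on $\Omega_{r,s}$, with Neumann condition on $\gamma_r$. Then $E(h^s,\eta_i^{-1}\Omega_{r,s})\ge E(v_i^s,\Omega_{r,s})$. I claim the analogue of \cref{prop:energydif}:
\[
E(v_i^s,\Omega_{r,s})\ge E(\omega_i,\Omega_{r,s})-F_r .
\]
The proof is the same as before. Fill in the disk $B(0,r)$ with the harmonic extension $p^s$ of $v_i^s|_{\gamma_r}$ and use the global minimality of $\omega_i$ on $B(0,s)$ with its own boundary values. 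It then suffices to bound $E(p^s,B(0,r))$, which follows from the isoenergy inequality once $v_i^s$ and $\nabla v_i^s$ are uniformly bounded on $\gamma_r$.

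To get those bounds, I will rescale: $z\mapsto rz$ turns $\omega_i$ into a Scherk map for $r^2P_i(rz)\,\dd z^2$. After postcomposition with a hyperbolic isometry commuting with conjugation, fixing the real axis and moving $\omega_i(r)$ to $0$, this is a Scherk map whose polynomial still has real coefficients, even degree and negative leading coefficient. Alternatively, one can simply rerun the arguments of \cref{prop:usbound}, \cref{prop:usomegabound} and \cref{coro:usboundoncircle} with $\gamma_1$ replaced by $\gamma_r$: they only use the reflection symmetry, the finiteness of $\omega_i(\C)\cap\R$, and the barrier argument, all of which are unchanged.

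\emph{Conclusion.} Subtracting the lower bound from the upper bound gives
\[
E(h^s,\Sigma_r)=E(h^s,\Sigma_s)-\sum_i E(h^s,\eta_i^{-1}\Omega_{r,s})\le C_0+\sum_i\big(E(\omega_i,\Omega_{1,r})+F_r\big),
\]
which is independent of $s>2r$.

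The step I expect to need the most care is the lower bound, i.e.\ transplanting \cref{prop:energydif} from the inner circle $\gamma_1$ to $\gamma_r$. Specifically, one must check that the constants in \cref{prop:usomegabound} and in the gradient estimate \eqref{eq:gradient} depend only on $r$ and $\omega_i$, and not on $s$. I would try the direct rerun of the barrier argument first, with the barrier $g$ adapted to the annulus $\{r<|z|<s\}$ by scaling $z\mapsto z/r$.
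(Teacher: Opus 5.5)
Your argument is correct, but it takes a more roundabout path than the paper.

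\textbf{The paper's route.} The paper uses a single competitor. On each outer annulus $\eta_i^{-1}(\Omega_{r,s})$ it places the parachute map at scale $r$, namely $v_i^{s/r}(z/r)$, where $v_i^{s}$ are the parachute maps of $\tilde\omega_i=\omega_i(r\,\cdot)$. On $\Sigma_r$ it places the harmonic fill-in $y^s$ of the resulting boundary values. This same map also minimizes energy on $\Omega_{r,s}$ among maps with trace $\omega_i$ on $\gamma_s$. So it simultaneously plays the role of your ``lower bound'' object, and the outer energies cancel exactly:
\[
E(h^s,\Sigma_r)\le E(y^s,\Sigma_r)+\sum_i\Big(E\big(v_i^{s/r}(\cdot/r),\Omega_{r,s}\big)-E\big(h^s,\eta_i^{-1}\Omega_{r,s}\big)\Big)\le E(y^s,\Sigma_r).
\]
After this, \cref{lem:generalizediso}, fed by \cref{coro:usboundoncircle} and \cref{rem:gradient} for $\tilde\omega_i$, finishes the proof.

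\textbf{Your route.} Your competitor uses the scale-$1$ parachute on $\Omega_{1,s}$, while your lower bound uses the scale-$r$ parachute on $\Omega_{r,s}$. These do not cancel against each other. You therefore have to pass through $E(\omega_i,\cdot)$ and invoke both inequalities of \cref{prop:energydif}, including the isoenergy lower bound transplanted to $\gamma_r$. That is exactly the step you flagged as delicate, and the paper never needs it here: it uses the scale-$r$ bounds only for the fill-in on $\Sigma_r$.

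\textbf{What each buys.} The paper's version is shorter and avoids \cref{prop:energydif} entirely. Your version gives in exchange a renormalized global estimate, $E(h^s,\Sigma_s)\le C_0+\sum_i E(\omega_i,\Omega_{1,s})$. It also gives a bound whose $r$-dependence is partly explicit, through $E(\omega_i,\Omega_{1,r})$.

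\textbf{A minor correction.} No postcomposition by an isometry is needed when you rescale. The map $\omega_i(r\,\cdot)$ sends $0$ to $0$ and has positive real derivative there, so it is already the normalized Scherk map of $r^2P_i(rz)\,\dd z^2$. Moving $\omega_i(r)$ to $0$ would actually break that normalization. This is harmless, because the arguments of \cref{sec:boundedness} only use the reflection symmetry and the fact that $\pm1$ are not vertices.
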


\begin{proof}
    Let $\tilde{\omega}_i(z)=\omega_i(rz) $ and $v^s_i$ the family of parachute maps correspond to $\tilde{\omega}_i$. The Hopf differential of $\tilde{\omega}_i$ is still a even order real polynomial with negative leading coefficient and thus $v^s_i(\{1 \leq |z| \leq r \})$ is uniformly bounded with respect to $s$, by \cref{coro:usboundoncircle}. 

    We now define a continuous map $g^s$ on $\Sigma_s$ with the same boundary values as $h^s$:
    \[
        g^s(z)=\begin{cases}
            v^{{s}/{r}}({z}/{r}), z \in E_i \cap (\Sigma_s\backslash \Sigma_r),\\
            y^s(z), z \in \Sigma_r.
        \end{cases}
    \]
    Here $y^s$ is the harmonic map from $\Sigma_r$ to $\mathbb{H}^2$ with the boundary value 
    \[
        y^s|_{\partial \Sigma_r \cap E_i} (z)= v^{{s}/{r}}_i({z}/{r}).
    \]
    This is indeed the value of $v^{{s}/{r}}$ on $\gamma_1$ composing a scaling. 

    As stated in \cref{rem:gradient}, the gradient $\nabla v^s_i$ is uniformly bounded on $\gamma_1$. Thus we have $E(y^s|_{\partial \Sigma_r \cap E_i},\partial \Sigma_r \cap E_i)$ is bounded. The boundedness of $\{v^s_i(\gamma_1)\}$ is established at the beginning of the proof. By \cref{lem:generalizediso}, we have $E(y^s,\Sigma_r)$ is bounded from above by a constant independent of $s$.

    Since $h^s$ minimizes the energy, we have
    \begin{align*}
        E\big(h^s,\Sigma_r\big) &\leq E\big(g^s,\Sigma_s\big)-\sum_{i=1}^k E\big(h^s, E_i \cap  \Sigma_s \backslash \Sigma_r  \big)\\
        &= E\big(y^s,\Sigma_r\big) +\sum_{i=1}^k E\left(v^{{s}/{r}}({\cdot}/{r}),E_i \cap \left( \Sigma_s \backslash \Sigma_r \right) \right)- E\big(h^s, E_i \cap  \Sigma_s \backslash \Sigma_r  \big)\\
        &\leq E\big(y^s,\Sigma_r\big).
    \end{align*}
    The last inequality follows from the energy-minimizing property of parachute maps stated in \cref{sec:boundedness}. This completes the proof.
\end{proof}

To prove that $\{h^s\}$ has a convergent subsequence, it suffices to show that $\{h^s(a)\}$ is bounded for some $a\in \Sigma$. This is established by the following proposition.

\begin{proposition}\label{prop:onepointbound}
    Fix $a\in \Fix(\iota)\backslash D$. There exists a constant $M>0$ such that, for all sufficiently large $s$,
    \[
        d_{\HH}\big(0,h^s(a)\big) \leq M.
    \]
\end{proposition}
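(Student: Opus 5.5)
Because $D\subset\Fix(\iota)$, the coordinates satisfy $\eta_i\circ\iota\circ\eta_i^{-1}(z)=\bar z$, and each $\omega_i$ satisfies $\overline{\omega_i(z)}=\omega_i(\bar z)$ (\cref{prop:reflection-symmetry-from-polynomial}), the plan is to prove $h^s(a)\in(-1,1)$ and then bound it along the real line using the Scherk boundary data. I will argue by contradiction against the uniform energy bound of \cref{prop:energybound}.

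\emph{Step 1: symmetry.} The domain $\Sigma_s$ is $\iota$-invariant. The boundary data satisfy $\omega_i(\eta_i(\iota p))=\omega_i(\overline{\eta_i(p)})=\overline{\omega_i(\eta_i(p))}$. Let $c(w)=\bar w$ be the reflection of $\HH^2$ across the real diameter; it is an isometry. Then $c\circ h^s\circ\iota$ is harmonic on $\Sigma_s$ and has the same boundary values as $h^s$. By uniqueness of the Dirichlet solution (\cite{KorevaarSchoen1993}), $h^s\circ\iota=c\circ h^s$. Hence $h^s$ maps $\Fix(\iota)\cap\Sigma_s$ into the real geodesic $(-1,1)$, and in particular $h^s(a)\in(-1,1)$.

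\emph{Step 2: reduction to the equicontinuity dichotomy.} Fix $r>1$ with $a\in\Sigma_r$. By \cref{prop:energybound}, $E(h^s,\Sigma_r)$ is bounded uniformly in $s$. Suppose $d(0,h^s(a))\to\infty$ along a subsequence. Then \cref{prop:equi}, applied on the connected interior of $\Sigma_r$, gives a further subsequence converging locally uniformly to a single ideal point $\xi\in\partial_\infty\HH^2$. By Step 1 we have $\xi=\pm1$, because $h^s(a)$ is real.

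\emph{Step 3: contradiction near a puncture.} Choose a point $b\in\Fix(\iota)\cap E_1$ with $|\eta_1(b)|$ slightly larger than $1$ and inside $\Sigma_r$. This is possible because $\eta_1^{-1}$ of the real interval $(1,r)$ lies in $\Fix(\iota)$. Then $h^s(b)\to\pm1$ as well. Now compare $h^s$ on $E_1\cap\Sigma_s$ with the Scherk map $\omega_1$. I would like a bound on $d(h^s,\omega_1)$ on $\gamma_{1,1}$ that is uniform in $s$, but that bound is exactly what is unknown. Instead I would use the structure of the proof of \cref{prop:usomegabound}. On the upper half annulus $A$, the function $F=d(h^s,\omega_1)$ is subharmonic, and it vanishes on $\gamma_{1,s}$. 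On the real segments, both $h^s$ and $\omega_1$ take values in the real geodesic. The image of $\omega_1$ meets $(-1,1)$ in a segment of finite length $M$, since $\pm1$ are not vertices (\cref{prop:criterion-for-pm1}).

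The cleaner route is to use the fact that a ball in $\HH^2$ centered at a point of the real geodesic is convex. Choose $R$ large, and take a small horoball-type region $H_\pm$ near $\pm1$ that avoids the closure of $\omega_1(\C)$. Such a region exists because $\pm1\notin\overline{\omega_1(\C)}\cap\partial_\infty\HH^2$. Composing $h^s$ with the nearest-point projection onto the closed convex set $\HH^2\setminus H_\pm$, taken as a half-plane, does not increase energy and preserves the boundary data. So by uniqueness of the energy minimizer, $h^s$ never enters the half-plane $H_\pm$. This contradicts $h^s(a)\to\pm1$. In fact it directly gives $h^s(\Sigma_s)\subset$ the convex hull of $\bigcup_i\overline{\omega_i(\C)}$, with no need for Steps 2–3. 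The desired bound then follows because $h^s(a)$ lies in that hull intersected with $(-1,1)$. That intersection is a bounded segment, since $\pm1$ are not vertices of any $\omega_i$, so no ideal vertex of the convex hull is at $\pm1$.

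The main obstacle is this convex-hull step. One must check that the convex hull of finitely many ideal polygons, none having $\pm1$ as a vertex, meets the real geodesic in a compact segment. This fails only if $\pm1$ is an ideal point of the hull, and the ideal points of the hull are exactly the union of the vertices. One must also justify that projection onto a half-plane does not increase energy and keeps the minimizer inside; this holds because nearest-point retraction onto a convex set in $\HH^2$ is $1$-Lipschitz.
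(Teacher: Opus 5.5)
Your argument is correct and is essentially the paper's: the $\iota$-symmetry of the Dirichlet data forces $h^s(a)\in(-1,1)$, and $h^s$ takes values in the convex hull of $\bigcup_i \omega_i(\C)$, which meets $(-1,1)$ in a bounded segment because $\pm1$ is not a vertex of any $\PP_i$. Your justification of the convex-hull containment (the $1$-Lipschitz retraction onto closed half-planes combined with uniqueness of the energy minimizer) supplies a step the paper uses tacitly. Two small points: Steps 2--3 can simply be deleted, and the half-plane near $\pm1$ must avoid every $\overline{\omega_i(\C)}$, not only $\overline{\omega_1(\C)}$.
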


\begin{proof}
    By taking $s$ sufficiently large, we may assume that $a \in \Sigma_s$ for every $s$. We have $\overline{h^s}=h^s\circ \iota$ by the same symmetry of its boundary value. Thus, $h^s(a) \in (-1,1)$ for all $s$. Let $U_i=\omega_i(\mathbb{C})$, and let $U$ be the convex hull of the union of $U_1,...,U_k$. The distance between points in $U \cap (-1,1)$ and $0$ is bounded by a constant $B>0$ because, by assumption, the ideal boundary of each $U_i$ contains neither $1$ nor $-1$. Therefore, $h^s(a) \in U\cap (-1,1)$ and
    \[
         d_{\HH^2}\big(0,h^s(a)\big) \leq B
    \]
    for all $s$.
\end{proof}

By \cref{prop:equi} and a standard diagonal argument, there exists a sequence $\{s_j\} \rightarrow +\infty$ such that $h^{s_j}$ converges locally uniformly on $\Sigma$ to a harmonic map $h$. 

\begin{proposition}\label{prop:asymptotic}
    For each $i$, the map $h$ is asymptotic to $\PP_i$ at the puncture $p_i$. Here $\PP_i$ is the ideal polygon that bounds $U_i=\omega_i(\C)$. 
\end{proposition}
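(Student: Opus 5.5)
Concretely, "asymptotic to $\PP_i$" will mean the bound in \cref{thm:main}: there is $C$ with $d_{\HH^2}\big(h(p),\omega_i(\eta_i(p))\big)\le C$ for $p\in E_i$ near $p_i$. Hence, since $\omega_i$ is a diffeomorphism onto the interior of $\PP_i$, the image of $h$ near $p_i$ stays in a bounded neighborhood of $\PP_i$ and exhausts it. The plan is to prove the bound uniformly for the approximants $h^s$ on $E_i\cap\Sigma_s$, with a constant independent of $s$, and then pass to the limit $s_j\to\infty$ using local uniform convergence.

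First, I will control $h^s$ on the fixed circle $\gamma_{i,r_0}$ for some fixed $r_0>1$, say $r_0=2$. By \cref{prop:energybound}, $E(h^s,\Sigma_{r_0+1})$ is uniformly bounded. By \cref{prop:onepointbound}, $h^s(a)$ is bounded. \cref{prop:equi} then gives equicontinuity on compact sets, hence a uniform bound $d_{\HH^2}(h^s,0)\le M_1$ on the compact set $\gamma_{i,r_0}$ for all large $s$. Since $\omega_i(\gamma_{r_0})$ is also a compact subset of $\HH^2$, we get $d_{\HH^2}(h^s,\omega_i)\le M_2$ on $\gamma_{i,r_0}$, uniformly in $s$.

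Second, I will propagate this bound outward on the annulus $\Omega_{r_0,s}$ in the chart $\eta_i$. Both $h^s$ and $\omega_i$ are harmonic there, so $f=d_{\HH^2}(h^s,\omega_i)$ is subharmonic; this is the standard fact for the distance between two harmonic maps into a nonpositively curved target. Moreover $f=0$ on $\gamma_s$ by the boundary condition \eqref{eq:dirichlet}, and $f\le M_2$ on $\gamma_{r_0}$. The maximum principle therefore gives $f\le M_2$ on all of $\Omega_{r_0,s}$, uniformly in $s$. Letting $s=s_j\to\infty$ with $h^{s_j}\to h$ locally uniformly, we obtain $d_{\HH^2}(h,\omega_i\circ\eta_i)\le M_2$ on $\{|\eta_i|\ge r_0\}$. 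This is exactly the statement, with $C=\max_i M_2$.

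The step needing care is the first: we need uniform bounds on $h^s$ on a fixed compact set, not just at the single point $a$. This is where \cref{prop:onepointbound} (using $\iota$-symmetry) and the energy bound must be combined via \cref{prop:equi}. The connectedness of $\Sigma$ ensures the chain of equicontinuity estimates reaches every $\gamma_{i,r_0}$. Once this is in place, the maximum-principle step is routine.
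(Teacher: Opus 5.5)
Your proposal is correct and follows essentially the same route as the paper: bound $d_{\HH^2}(h^s,\omega_i)$ uniformly on a fixed circle $\gamma_{i,r}$, then use subharmonicity and the maximum principle on the annulus out to $\gamma_{i,s}$ (where the distance vanishes), and pass to the limit along $s_j$. The only differences are minor. The paper takes the bound on $\gamma_{i,r}$ from the convergence of $h^{s_j}$, whereas your equicontinuity-plus-one-point argument gives it for all large $s$ with a constant depending only on the data, which is what \cref{rem:absolute-constant} asserts. The paper also invokes \cref{prop:pole} to include meromorphicity of $\Hopf(h)$ in ``asymptotic''.
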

\begin{proof}
    For the sake of simplicity, we drop the index $i$. Fix $r>1$. On the annulus region $ E  \cap (\Sigma_s\backslash\Sigma_r),$ the function
    \[
        f^s(z)=d_{\HH^2}\big(h^s(z),\omega(z)\big)
    \]
    is subharmonic. We have $f^s=0$ on $\gamma_s$. By the convergence of $\{h^{s_j}\},$ we have $f^{s_j}(z) \leq C$ on $\gamma_r$ for all $s_j.$ Here $C$ is a positive constant independent of $s$. Therefore 
    \begin{equation}\label{eq:bound-hj}
        ||f^{s_j}||_{L^\infty\left(E  \cap (\Sigma_{s_j}\backslash\Sigma_r)\right)} \leq C.
    \end{equation}
    Passing to the limit,
    \begin{equation}\label{eq:bound-h}
        d_{\mathbb{H}^2}\big(h(z),\omega(z)\big) \leq C
    \end{equation}
    for all $z \in E \backslash \Sigma_r$. Thus $h$ must reach exactly the same ideal boundary points as $\omega$ does. By \cref{prop:pole}, $\Hopf(h)$ is meromorphic at each of $p_1,\dots,p_k$. Therefore it must be asymptotic to $\PP_i$ at $p_i$.
\end{proof}

\begin{remark}\label{rem:absolute-constant}
    The constant $C$ comes from:
    \begin{itemize}
        \item the uniform bound for $\{ h^s(a)\}$,
        \item the equicontinuity of $\{h^s\}$.
    \end{itemize}
    The bound for $\{h^s(a)\}$ is determined as long as the surface $X$, the finite subset $D$ and all quadratic differentials $\Phi_1,\dots,\Phi_k$ are fixed. The contribution from the equicontinuity is controlled by the energy bound of $h^s$, which is given by the boundedness of parachute maps and their gradients. Thus $C_r$ only depends on $X,D,\Phi_1,\dots,\Phi_k$ and $r$. In particular, different choices of the convergent subsequence $\{h^{s_j}\}$ satisfy the same estimate 
    \eqref{eq:bound-hj}, and both of their limits, although may differ at present, satisfy \eqref{eq:bound-h}.
\end{remark}

\begin{proposition}\label{prop:unique}
    Given two harmonic maps $f, g: X\backslash D \to \HH^2$ such that
    \[
        \sup_{z\in X \backslash D} d_{\HH^2}\big(f(z),g(z)\big) \leq M < +\infty.
    \]
    Then $f$ must agree with $g$ identically, provided that either $f(X \backslash D)$ or $g(X \backslash D)$ is not contained in a geodesic in $\HH^2$.
\end{proposition}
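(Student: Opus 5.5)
The plan is the classical argument for uniqueness of harmonic maps into negatively curved targets at bounded distance, with the compactness of $X$ replacing compactness of the domain. Set $d(z)=d_{\HH^2}\big(f(z),g(z)\big)$. Since $\HH^2$ has curvature $-1$ and $f,g$ are harmonic, $d$ is subharmonic on $X\backslash D$; more precisely, where $d>0$ one has a Kato-type inequality of the form
\[
    \Delta d \geq c\,\tanh\!\Big(\frac{d}{2}\Big)\,\Big(\norm*{(\dd f)^{\perp}}^2+\norm*{(\dd g)^{\perp}}^2\Big)\geq 0,
\]
where $\perp$ denotes the component orthogonal to the geodesic joining $f(z)$ and $g(z)$. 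The same holds for $\cosh d$ or $d^2$. I would use the smooth function $\psi=\cosh d$, which is subharmonic everywhere.

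\textbf{Step 1: $d$ is constant.} $X\backslash D$ is a compact Riemann surface with finitely many punctures, so it is parabolic: bounded subharmonic functions on it are constant. Equivalently, a bounded subharmonic function extends across isolated punctures (removable singularity) to a subharmonic function on the compact surface $X$, which must be constant by the maximum principle. Since $d\leq M$, $\psi$ is bounded, hence constant, so $d\equiv d_0$.

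\textbf{Step 2: constant distance forces degeneracy unless $d_0=0$.} Suppose $d_0>0$. Since $\Delta\psi=0$ and the inequality above is an equality only in a rigid situation, we get $(\dd f)^\perp=(\dd g)^\perp=0$ everywhere. Hence the differentials of $f$ and $g$ are tangent to the geodesic through $f(z)$ and $g(z)$. Moreover, the Jacobi-field/second-variation computation for $\psi$ also forces the geodesic direction field to be parallel along $f$ and $g$. Concretely, let $\ell(z)$ be the complete geodesic through $f(z)$ and $g(z)$. Then $\ell(z)$ is locally constant: its derivative in $z$ is controlled by the perpendicular components, which vanish. Since $X\backslash D$ is connected, $\ell$ is a single geodesic, and both $f$ and $g$ map into it. This contradicts the hypothesis that one of the images is not contained in a geodesic, so $d_0=0$ and $f\equiv g$.

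\textbf{Main obstacle.} The hardest step is making Step 2 rigorous. I need the precise Hessian formula for the distance function on $\HH^2\times\HH^2$ off the diagonal: for $d$ at a pair $(x,y)$, the Hessian is positive definite on vectors whose perpendicular components differ in the appropriate transported sense. The cleanest route is to write $\Delta \psi$ explicitly using $\nabla^2\cosh d$, in the spirit of the Busemann computation in \cref{lem:arclength-to-distance}. On a geodesic, the Hessian of $\cosh$ of distance to a point is $\cosh(d)\,g$ in the perpendicular directions. From this, $\Delta\psi=0$ yields $(\dd f)^\perp$ and $(\dd g)^\perp$ equal to zero, up to a parallel-transport relation. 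From there, the tangency argument shows $\ell$ is constant.

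A secondary point is the removability of the punctures for bounded subharmonic functions. This is standard for isolated points in dimension two, because a point has zero capacity.
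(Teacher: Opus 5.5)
Your proposal is correct and is essentially the paper's argument. The bounded subharmonic distance extends across the punctures and is therefore constant; then the second-variation (Jacobi field) formula in curvature $-1$ forces the components of $\dd f$ and $\dd g$ normal to the connecting geodesic to vanish, so both images lie in one geodesic. Your displayed inequality does hold with $c=1$, since the index form on normal components $v,w$ is $\big((v^2+w^2)\cosh d-2vw\big)/\sinh d\geq (v^2+w^2)\tanh(d/2)$; and your remark that the geodesic through $f(z),g(z)$ then has zero derivative is a slightly cleaner replacement for the paper's ODE argument with $F_*\partial_s=a(s)F_*\partial_t$.
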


\begin{proof}
    Let $r(x,y)=d_{\HH^2}(x,y)$ and $\rho=r\circ(f,g) \leq M$. Since $\rho$ is subharmonic and bounded on $X \backslash D$, it extends to each of the punctures. Thus we must have $\rho\equiv c \in [0,+\infty)$. We adapt the argument used in the proof of \cite[Lemma 3.11]{Sagman2023} to rule out the possibility of $c>0$.

    For each $p$, let $\gamma_p:[0,c] \to \HH^2$ be the unit speed geodesic segment $\gamma$ connecting $f(p)$ and $g(p)$. Let ${e_1,e_2}$ be an orthonormal basis of $T_p \Sigma$. Let $J_i$ be the Jacobi field along $\gamma$ with $J_i(0)=\dd f(e_i),J_i(c)=\dd g(e_i)$ for $i=1,2$. The second variation formula (see \cite[Section 2]{ChenJostWang2013}) gives:
    \[
        \nabla^2 r \Big( \big( \dd f(e_i),\dd g(e_i)\big),\big(\dd f(e_i),\dd g(e_i)\big) \Big) = \int _0 ^c \norm*{\nabla _{\gamma_p'} J_i^\perp(t)}^2 - R(J_i^\perp,\gamma_p',J_i^\perp,\gamma_p')\dd t.
    \]
    Summing over $i$:
    \begin{equation}\label{eq:variation}
        0= \Delta \rho = \sum_{i=1}^2 \int _0 ^c \norm*{ \nabla _{\gamma_p'} J_i^\perp(t)}^2 - R(J_i^\perp,\gamma_p',J_i^\perp,\gamma_p')\dd t.
    \end{equation}
    Decompose $J_i=a_i\gamma' + Y_i$, where $Y_i$ is orthogonal to $\gamma'$. Then \eqref{eq:variation} becomes:
    \[
        0=\sum_{i=1}^2\int_0^c \norm *{\nabla_{\gamma_p'}Y_i}^2+ \norm*{Y_i}^2 \dd t.
    \]
    Here we have used that the sectional curvature of $\HH^2$ is $-1$.
    Thus $Y_i$ vanishes along $\gamma$. 

    Since $\rho$ is constant, the first variation formula gives
    \[
        0=\dd\rho\big|_{p}(e_i)=\inner{J_i(c)}{\gamma_p'(c)}-\inner{J_i(0)}{\gamma_p'(0)} .
    \]
    Thus $a_i(c)=a_i(0)$. The Jacobi equation gives $a_i'' \equiv 0$. Therefore $a_i$ is a constant. We extend $\gamma_p$ to a complete unit speed geodesic with parameter $t\in\R$. Then the Jacobi field is still a constant proportion of the tangent field on the extended geodesic.

    Consider a smooth curve $l$ on $\Sigma$ parameterized by $s$, let $F(s,t)=\gamma_{l(s)}(t)$. From the discussion above, we conclude that
    \[
        F_*\frac{\partial}{\partial s} = a(s) F_*\frac{\partial}{\partial t}
    \]
    for a smooth function $a(s)$. For any fixed $(s_0,t_0)$, solve the following ODE:
    \begin{equation*}
        \left\{
        \begin{aligned}
            &\frac{\dd t}{\dd s}=-a(s),\\
            &\,t(s_0)=t_0.
        \end{aligned}
        \right.
    \end{equation*}
    Denote the solution by $\tau(s)$. We have
    \[
        \frac{\dd}{\dd s} F\big(s,\tau(s)\big) = 0.
    \]
    Thus 
    \[
        F(s_0,t_0)=F\big(0,\tau(0)\big).
    \]
    Define $\theta(s_0,t_0)=\tau(0)$. Since $(s_0,t_0)$ is arbitrary, we obtain
    \[
        F(s,t)= F\big(0,\theta(s,t)\big) \in \gamma_{0} 
    \]
    for all $(s,t)$. Thus the images of $f$ and $g$ must be contained in a geodesic in $\HH^2$, which is a contradiction.
\end{proof}

\begin{corollary}\label{coro:unique-limit}
    Suppose that the union of all $\PP_1,\dots,\PP_k$ is not contained in a geodesic. The family $\{h^s\}$ then converges to $h$ locally uniformly on $\Sigma$.
\end{corollary}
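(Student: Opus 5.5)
The plan is to show that every subsequential limit of $\{h^s\}$ equals one fixed harmonic map. Once that is known, a standard subsequence argument gives convergence of the whole family.

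\emph{Step 1: any two subsequential limits agree.} Let $\{s_j\}$ and $\{s'_j\}$ be two sequences tending to $+\infty$ along which $h^{s_j}\to h$ and $h^{s'_j}\to h'$ locally uniformly. Such subsequences exist by \cref{prop:energybound}, \cref{prop:onepointbound} and \cref{prop:equi}. By \cref{rem:absolute-constant}, both limits satisfy \eqref{eq:bound-h} on each end $E_i\backslash\Sigma_r$ with the same constant $C$, which depends only on $X,D,\Phi_1,\dots,\Phi_k$ and $r$. The triangle inequality then gives
\[
    d_{\HH^2}\big(h(z),h'(z)\big)\leq d_{\HH^2}\big(h(z),\omega_i(z)\big)+d_{\HH^2}\big(\omega_i(z),h'(z)\big)\leq 2C
\]
on every end. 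On the compact set $\overline{\Sigma_r}$ the distance $d_{\HH^2}(h,h')$ is bounded because it is continuous. Hence $\sup_{\Sigma} d_{\HH^2}(h,h')<\infty$.

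\emph{Step 2: the image of $h$ is not contained in a geodesic.} By \eqref{eq:bound-h}, $h$ stays within bounded distance of $\omega_i$ near $p_i$. So $h$ approaches every ideal vertex of $\PP_i$ that $\omega_i$ approaches, for instance along the radial rays $l_k$ from the proof of \cref{prop:criterion-for-pm1}. The closure of $h(\Sigma)$ at infinity therefore contains all ideal vertices of all the $\PP_i$.

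Suppose $h(\Sigma)$ lay in a geodesic $\gamma$. Then its ideal accumulation points would be among the two endpoints of $\gamma$. All ideal vertices of the $\PP_i$ would then lie in $\partial_\infty\gamma$, so every $\PP_i$ would be $\gamma$ viewed as a degenerate $2$-gon, and $\bigcup\PP_i$ would lie in a geodesic. This contradicts the hypothesis.

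The delicate point is making "bounded distance implies same ideal vertices" rigorous. I would argue it as follows. If $x_n\to\xi\in\partial_\infty\HH^2$ and $d_{\HH^2}(y_n,x_n)\leq 2C$, then $y_n\to\xi$ as well. So the limit set of $h$ along the rays $l_k$ contains each vertex of each $\PP_i$.

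\emph{Step 3: conclude.} Steps 1 and 2 allow us to apply \cref{prop:unique} to $h$ and $h'$, which gives $h\equiv h'$. Now suppose the full family $h^s$ did not converge locally uniformly to $h$. Then there would be a compact set $K\subset\Sigma$, an $\varepsilon>0$, and a sequence $s_n\to\infty$ with $\sup_K d_{\HH^2}(h^{s_n},h)\geq\varepsilon$. Applying \cref{prop:energybound}, \cref{prop:onepointbound} and \cref{prop:equi} to $\{h^{s_n}\}$, we extract a further locally uniformly convergent subsequence. By Step 1 its limit is $h$, which contradicts the choice of $\varepsilon$.

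I expect the main obstacle to be Step 1. It relies on the constant in \eqref{eq:bound-h} being uniform across different subsequences. This follows because the bound on $\gamma_r$ comes from the uniform energy bound and the uniform bound at the point $a$, and neither depends on which subsequence is chosen.
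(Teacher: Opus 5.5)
Your proposal is correct and follows the paper's argument: you bound the distance between any two subsequential limits via \cref{rem:absolute-constant} and then apply \cref{prop:unique}. You also supply two steps the paper leaves implicit, namely that the hypothesis on the $\PP_i$ forces $h(\Sigma)$ not to lie in a geodesic (as \cref{prop:unique} requires), and the sub-subsequence argument that upgrades a unique limit point to convergence of the whole family. One small correction to your self-assessment: uniformity of $C$ across subsequences is not actually needed, because each limit separately stays within finite distance of $\omega_i$ near $p_i$, so the triangle inequality already gives $\sup_{\Sigma} d_{\HH^2}(h,h')<\infty$, which is all \cref{prop:unique} asks for.
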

\begin{proof}
    Suppose $\{h^s\}$ has two limit points $h$ and $g$. By \cref{rem:absolute-constant}, we have $\rho=d_{\HH^2}(g,h) \leq C$ for an absolute constant $C>0$, independent of $h$ and $g$. Then $h=g$ by \cref{prop:unique}.
\end{proof}

\begin{proof}[Proof of \cref{thm:main}]
    On each member of the exhaustion $\{\Sigma_s\}$ of $\Sigma$, we solve the Dirichlet boundary value problem of harmonic maps \eqref{eq:dirichlet} and obtain a family of solutions $\{h^s\}$. The local equicontinuity of the family is given by \cref{prop:energybound}. This together with \cref{prop:onepointbound} show that the family $\{h^s\}$ is precompact. Any accumulation point of $\{h^s\}$ has a bounded distance away from the prescribed Scherk map near each of the punctures, by \cref{rem:absolute-constant}. The meromorphicity and the pole order of $\Hopf(h)$ at punctures is then given by \cref{prop:pole}. If the union of the images of all $\omega_i$ is not contained in a geodesic, \cref{prop:unique} then shows that the family only has one accumulation point $h$, and any harmonic map $g$ satisfies $d_{\HH^2}(g,\omega_i) \leq C$ near $p_i$ for each $i$ must coincide with $h$. This completes the proof of \cref{thm:main}.
\end{proof}

\section{A principal part criterion for bounded distance}\label{sec:principal-bound}

Recall that we have chosen the local coordinate $(E_i, \eta_i)$ near each puncture $p_i$. For any holomorphic quadratic differential $q$ on $X \backslash D$ which has a pole at each $p_i$ with order at least $3$, we define its principal part in this coordinate. The following definition is similar to \cite[Definition 2.5]{Gupta2021Harmonic}. The only difference is that the coordinate is parameterized by $\{|z|>1\}$ in our case, whereas by $\{0<|z|<1\}$ in Gupta's definition. 

\begin{definition}\label{def:principal}
    Let $q$ be as above. Its principal part at $p_i$, denoted by $\pr(q,p_i)$, is a truncation of the Laurent expansion of $\sqrt{q}$ to the $z^{-1}$ term near $p_i$ in the coordinate $(E_i,\eta_i)$. If the order of $q$ at $p_i$ is odd, we define the square root and the principal part on a double cover of $E_i$.

    In particular, if the order of $q$ at $p_i$ is an even number $2d+4$, its principal part has the form
    \[
        \pr(q,p_i)= \big(c_d z^d+\dots+c_0+c_{-1}z^{-1}\big) \dd z.
    \]
    Note that there is a sign flexibility of the choice of square root, as well as principal part. In the following, \textbf{two principal parts are equal} means that they are equal for suitably chosen signs.
\end{definition}

Suppose $\Hopf(f)$ is the Hopf differential of a harmonic map $f: X\backslash D \to \HH^2$ that is meromorphic at the punctures. We make further assumption that in coordinate $(E_i,\eta_i)$, it has the form
\begin{equation}\label{eq:assumptionhopf}
    \Hopf(f)= A_i(f)z^{N_i}\big(1+o(1)\big) \dd z^2
\end{equation}
for $N_i \geq 0$ as $z$ approaches $\infty$. Here $A_i(f)$ is a constant depends only on $f$ and $i$. By a computation similar to that in the proof of \cref{prop:criterion-for-pm1}, there are $N_i+2$ horizontal asymptotic directions
\[
    \theta_{i,k}= \frac{-\aarg A_i+2k\pi}{N_i+2}, \quad k=0,1,\dots,N_i+1.
\]

\begin{definition}\label{def:normalizedasymptotic}
    For an ideal polygon $P$ of $\HH^2$, suppose its ideal vertices are $\{\xi_1,\dots, \xi_{N_i+2}\}$ in cyclic order. We say $f$ is asymptotic to $P$ at $p_i$ in the normalized sense, if the image of $f$ approaches $\xi_{k+1}$ along the ray $\{t\e ^{\ii \theta_{i,k}}\}$ as $t$ tends to infinity in coordinate $(E_i,\eta_i)$ for each $k$. If we say $f$ is asymptotic to $P$ at $p_i$ in the normalized sense, we require that $\Hopf(f)$ satisfies \eqref{eq:assumptionhopf} priorly.
\end{definition}

It is shown in \cref{app:decayestimate} that, in a horizontal half plane $H$ of $q=\Hopf(f)$, the pullback of the hyperbolic metric is
\[
    f^*\sigma = 4 \cosh ^2\eta\dd x ^2 + 4 \sinh ^2\eta \dd y^2
\]
with 
\[
    |\eta|+|\nabla\eta| \leq A\e^{-cR}.
\]
Here $\eta=\frac{1}{2}\log H$ and $R$ is the distance to the boundary of the half plane. See \cref{lem:decay}. The constant $A,c>0$ depends only on $f$. As a consequence, the geodesic curvature of a horizontal line in the half plane satisfies
\[
    |\kappa_g| \leq \frac{1}{2}A \e^{-cR}.
\]
See \eqref{eq:kg-bound}.

For $f$ that is asymptotic to an ideal polygon $P$ in the normalized sense, we can associate the horizontal half plane $H$ with a unit speed geodesic edge $\gamma$ of $P$, such that the image of $f$ tends to $\gamma$ as the $y$-factor tends to infinity in $H$. There is only one direction for $y$ to go to infinity, depending on the choice of the sign of $\sqrt{q}$. Also note that, if we fix $y$ and let $x \to +\infty(-\infty,\text{respectively})$, the image of $f$ will tend to the positive ideal endpoint of $\gamma$ (the negative ideal end point of $\gamma$, respectively).

\begin{lemma}\label{lem:parameterofdepth}
    In a horizontal half plane $H$ of $f$, we have
    \[
        d_{\HH}\big( f(x,y), \gamma(2x+r) \big) \leq M
    \]
    for arbitrary $(x,y) \in H$ with $|y|>2$ and two constants $M>0,r\in \R$. 
\end{lemma}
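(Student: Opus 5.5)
Fix the horizontal half plane $H$ and its associated geodesic edge $\gamma$. I take natural coordinates $x+\ii y$ on $H$, chosen so that $y\to+\infty$ (or $-\infty$) is the deep direction in which $f$ approaches $\gamma$. The idea is to compare $f$ along horizontal lines with the unit speed parametrization of $\gamma$. The pulled-back metric has the form
\[
    f^*\sigma = 4\cosh^2\eta\,\dd x^2+4\sinh^2\eta\,\dd y^2,
\]
so a horizontal segment has hyperbolic length $2\int\cosh\eta\,\dd x$, which equals twice its Euclidean length up to an exponentially small error. This explains the factor $2x$ in the statement.

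\textbf{Step 1 (horizontal lines are quasi-geodesics tracking $\gamma$).} I fix $y$ with $|y|>2$, so that the depth $R$ is bounded below. By \eqref{eq:kg-bound}, the total absolute geodesic curvature of the horizontal line $L_y$ is at most $\int \tfrac12 A\e^{-cR}\,\dd x$. Here I expect the main obstacle: $R$ is the distance to $\partial H$, and it is not constant along $L_y$ if $\partial H$ is vertical. I plan to use the convention that $R\approx |y|$ (the half plane being $\{y>0\}$ up to a compact set), so that the curvature bound is uniform along each line. Integrability in $x$ then needs an extra argument. I would first try to strengthen \cref{lem:decay} to $|\eta|\le A\e^{-c(R+|x|)}$ using the polygonal structure, or alternatively restrict to windows of bounded total curvature and chain them together.

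With a finite total-curvature bound less than $1$ (after increasing the threshold $|y|>2$ if necessary), \cref{lem:arclength-to-distance} shows that $x\mapsto f(x,y)$ is a $(2+o(1))$-bi-Lipschitz quasi-geodesic. Its two ends tend to the positive and negative ideal endpoints of $\gamma$. By Morse stability in $\HH^2$, it therefore stays within bounded distance of $\gamma$, and its nearest-point projection to $\gamma$ is $\gamma(2x+r(y)+O(1))$.

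\textbf{Step 2 (independence of $y$).} Moving vertically from $(x,y)$ to $(x,y')$ has hyperbolic cost $2\int|\sinh\eta|\,\dd y\lesssim \e^{-c\min(|y|,|y'|)}$, which is summable in $y$. Hence the offsets $r(y)$ differ by a bounded amount and converge as $|y|\to\infty$. I define $r$ as this limit, or simply as $r(3)$.

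\textbf{Step 3 (assembling).} The triangle inequality, combined with Steps 1 and 2, gives $d_{\HH}\big(f(x,y),\gamma(2x+r)\big)\le M$ uniformly on $\{|y|>2\}$. The sign convention that $x\to+\infty$ corresponds to the positive endpoint of $\gamma$ fixes the orientation, so the coefficient is $+2$ and not $-2$.
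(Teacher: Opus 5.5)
Your Step 1 does not go through as written, and it is the crux of the lemma.

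First, consider the curvature bound. In the natural coordinate $H=\{\varepsilon y>\varepsilon b\}$ is bounded by a horizontal line, so the depth is $|y-b|$ and your worry about a vertical boundary does not arise. The only bound available is then \eqref{eq:kg-bound}, which is the constant $\tfrac12 A\e^{-c|y|}$ along $L_y$. Its integral over $x\in\R$ diverges. The strengthened decay $|\eta|\le A\e^{-c(R+|x|)}$ you would need is an extra lemma you have not proved.

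Second, and more seriously, even with finite total curvature $A_0<1$ the estimate is too weak. \cref{lem:arclength-to-distance} gives only $d\big(f(x_1,y),f(x_2,y)\big)\ge 2|x_2-x_1|(1-A_0)$, a deficit that grows linearly in $|x_2-x_1|$. Chaining windows has the same defect. Morse stability makes a $(2\pm\delta)$-bi-Lipschitz quasi-geodesic Hausdorff close to $\gamma$. But its projection $\gamma(t(x))$ then only satisfies $t(x)=2x+O(\delta|x|)+O(1)$. Your claim $t(x)=2x+r(y)+O(1)$ uniformly in $x$ does not follow, and Step 2 presupposes it when it speaks of the offsets $r(y)$. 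Yet this exact parametrization is the content of the lemma: it is what \cref{prop:principaltobounded} uses to compare $f$ and $g$ through $x^f-x^g$. Getting an additive bound along the whole line would need a genuinely hyperbolic estimate on the chord deficit of curves with integrable curvature, and your proposal has none.

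The missing idea is to reverse the order of limits, which is what the paper does. Your Step 2 bound $\norm*{f_y}\le C\e^{-c|y|}$ is uniform in $x$, so $u(x)=\lim_{y\to\infty}f(x,y)$ exists and lies on $\gamma$.

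Now apply \cref{lem:arclength-to-distance} only on a fixed finite window $[x_1,x_2]$ at height $y$. The upper bound $2|x_2-x_1|+C|x_2-x_1|\e^{-c|y|}$ tends to $2|x_2-x_1|$ as $y\to\infty$. So does the lower bound $\big(1-C'|x_2-x_1|\e^{-c|y|}\big)\big(2|x_2-x_1|-C'|x_2-x_1|\e^{-c|y|}\big)$. Hence $d\big(u(x_1),u(x_2)\big)=2|x_1-x_2|$ exactly, and $u(x)=\gamma(2x+r)$ with no error.

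The uniform estimate is then just the vertical integral $d\big(f(x,y),\gamma(2x+r)\big)\le\int_{|y|}^{\infty}C\e^{-ct}\,\dd t$, independent of $x$. This route needs neither global integrability of the curvature nor the Morse lemma.
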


\begin{proof}
    We have
    \[
        \big| \norm *{f_x}-2 \big| \leq C\e^{-c|y|}
    \]
    and
    \[
        \norm*{f_y} \leq C\e ^{-c|y|}.
    \]
    Fix $x$ and let $y \to \infty$, the limit
    \[
        u(x)= \lim_{y \to \infty} f(x,y)
    \]
    exists and lies in $\gamma$. Suppose $u(x)=\gamma(t_x)$. For any $x_1<x_2 \in R$, we have the estimate
    \[
        d_{\HH^2}(f(x_1,y),f(x_2,y)) \leq \int_{x_1}^{x_2} \norm*{f_x(t,y)} \dd t \leq 2|x_2-x_1| + C|x_2-x_1|\e^{-c|y|}.
    \]
    By \cref{lem:arclength-to-distance}, we also obtain the converse estimate
    \begin{align*}
        d_{\HH^2}(f(x_1,y),f(x_2,y)) &\geq \left( 1-\int_{x_1}^{x_2} |\kappa_g|\norm*{f_x} \dd t \right) \int_{x_1}^{x_2} \norm*{f_x(t,y)} \dd t \\
        &\geq  \left( 1- C'|x_2-x_1|\e^{-c|y|} \right)\left(2|x_2-x_1|-C'|x_2-x_1|\e^{-c|y|}\right).
    \end{align*}
    Passing to the limit, we have
    \[
        d_{\HH^2} \big(u(x_1),u(x_2)\big) = 2|x_1-x_2| = d_{\HH^2} \big(\gamma(t_{x_1}),\gamma(t_{x_2})\big).
    \]
    Thus $t_{x_1}-t_{x_2}=2(x_1-x_2)$, which is equivalent to $t_x=2x+r$ for a constant $r \in \R$. The distance satisfies
    \[
        d_{\HH^2}\big(f(x,y),\gamma(2x+r)\big) \leq \abs*{ \int_y^{\infty} CA\e^{-c|t|} \dd t} \leq M
    \]
    as desired.
\end{proof}

We now give a criterion for bounded distance between harmonic maps from $X \backslash D$ to $\HH^2$, using their principal parts. The proof of the following two propositions are analogous to that of \cite[Proposition 3.9]{Gupta2021Harmonic}.

\begin{proposition}\label{prop:principaltobounded}
    Suppose $f,g$ are two harmonic maps defined on a small neighborhood of $p_i$ to $\HH^2$ such that they are asymptotic to the same ideal polygon in the normalized sense and their Hopf differentials have the same principal part at $p_i$. Then the distance between $f$ and $g$ is bounded near $p_i$.
\end{proposition}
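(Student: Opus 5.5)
Near $p_i$ we work in the coordinate $(E_i,\eta_i)$, so $z\in\{|z|>R_0\}$, and compare $f$ and $g$ through the natural coordinates of their Hopf differentials $q_f=\Hopf(f)$ and $q_g=\Hopf(g)$. Both have pole order $N_i+4$ at $p_i$, so both satisfy \eqref{eq:assumptionhopf} with the same $N_i$. Because the principal parts agree (after fixing signs), we have $\sqrt{q_f}-\sqrt{q_g}=O(z^{-2})\,\dd z$. Integrating, the natural coordinates
\[
    w_f=\int^z\sqrt{q_f},\qquad w_g=\int^z\sqrt{q_g}
\]
contain the same $\log$ term $c_{-1}\log z$, and their difference $w_f-w_g$ tends to a constant. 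After adjusting base points this constant is zero, so $w_f-w_g=O(|z|^{-1})$.

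Next we cover a neighborhood of $\infty$ by finitely many horizontal half planes $H_1,\dots,H_{N_i+2}$ of $q_f$, one per horizontal asymptotic direction $\theta_{i,k}$, together with compact pieces. For $R_0$ large, each $H_k$ is also (up to a bounded shift) a horizontal half plane for $q_g$, because $w_g=w_f+o(1)$. This is the polygonal exhaustion picture in \cref{app:polygonexhaustion}: the images $w_f(\{|z|>R_0\})$ and $w_g(\{|z|>R_0\})$ are the exteriors of polygons made of half planes glued along their boundaries. Since $f$ and $g$ are asymptotic to the same polygon in the normalized sense, the half plane $H_k$ is associated to the same geodesic edge $\gamma_k$ of $P$ for both maps.

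On $H_k$ we apply \cref{lem:parameterofdepth} to $f$ and to $g$. This gives
\[
    d_{\HH^2}\bigl(f,\gamma_k(2x_f+r_f)\bigr)\le M,\qquad d_{\HH^2}\bigl(g,\gamma_k(2x_g+r_g)\bigr)\le M,
\]
where $x_f=\re w_f$ and $x_g=\re w_g$ differ by $o(1)$. Hence $d(f,g)\le 2M+2\abs{r_f-r_g}+o(1)$ on $H_k\cap\{|y|>2\}$. The region of $\{|z|>R_0\}$ not covered by the sets $\{|y|>2\}$ consists of strips near the boundaries of the half planes. Each such strip lies deep inside the adjacent half plane, whose $y$-direction is along the other sign, so it is still controlled by the same estimate. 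Any leftover compact region is bounded trivially. Alternatively, once $d(f,g)$ is bounded on the boundary of a large polygonal domain and along all far parts, subharmonicity of $d(f,g)$ (as in the proof of \cref{prop:asymptotic}) propagates the bound.

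The main obstacle is matching the horizontal parameters: we need $2x_f+r_f$ and $2x_g+r_g$ to differ by a bounded amount on each half plane. The offsets $r_f,r_g$ are fixed constants per half plane, so this is automatic once $x_f-x_g$ is bounded. That is exactly where equality of the principal parts, including the residue term $c_{-1}$, is used. If the $z^{-1}$ coefficients differed, $x_f-x_g$ would grow like $\log|z|$ and the distance would be unbounded. A secondary point is checking that the sign choice of $\sqrt{q}$, together with the normalized asymptotics, assigns each half plane to the same edge with the same orientation for $f$ and $g$. This follows from \cref{def:normalizedasymptotic}, since the rays $t\e^{\ii\theta_{i,k}}$ go to the same vertex $\xi_{k+1}$ for both maps.
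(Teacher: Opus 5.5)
\textbf{There is a genuine gap.} Your treatment of the deep parts of the horizontal half planes matches the paper's argument. Equal principal parts make $x_f-x_g$ bounded, and applying \cref{lem:parameterofdepth} to $f$ and $g$ with the same edge $\gamma_k$ bounds $d(f,g)$ there.

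The gap is in the rest of the neighbourhood. Horizontal half planes and compact pieces do not cover a neighbourhood of $p_i$. Horizontal half planes correspond to edges of $P$, not to the directions $\theta_{i,k}$. Around each horizontal asymptotic direction there is an unbounded half-strip that is missed: in the coordinate of the vertical half plane $V_k$ it is $\{\tau_k u_k>\tau_k a_k+T_0\}$ with $v_k$ in a bounded interval around $[b_{k+1},b_k]$. This strip lies at bounded depth in both $H_k$ and $H_{k+1}$.

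Your claim that such a strip lies deep inside the adjacent half plane is false. Consecutive horizontal half planes are disjoint and separated by $V_k$; in \cref{app:polygonexhaustion}, $H_k$ meets only $V_{k-1}$ and $V_k$. Already for $q=\dd z^2$ the horizontal half planes are $\{y>b\}$ and $\{y<-b\}$, and the strip $\{|y|\le b+2,\ |x|\to\infty\}$ lies in neither.

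On this strip \cref{lem:parameterofdepth} gives nothing. The only information you have is that $f$ and $g$ both tend to the same ideal vertex, and that does not bound their distance. Your fallback via subharmonicity has the same hole. Any closed curve around $p_i$ crosses these strips, so $d(f,g)$ must be bounded there first. On an unbounded strip the maximum principle alone would also need growth control.

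\textbf{The missing ingredient} is control of how far $f$ and $g$ move in the vertical direction inside $V_k$. The paper supplies it as follows.
\begin{enumerate}
\item It takes the polygonal exhaustion $\Gamma(T_j)$ and bounds $d(f,g)$ on the horizontal edges exactly as you do.
\item Each vertical edge $\beta_k(T_j)$ has $q_f$-length about $2T_j$ and sits at depth $T_j$ in $V_k$. On it, \cref{lem:decay} gives $\|f_y\|\le Ce^{-cT_j}$, so $f$ moves by at most $2T_jCe^{-cT_j}$.
\item The map $g$ is handled on $\beta_k(T_j)$ using the boundedness of the difference of natural coordinates and the same decay for $q_g$.
\item The maximum principle for the subharmonic function $d(f,g)$ on the region between $\{|z|=R\}$ and $\Gamma(T_j)$ finishes the proof.
\end{enumerate}

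Within your own setup a cheaper repair also works. Join a strip point $p$ to a point $p'$ at depth $>2$ in $H_k$ by a $q_f$-vertical segment of bounded length. Far from the zeros, $|\eta_f|$ and $|\eta_g|$ are bounded by \cref{lem:decay}, so $\|df\|$ and $\|dg\|$ are bounded in the $|q_f|$-metric. Hence $d(f(p),f(p'))$ and $d(g(p),g(p'))$ are bounded, and the bound at $p'$ transfers to $p$. Either way, an estimate of this kind has to be added.

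A minor point about your closing remark: it is right only for $\re c_{-1}$. A purely imaginary discrepancy in $c_{-1}$ leaves $x_f-x_g$ bounded on each half plane. It is detected instead through the metric residue, as in \cref{prop:boundtoprincipal}.
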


We refer a complete geodesic in $\HH^2$ as a degenerate $2$-gon. The proposition is also valid in this case. 

\begin{proof}
    We assume both pole orders are even, otherwise we work on a double cover. Without loss of generality, we may assume the neighborhood is $E_i$. We apply the coordinate map $\eta_i$. Recall that $\eta_i(E_i)=\{|z|>1\}$. Suppose all zeroes of $\Hopf(f)$ and $\Hopf(g)$ are in $\{1<|z|<R\}$. We choose a polygonal exhaustion of $\{|z|>1\}$ with respect to $\Hopf(f)$. That is, a sequence of enlarging "polygon"s $\Gamma(T_j)$ outside $\{|z|=R\}$ whose edges are alternating $\Hopf(f)$-horizontal and $\Hopf(f)$-vertical segments with $\Hopf(f)$-length $2T_j+\text{error}$. Here the error terms are bounded by a universal constant that is independent of $T_j$. See \eqref{eq:horizontalO1} and \eqref{eq:verticalO1}. "Enlarging" means that $\Gamma(T_{j+1})$ lies outside $\Gamma(T_j)$ and the sequence $T_j$ tends to infinity increasingly. The $\Hopf(f)$-distance between $\Gamma(T_j)$ and any point $z_0 \in \{1<|z|<R\}$ satisfies
    \[
        T_j \leq d_{\Hopf(f)}\big(z_0,\Gamma(T_j)\big) \leq D_1T_j+D_2
    \]
    for absolute constants $D_1,D_2>0$ that is independent of $z_0$ and $T_j$. For the explicit construction, see \cref{app:polygonexhaustion}.

    The distance function between two harmonic maps are subharmonic. Once we show that there is a constant $M>0$ independent of $T_j$ such that
    \[
        \sup_{z\in\Gamma(T_j)}  d_{\HH^2}\big(f(z),g(z)\big) \leq M
    \]
    for all $j$, the distance between $f$ and $g$ has an upper bound on the region bounded by $\{|z|=R\}$ and $\Gamma(T_j)$ for all $j$, since the distance between $f$ and $g$ on the inner boundary $\{|z|=R\}$ is automatically bounded. The proof is then finished.

    We order the edges of $\Gamma(T_j)$ cyclically by $\alpha_1(T_j), \beta_1(T_j),\dots \alpha_m(T_j), \beta_m(T_j)$. Each $\alpha_i(T_j)$ is horizontal and each $\beta_i(T_j)$ is vertical. 
    
    We first show that the distance between $f$ and $g$ is bounded on $\alpha_k(T_j)$. It is contained in the $\Hopf(f)$-horizontal half plane $H_k$. Denote $l$ the $\Hopf(f)$-straight line
    \[
        l:(-\infty,+\infty) \to H_k,\quad l(t)= t+\ii (b_k+\varepsilon_k T_j).
    \]
    Then $\alpha_k(T_j)$ is the restriction of $l$ to a closed interval. The sign $\varepsilon_k \in \{1,-1\}$ describes the direction that $y$ goes to infinity in $H_k$. See \cref{app:polygonexhaustion}. Suppose $\gamma_k$ is the geodesic associated with $H_k$. Let $\xi_k$ and $\xi_{k-1}$ be the positive ideal endpoint and the negative ideal endpoint of $\gamma_k$ respectively. 
    
    Although $l$ is not $\Hopf(g)$-horizontal, the assumption that the principal parts of $f$ and $g$ are equal implies that $l$ is asymptotic to a $\Hopf(g)$-horizontal direction. Since $f$ and $g$ are asymptotic to the same ideal polygon in the normalized sense, points in $\alpha_k(T_j)$ are contained in a $\Hopf(g)$-horizontal half plane that is also associated with $\gamma_k$. We denote this $\Hopf(g)$-horizontal half plane by $\tilde{H}_k$. This is because the assumption on principal parts forces the difference of natural coordinate function with suitable sign is bounded. For $z \in \alpha_k(T_j)$, suppose it is represented by $(x^f,y^f)$ in the natural coordinate of $\Hopf(f)$, then \cref{lem:parameterofdepth} gives
    \[
        d_{\HH^2}\left(f(z),\gamma\left(2x^f+r_k^f\right)\right) \leq M^f_k.
    \]
    Similarly, let $(x^g,y^g)$ be the $\Hopf(g)$-natural coordinate representation of $z$. Applying the same lemma to $g$ and $\tilde{H}_k$, we have
    \[
        d_{\HH^2}\big(g(z),\gamma\left(2x^g+r^g_k\right)\big) \leq M^g_k.
    \]
    According to \cite[Lemma 3.7]{Gupta2021Harmonic}, the difference $x^f-x^g$ is bounded by an absolute constant $C>0$. Thus
    \[
        d_{\HH^2} \big(f(z),g(z)\big) \leq M^f_k+M^g_k+2C+\abs*{r^f_k-r^g_k}
    \]
    for all $z \in \alpha_k(T_j)$.
    
    It remains to deal with the vertical segments. Let $A_k$ be the common endpoint of $\alpha_k(T_j)$ and $\beta_k(T_j)$. We need only to show that $d_{\HH^2} (f(z),f(A_k)) \leq M$ and $d_{\HH^2}(g(z),g(A_k)) \leq M$ for all $z \in \beta_k(T_j)$ and a constant $M$. Like before, $\beta_k(T_j)$ is contained in the $\Hopf(f)$-vertical half plane $V_k$. Denote $l$ the $\Hopf(f)$-straight line
    \[
        l:(-\infty,+\infty) \to V_k,\quad l(t)= a_k+\tau_kT_j +\ii t.
    \]
    Then $\beta_k(T_j)$ is the restriction of $l$ to a closed interval. The sign $\tau_k \in \{1,-1\}$ describes the direction that $x$ goes to infinity in $V_k$. See also \cref{app:polygonexhaustion}. Suppose the $A_k$ is represented by $(x^f_0,y^f_0)$ and $z$ is represented by $(x^f,y^f)$ in $V_k$. Since both $A_k$ and $z$ are in the $\Hopf(f)$-vertical straight line $l$,
    \[
        x^f_0=x^f=a_k + \tau_kT_j.
    \]
    We then obtain
    \[
        d_{\HH^2} (f(z),f(A_k)) \leq \abs*{\int_{y^f}^{y^f_0} \norm*{f_y} \dd t } \leq 2(T_j+\text{error})C_f\e^{-c_fT_j},
    \]
    the right side has a uniform upper bound $M$ independent of $j$.

    Let $(x_0^g,y_0^g)$ be the $\Hopf(g)$-natural coordinate representation of $A_k$ and $(x^g,y^g)$ be that of $z$. Let $\hat{z}$ be the point correspond to $(x^g_0,y^g)$. Again by \cite[Lemma 3.7]{Gupta2021Harmonic}, the $y$-factors satisfy 
    \[
        |y^g-y^g_0| \leq 2T_j +\text{error} +C,
    \]
    while the $x$-factors satisfy
    \[
        |x^g-x^g_0| \leq C
    \]
    and
    \[
        |x^g_0 - (a_k+\tau_kT_j)| \leq C.
    \]
    By triangle inequality,
    \begin{align*}
        d_{\HH^2}\big(g(z),g(A_k)\big) &\leq d_{\HH^2}\big(g(z),g(\hat{z})\big) + d_{\HH^2}\big(g(\hat{z}),g(A_k)\big) \\
        &\leq C\left(2+C_g\e^{-c_gT_j}\right) + 2(T_j+\text{error}+C)C_g\e^{-c_gT_j},
    \end{align*}
    the right side also has a uniformly upper bound independent of $j$. This completes the proof.
\end{proof}

\begin{proposition}\label{prop:boundtoprincipal}
    Let $f$ and $g$ be two harmonic maps defined near $p_i$ such that $\Hopf(f)$ and $\Hopf(g)$ are meromorphic at $p_i$ with pole order at least $4$. If the distance between $f$ and $g$ are bounded near $p_i$, then the principal part of $f$ and $g$ at $p_i$ must agree. 
\end{proposition}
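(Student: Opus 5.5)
Following \cite[Proposition 3.9]{Gupta2021Harmonic}, I would compare the natural coordinates $w_f=\int\sqrt{\Hopf(f)}$ and $w_g=\int\sqrt{\Hopf(g)}$ along horizontal directions, using that bounded distance forces the images to track the same geodesic edges at the same speed. Work in $(E_i,\eta_i)$, passing to a double cover if the pole order is odd.

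\textbf{Step 1: equal leading orders and matching asymptotic data.} If $d_{\HH^2}(f,g)\le M$, then $f$ and $g$ have the same ideal boundary behaviour near $p_i$. By the decay estimate in \cref{app:decayestimate}, each $\Hopf(f)$-horizontal half plane is mapped exponentially close to a geodesic edge, with vertices reached along the $N_f+2$ horizontal asymptotic directions; likewise for $g$. Bounded distance forces the two polygons to coincide, hence equal numbers of vertices, so the pole orders agree. The directions along which vertices are approached must also agree, so the horizontal asymptotic directions coincide and the arguments of the leading coefficients match up to sign. Comparing the rates in \cref{lem:parameterofdepth} then fixes the moduli: along a horizontal ray $f$ moves at hyperbolic speed $2\,d x^f$ along the edge $\gamma_k$, so the parameter is $2x^f+r^f$, and similarly for $g$. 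Bounded distance gives $|2x^f(z)-2x^g(z)|\le C$ along rays deep in the half plane. Since $x^f\sim\re(A_fz^{N/2+1})$, this forces $A_f=A_g$ up to sign.

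\textbf{Step 2: all principal part coefficients.} Fix signs so that $\sqrt{\Hopf(f)}-\sqrt{\Hopf(g)}=(\delta_dz^d+\dots+\delta_0+\delta_{-1}z^{-1}+O(z^{-2}))\,dz$, and let $\delta_j$ be the highest nonzero coefficient with $j\ge0$. Then $w_f-w_g=\frac{\delta_j}{j+1}z^{j+1}+o(|z|^{j+1})$. Choose a direction inside some horizontal sector, deep in the half plane $H_k$, along which $\re(\delta_jz^{j+1})\to\pm\infty$. This is possible because the $2(j+1)\le N$ directions where it vanishes cannot cover the open sector intersected with every half plane; I would pick $k$ and a ray at angle strictly between asymptotic directions. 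Along this ray, \cref{lem:parameterofdepth} applied to both maps in half planes associated with the same edge $\gamma_k$ gives $d(f,g)\ge 2|x^f-x^g|-C\to\infty$, a contradiction. Hence $\delta_j=0$ for all $j\ge0$. For $\delta_{-1}$, the residue term gives $w_f-w_g\sim\delta_{-1}\log z$. Going once around $p_i$, the real part of the period $2\pi\ii\delta_{-1}$ must vanish: otherwise $x^f-x^g$ would be unbounded along a sequence of rays obtained by analytic continuation, or $\re(\delta_{-1}\log z)\to\infty$ along radial rays. Since $\delta_{-1}$ is real for the real setting, and in general one uses both $\re$ and $\im$ via horizontal and vertical half planes, this forces $\delta_{-1}=0$.

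\textbf{Main obstacle.} The hardest step is the residue term $\delta_{-1}$, because $\log z$ grows only logarithmically and is multivalued. I would handle it by applying \cref{lem:parameterofdepth} on two consecutive half planes $H_k$ and $H_{k+1}$ sharing the vertex $\xi_k$. The constants $r_k^f-r_k^g$ must be bounded on each half plane separately, and the real part of $\delta_{-1}\log z$ grows like $\re(\delta_{-1})\log|z|$ along a horizontal ray. This is unbounded unless $\re\delta_{-1}=0$. The imaginary part is handled by the vertical half planes, using the same uniform-constant care in \cite[Lemma 3.7]{Gupta2021Harmonic}.
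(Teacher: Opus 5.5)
Your treatment of the nonnegative-order coefficients and of $\re\delta_{-1}$ agrees with the paper. Bounded $d_{\HH^2}(f,g)$, combined with \cref{lem:parameterofdepth} in horizontal half planes sharing a limiting geodesic, makes $x^f-x^g$ bounded on an open sector. This kills every $\delta_j$ with $j\ge 0$. Writing $\re(\delta_{-1}\log z)=\re(\delta_{-1})\log|z|-\im(\delta_{-1})\arg z$, it also forces $\re\delta_{-1}=0$. Your vertex count in Step 1 is not needed, since equal pole order and leading coefficient already follow from this.

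The genuine gap is $\im\delta_{-1}$. Once $\re\delta_{-1}=0$, you have $x^f-x^g=-\im(\delta_{-1})\arg z+O(1)$, which is bounded on every horizontal and every vertical half plane. The part that does grow, $\im(w_f-w_g)=\im(\delta_{-1})\log|z|+O(1)$, is invisible to both maps: deep in any half plane each map depends, up to exponentially small error, only on the real part of its natural coordinate. So vertical half planes do not help.

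Your continuation argument also yields no contradiction. Since $f$ and $g$ are single valued, the real period $-2\pi\im\delta_{-1}$ that $x^f-x^g$ picks up around $p_i$ is simply absorbed into the per-half-plane constants $r_k^f,r_k^g$ of \cref{lem:parameterofdepth}. Chaining these constants around the puncture with only $O(1)$ estimates (\cref{lem:parameterofdepth}, \cite[Lemma 3.7]{Gupta2021Harmonic}) accumulates an $O(m)$ error per turn. That bounds $\im\delta_{-1}$ but does not force it to vanish. Finally, the appeal to $\delta_{-1}$ being real is unavailable, because the proposition has no reality hypothesis.

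What is missing is an exact relation, for each map separately, between its limiting polygon and its residue. The paper argues as follows. Once the principal parts agree up to a term in $\ii\R\,z^{-1}$, the horizontal asymptotic directions coincide. Hence the limiting geodesics of corresponding horizontal half planes of $f$ and $g$ agree and form a single (twisted) polygon $\PP$. By the argument of \cite[Proposition 2.29]{Gupta2021Harmonic}, the metric residue of $\PP$ equals twice the real part of the analytic residue $2\pi\ii c_{-1}$ of either principal part. Since $\re(2\pi\ii c_{-1})=-2\pi\im c_{-1}$, the two imaginary parts coincide.

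To avoid citing Gupta, you would have to chain the constants $r_k$ through the vertical half planes for each map individually, with $o(1)$ rather than $O(1)$ errors. This uses that each map is $o(1)$-close to constant along deep vertical segments, and that the horocyclic offsets at each vertex depend only on $\PP$. That chaining is precisely a proof of the metric residue identity.
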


\begin{proof}
    We assume only even pole order appears, otherwise we consider a double cover. 
    
    Choose a narrow angular sector neighborhood $S$ of $\infty$ such that as $z\in S$ approaches $\infty$, both $\re\left(\int^z \sqrt{\Hopf(f)}\right)$ and $\re\left(\int^z \sqrt{\Hopf(g)}\right)$ tend to $+\infty$, with a suitably chosen sign of square roots. 
    
    Choose a $\Hopf(f)$-horizontal half plane $H^f$ that contains $S$. Shrinking $S$ if necessary. For $z \in H^f$, let $(x^f(z),y^f(z))$ be its $\Hopf(f)$-natural coordinate representation. Let $\gamma$ be the geodesic associated to $H^f$. Although we have not made the assumption that $f$ is asymptotic to a polygon, the same computation as in \cref{lem:parameterofdepth} shows that this $\gamma$ exists, and the distance estimate in \cref{lem:parameterofdepth} is still valid. Similarly, we assign $\gamma'$ for a $\Hopf(g)$-horizontal half plane $H^g$ that contains $S$.
    
    Using the assumption that the distance between $f$ and $g$ is bounded near $p_i$, we obtain that $\gamma$ and $\gamma'$ has a same endpoint, denoted by $\xi$. We have
    \[
        d_{\HH^2} \left(\gamma\left(2x^f(z)+r^f\right), \gamma'\left(2x^g(z)+r^g\right) \right) \leq M'
    \]
    for some absolute constant $M'>0$. This implies that the difference $x^f-x^g$ is bounded. As a consequence, the coefficient of any nonnegative terms of the principal parts of $\Hopf(f)$ and $\Hopf(g)$ must be equal, and the coefficients of $z^{-1}$ term can only differ by a pure imaginary number. In particular, the pole order of $\Hopf(f)$ and $\Hopf(g)$, as well as their leading coefficients, are the same. 
    
    Thus the two differentials have the same horizontal asymptotic directions, and the limit geodesic for each horizontal half plane of $f$ and $g$ agree in a cyclic order. These geodesics form a twisted polygon $\PP$. By the same argument in \cite[Proposition 2.29]{Gupta2021Harmonic}, the metric residue of $\PP$ is twice the real part of the analytic residue of either principal parts, with compatible sign conventions. We remind that if the coefficient of $z^{-1}$ term is $c$, the analytic residue is defined to be $2\pi\ii c$ in Gupta's convention. This implies the imaginary part of the coefficient of $z^{-1}$ term of these two principal parts is equal. The proof is finished.
\end{proof}

\section{Comparison with horizontal catenoids}\label{sec:horizontalcatenoids}
Recall that, horizontal catenoids in $\HH^2 \times \R$ are complete embedding minimal annuli 
\[
    \Psi=(F,h): \C^* \to \HH^2 \times \R
\]
with finite total curvature. They are constructed by Pyo \cite{Pyo2011NewComplete} and Morabito–Rodríguez \cite{MorabitoRodriguez2012} independently. As stated in \cite[Section 2]{MartinMazzeoRodriguez2014}, a horizontal catenoid in $\HH^2 \times \R$ is isometric to a canonical one, denoted by $\Sigma$, that is symmetric across each of the three planes:
\[
    \{u=0\},\quad\{v=0\} ,\ \quad \{t=0\}. 
\]
Here $t$ is the parameter of $\R$, and $w=u+\ii v$ is the parameter of the unit disk model of $\HH^2$. Conversely, a Schoen-type theorem \cite[Main Theorem]{HauswirthNelliSaEarpToubiana2015} characterizes all horizontal catenoids in $\HH^2 \times \R$ as complete connected minimal surfaces in $\HH^2 \times \R$ with finite total curvature and two ends such that each of the ends is asymptotic to a vertical plane. We can choose a suitable coordinate such that
\begin{equation*}
    \left\{
        \begin{aligned}
            &F(\bar{z})=\overline{F(z)} \\
            &F(-\bar{z})=F(z) \\
            &F(1/\bar{z})=-\overline{F(z)}
        \end{aligned}
    \right.
\end{equation*}
and
\begin{equation*}
    \left\{
        \begin{aligned}
            &h(\bar{z})=h(z) \\
            &h(-\bar{z})=-h(z) \\
            &h(1/\bar{z})=h(z).
        \end{aligned}
    \right.
\end{equation*}
The surface $\Sigma$ has two ends, each of them is asymptotic to a vertical plane. Both ends have degree $0$ by \cite[Lemma 3.11; see also p. 231]{HauswirthNelliSaEarpToubiana2015}.

\begin{proposition}\label{prop:hopfofcatenoid}
    The Hopf differential of $F$ has the form
    \[
        \Hopf(F)= -a^2\left(1-\frac{1}{z^2}\right)^2\dd z^2
    \]
    for some real number $a>0$.
\end{proposition}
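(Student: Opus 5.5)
We propose to determine $\Hopf(F)$ by combining its holomorphicity and pole behaviour with the three symmetries listed above. First, since $\Psi=(F,h)$ is a conformal minimal immersion into $\HH^2\times\R$, each factor is harmonic. Conformality gives $\Hopf(F)+(\dd h)^2_{(2,0)}=0$, that is,
\[
    \Hopf(F)=-(h_z)^2\dd z^2 ,
\]
so $\Hopf(F)$ is holomorphic on $\C^*$ with a global square root $\ii h_z\dd z$. Here $h_z\dd z$ is a holomorphic $1$-form on $\C^*$.

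Second, we show that $\Hopf(F)$ is meromorphic at $0$ and $\infty$. Finite total curvature, together with the Hauswirth--Nelli--Sa Earp--Toubiana description of the ends (each end is asymptotic to a vertical plane and has degree $0$), gives that the Hopf differential of $F$ extends meromorphically with a pole of order $4$ at each end in the coordinate $z$. Concretely, the horizontal-plane asymptotics say that near $\infty$, $h$ behaves like the height function of a vertical plane, which is modelled by $\im$ of a natural coordinate. Hence $h_z\dd z \sim c\,\dd z$ at $\infty$, a pole of order $2$ for the $1$-form; the same holds at $0$ by the symmetry $z\mapsto 1/\bar z$. Alternatively, one can invoke \cref{prop:pole}-type reasoning: $F$ stays at bounded distance from a degenerate Scherk map with constant Hopf differential. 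We therefore write
\[
    h_z\dd z=\Big(\sum_{n=-2}^{0} b_n z^{n}\Big)\dd z
\]
as a Laurent polynomial. Using the degree-$0$ condition (and double poles of the $1$-form at $0$ and $\infty$), we get $h_z=b_0+b_{-1}z^{-1}+b_{-2}z^{-2}$.

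Third, we impose the symmetries. Since $h$ is real and single valued on $\C^*$, the $z^{-1}$ term has residue $b_{-1}$ with $\re\oint h_z\dd z$ giving the period; single-valuedness together with $h(\bar z)=h(z)$ forces $b_{-1}=0$. Indeed, $h(\bar z)=h(z)$ gives $\overline{h_z(\bar z)}=h_z(z)$, so all $b_n$ are real, and a real $b_{-1}$ produces a nonzero period $2\pi\ii b_{-1}$ of $\dd h=2\re(h_z\dd z)$, whose real part must vanish. We should double check this: a real $b_{-1}$ contributes $\re(2\pi\ii b_{-1})=0$. If that fails to kill $b_{-1}$, we instead use $h(-\bar z)=-h(z)$, which gives $\overline{h_z(-\bar z)}=h_z(z)$ (after sign tracking), so only even-index coefficients survive with real values and $b_{-1}=0$. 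Finally, $h(1/\bar z)=h(z)$ gives $h_z(z)=\overline{h_{z}(1/\bar z)}\cdot(-\bar z^{-2})$ conjugated appropriately, which yields $b_{-2}=-b_0$. Thus $h_z=b_0(1-z^{-2})$ with $b_0\neq0$ real, since $F$ is nonconstant. Setting $a=|b_0|$ gives $\Hopf(F)=-a^2(1-z^{-2})^2\dd z^2$.

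The main obstacle is the second step: rigorously establishing that $h_z\dd z$ has exactly a double pole (order $\le 2$ and no essential singularity) at each end. We would try first to extract this from finite total curvature via the Osserman-type result for minimal surfaces in $\HH^2\times\R$ quoted in \cite{HauswirthNelliSaEarpToubiana2015}, where finite total curvature forces $\Hopf$ to be a polynomial-type differential in the end coordinate, with degree determined by the end degree $0$.
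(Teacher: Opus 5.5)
Your proposal is correct and follows the paper's proof step for step. Conformality gives $\Hopf(F)=-h_z^2\dd z^2$, and \cite{HauswirthNelliSaEarpToubiana2015} gives double poles of $h_z\dd z$ at $0$ and $\infty$. The symmetries of $h$ then force real coefficients, $b_{-1}=0$ (via $h(-\bar z)=-h(z)$; as you noticed, the period argument gives nothing beyond reality) and $b_{-2}=-b_0$.

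One small correction: $b_0\neq 0$ should be justified by the pole having exact order $2$, or equivalently by $h$ being nonconstant, not by $F$ being nonconstant.
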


\begin{proof}
    By \cite[Eq. (2)]{HauswirthNelliSaEarpToubiana2015}, the square root of $\Hopf(F)$ has a pole of order $2$ at infinity. Thus the differential
    \[
        h_z\dd z=\ii \sqrt{\Hopf(F)}
    \]
    has a pole of order $2$ at infinity. Similarly, it has a pole of order $2$ at $0$. Thus we may write
    \[
        h_z\dd z= \left(a+\frac{b}{z}+\frac{c}{z^2}\right) \dd z.
    \]
    The symmetries of $h$ imply:
    \begin{itemize}
        \item All of $a$, $b$ and $c$ are real;
        \item $b=0$;
        \item $a=-c$.
    \end{itemize}
    Thus
    \[
        \Hopf(F)= - h_z^2\dd z^2= -a^2\left(1-\frac{1}{z^2}\right)^2\dd z^2.
    \]
\end{proof}

Note that $h$ is given by
\[
    h(z) = 2a\re\left(z+\frac{1}{z}\right).
\]
Thus the parameter $a$ is exactly $1/8$ times the distance between $\Psi(1)$ and $\Psi(-1)$. Moreover, $a$ is determined by the pullback metric $\Psi^*(\sigma+\dd t^2)$.

\begin{proposition}\label{prop:intrinsic}
    $a$ is determined by the pullback metric $\Psi^*(\sigma+\dd t^2)$.
\end{proposition}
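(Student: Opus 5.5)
We will show that $a$ can be recovered from purely intrinsic data of the induced metric $g:=\Psi^*(\sigma+\dd t^2)$. Since $\Psi$ is conformal, $g=\lambda\abs*{\dd z}^2$ for a conformal factor $\lambda$. Conformality of $\Psi=(F,h)$ gives $\Hopf(F)+h_z^2\dd z^2=0$, which is consistent with $\Hopf(F)=-h_z^2\dd z^2$ from \cref{prop:hopfofcatenoid}. Moreover, $\lambda=2(\norm*{\partial F}^2+\norm*{\bar\partial F}^2)+2\abs*{h_z}^2$ in the normalization of \cref{def:holomorphic-density}, up to the paper's constants.

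The approach is to express a canonical intrinsic quantity of $(\C^*,g)$ in terms of $a$ alone. We will use the isometry group: the three reflections $z\mapsto\bar z$, $z\mapsto -\bar z$, $z\mapsto 1/\bar z$ are isometries of $g$, by the listed symmetries of $F$ and $h$. Their fixed-point sets are, respectively:
\begin{itemize}[label={}]
    \item the real axis;
    \item the imaginary axis;
    \item the unit circle.
\end{itemize}
The points $z=\pm1$ are the intersections of the real axis with the unit circle.

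First, we show that the conformal structure determines $h$ up to sign and additive constant, because $h$ is the harmonic function with $h_z\dd z=\ii\sqrt{-\Hopf(F)}$. Then we use the paper's remark that $a$ equals $1/8$ of $\abs*{h(1)-h(-1)}=8a$. The cleanest intrinsic statement is the following. Any isometry $\Theta$ of $(\C^*,g)$ is conformal or anticonformal on $\C^*$, so it is a Möbius map fixing $\{0,\infty\}$ setwise. The function $h$ is characterized intrinsically by the fact that $\Psi^*\dd t^2$ is the part of $g$ coming from the vertical factor. To avoid needing extrinsic data, we will instead compute the Gauss curvature $K_g$. The Gauss equation gives $K_g=K_{\mathrm{sec}}+\det(\text{II})$, and minimality lets us express $K_g$ through $\lambda$, $\abs*{h_z}$ and the angle function.

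A more robust route is to use the fact that $\lambda$ is bounded below by $2\abs*{h_z}^2=2a^2\abs*{1-z^{-2}}^2$ and is asymptotic to it near the ends, since $F$ becomes asymptotic to a geodesic parametrization with energy density matching $h$. Consequently, the $g$-length of the closed geodesic given by the fixed circle of $z\mapsto1/\bar z$ (the unit circle, the neck of the catenoid) and the $g$-distance along the real axis from $1$ to $-1$ are intrinsic.

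The key step is to identify an intrinsic quantity that equals a fixed function of $a$. We will use the asymptotics at the ends. In the coordinate $\zeta=z+1/z$, the metric $g$ is asymptotic, as $\abs*{z}\to\infty$, to a flat metric $4a^2\abs*{\dd\zeta}^2\cdot(\text{const})$ plus exponentially decaying terms. These decaying terms are controlled by the decay estimate of \cref{lem:decay}, which gives $\eta\to0$, so $\norm*{\partial F}^2\to a^2\abs*{1-z^{-2}}^2$ and $\norm*{\bar\partial F}^2\to a^2\abs*{1-z^{-2}}^2$. Hence the end is intrinsically asymptotic to a flat strip. Its \emph{width}, meaning the limit of the $g$-lengths of the loops $\abs*{z}=R$ as $R\to\infty$ minus their linear growth, or better the conformal-metric data, determines $a$.

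Concretely, we will show that $\lambda\abs*{\dd z}^2-8a^2\abs*{1-z^{-2}}^2\abs*{\dd z}^2$ decays, which gives $\lim_{R\to\infty}L_g(\{\abs*{z}=R\})/R=2\pi\sqrt8\,a$. Since any isometry preserves this normalized growth once the conformal coordinate is fixed up to the symmetry-respecting Möbius maps $z\mapsto cz^{\pm1}$, we must also argue that $c=1$ is forced. This follows because the neck circle $\abs*{z}=1$ is the unique fixed circle of an isometric reflection separating the ends.

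The main obstacle is making this rigorous: we need to show that two canonical catenoids with isometric induced metrics produce the same normalized coordinate, so that the growth rate, and hence $a$, is well defined. We plan to handle this by noting that the isometry, being conformal on $\C^*$ and respecting the symmetric configuration, must be $z\mapsto z^{\pm1}$ or a reflection. Each of these preserves the asymptotic ratio, so $a$ is determined.
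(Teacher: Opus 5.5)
Your route differs from the paper's, and it works once the constants are fixed.

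\textbf{Comparison with the paper.} The paper argues at the single point $z=1$. There $h_z(1)=0$, so the tangent plane is horizontal and $\nu(1)^2=1$. The identity $\nabla_{T'}\nabla h=\nu S(T')$, the Gauss equation and $\det S=-\tfrac12\norm{S}^2$ then give $K(1)=-1-2a^2\norm{\nabla^2 f}^2$ with $f=\re(z+1/z)$. Since $\dd f(1)=0$, this Hessian is the Euclidean one, so $a$ is read off from $K(1)$ and $\lambda(1)$. That uses only the $2$-jet of $g$ at one point and the explicit form of $h$, with no analytic input.

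You instead read $a$ off the zeroth-order asymptotics of $g$ at an end. This needs the exponential decay of \cref{lem:decay}, but it gives the simpler formula $a=\tfrac12\lim_{z\to\infty}\sqrt{\lambda(z)}$. It also extends to a coordinate-free statement, as explained below.

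\textbf{Your constants are wrong.} Write $H=\abs{h_z}^2\e^{2\eta}$ and $L=\abs{h_z}^2\e^{-2\eta}$, where $\eta$ is the sinh-Gordon function in the natural coordinate $\zeta=\ii a(z+1/z)$ of $\Hopf(F)$. Conformality gives
\[
    \lambda=H+L+2\abs{h_z}^2=4\abs{h_z}^2\cosh^2\eta .
\]
Hence $\lambda\geq 4\abs{h_z}^2$, and $\lambda$ is asymptotic to $4\abs{h_z}^2$, not $2\abs{h_z}^2$. The difference that decays is $\lambda-4a^2\abs{1-z^{-2}}^2=4\abs{h_z}^2\sinh^2\eta$, not $\lambda-8a^2\abs{1-z^{-2}}^2$. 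Consequently $L_g(\{\abs{z}=R\})/R\to4\pi a$, rather than $2\pi\sqrt8\,a$.

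To apply \cref{lem:decay} with radii tending to infinity, use that $z\mapsto z+1/z$ is injective on $\{\abs{z}>1\}$ with image the complement of $[-2,2]$. So $\zeta$-discs of radius about $a\abs{z}$ fit inside the end.

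\textbf{The isometry discussion.} It is not needed for the statement as the paper means it. \cref{prop:isometrybyhopf} refers to the metric in the chosen coordinate, and the paper's proof uses that coordinate too.

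If you want the stronger coordinate-free version, two things must change. Your claim that the unit circle is the unique fixed circle of an isometric reflection separating the ends is only asserted. Also, what you need is $\abs{c}=1$, not $c=1$.

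The same asymptotics settle both points directly. The symmetry $z\mapsto 1/\bar z$ gives $\lambda\sim4a^2\abs{z}^{-4}$ at $0$. Suppose $z\mapsto cz^{\pm1}$, possibly composed with conjugation, is an isometry from the metric with parameter $a_1$ to the one with parameter $a_2$. Comparing the two ends yields $a_1^2=a_2^2\abs{c}^{2}$ and $a_1^2=a_2^2\abs{c}^{-2}$. Hence $\abs{c}=1$ and $a_1=a_2$.

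Finally, drop the exploratory remarks you never use. For instance, the real axis from $1$ to $-1$ runs through the puncture $0$, so that distance is not an invariant.
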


\begin{proof}
    Denote $g=\Psi^*(\sigma+\dd t^2)$. By Gauss equation, the Gauss curvature of the metric $g$ at $1$ satisfies
    \[
        K(1)= -1 + \det(S), 
    \]
    because the tangent plane of $\Psi(1)$ is horizontal and thus has sectional curvature -1. Here $S$ is the shape operator.

    Decompose 
    \begin{equation}\label{eq:tangent-normal}
        \frac{\partial}{\partial t} = \Psi_*(T) + \nu N
    \end{equation}
    into a tangent part $\Psi_*(T)$ and a normal part $\nu N$. Here $N$ is the unit normal vector. We have $\norm*{T}^2 + \nu^2=1$. Take inner product with $\Psi_*(T')$, we obtain
    \[
        T'(h) = \inner{T}{T'}.
    \]
    Thus $T= \nabla h$.
    Take covariant derivative in $\HH^2 \times \R$ of both sides of \eqref{eq:tangent-normal}. Comparing the tangent part, we obtain
    \[
        \nabla _{T'} \nabla h = \nu S(T').
    \]
    Here we have used the Weingarten formula and the fact that $\partial/\partial t$ is parallel in $\HH^2 \times \R$. Note that $\nabla h$ vanishes at $1$, therefore $\nu(1)=1$ and 
    \[
        \norm*{\nabla^2 h}^2 (1) = \norm *{S}^2 (1).
    \]
    Since $\Psi$ is minimal, $S$ is traceless and thus $\det(S)=-\frac{1}{2} \norm *{S}^2$. We then have
    \[
        K(1)=-1-\frac{1}{2}\norm*{\nabla ^2 h}^2= -1 - 2a^2\norm*{\nabla ^2 f}^2, 
    \]
    where 
    \[
        f(z) = \re\left(z+\frac{1}{z}\right)
    \]
    is a fixed function. Since $\dd f$ is $0$ at $1$, the Hessian of $f$ at $1$ agrees with the usual Hessian in $\R^2$, which is not $0$ at $1$. The proof is then finished.
\end{proof}

Let $(E,\Phi)$ denote the $SL(2,\R)$-Higgs bundle associated with the harmonic map $F$. The Picard group of $\C^*$ is trivial, thus $E$ is the trivial vector bundle of rank $2$. The Higgs field $\Phi$ has the form
\[
    \Phi = \begin{pmatrix}
            0 & \alpha \\
            \beta & 0
            \end{pmatrix} \dd z.
\]
Here $\alpha$ and $\beta$ are holomorphic, with
\[
    \Hopf(F)=4\alpha\beta \dd z^2
\]
and
\[
    J_F=4(\norm*{\alpha}^2 - \norm*{\beta}^2).
\]

\begin{proposition}\label{prop:higgsfield}
    The Higgs field $\Phi$ has the form
    \[
    \Phi = \begin{pmatrix}
            0 & u\left(1+\frac{1}{z}\right)^2 \\
            v\left(1-\frac{1}{z}\right)^2 & 0
            \end{pmatrix} \dd z,
    \]
    or the transposition of that matrix. Here $u,v$ are holomorphic, they never vanish on $\C^*$ and satisfy $4uv=-a^2$. Note that choose different $u,v$ and transpose the matrix do not change the pullback metric.
\end{proposition}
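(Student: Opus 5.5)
The plan is to derive the shape of $\Phi$ from the factorization $\Hopf(F)=4\alpha\beta\,\dd z^2$ and the explicit Hopf differential in \cref{prop:hopfofcatenoid}. That proposition gives
\[
4\alpha\beta = -a^2\left(1-\frac1{z^2}\right)^2 = -a^2\frac{(z-1)^2(z+1)^2}{z^4},
\]
so the holomorphic functions $\alpha,\beta$ on $\C^*$ have zeros only at $\pm1$. Together these zeros have total order $2$ at $z=1$ and total order $2$ at $z=-1$.

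The first step is to decide how these zeros split between $\alpha$ and $\beta$, using the Jacobian $J_F=4(\abs{\alpha}^2-\abs{\beta}^2)$. The zero set of $\alpha$ (resp.\ $\beta$) is exactly where $F$ fails to be an immersion in the orientation-preserving (resp.\ reversing) sense. I would use the symmetries $F(\bar z)=\overline{F(z)}$, $F(-\bar z)=F(z)$, $F(1/\bar z)=-\overline{F(z)}$. The map $z\mapsto -\bar z$ is orientation reversing on the domain while $F$ is unchanged, so it gives $J_F(-\bar z)=-J_F(z)$. Hence $J_F$ vanishes on the imaginary axis, and the two half-planes $\{\re z>0\}$ and $\{\re z<0\}$ carry opposite signs of $J_F$.

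I then expect $F$ to be a local diffeomorphism on each open half-plane away from its boundary. The projection of the catenoid to $\HH^2$ is locally injective where the surface is not vertical, and the vertical set should be the imaginary axis. At $z=1$ the Hopf differential vanishes to order $2$. Since $\abs{\alpha}^2\neq\abs{\beta}^2$ there (as $J_F\neq0$ off $\ii\R$), exactly one of $\alpha,\beta$ vanishes at $1$, and it vanishes to order $2$. Say $J_F>0$ on the right half-plane, so $\abs{\alpha}>\abs{\beta}\ge0$ at $1$. Then $\beta$ has the double zero at $1$, and by the sign flip $\alpha$ has the double zero at $-1$. This gives
\[
\alpha = u\left(1+\frac1z\right)^2,\qquad \beta=v\left(1-\frac1z\right)^2
\]
with $u,v$ nonvanishing and holomorphic on $\C^*$. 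Comparing with $4\alpha\beta$ then forces $4uv=-a^2$. The other orientation choice gives the transposed matrix.

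The main obstacle is justifying that $J_F\neq0$ off the imaginary axis, i.e.\ that the vertical points of the catenoid are exactly $\ii\R$. If that is hard to prove directly, I would argue through the normal: $J_F$ is a nonzero multiple of $\nu$, the vertical component of the unit normal in the proof of \cref{prop:intrinsic}. Embeddedness together with the symmetry across $\{u=0\}$ in the image makes $\{u=0\}\cap\Sigma$ a vertical symmetry curve, along which $\nu=0$. Nonvanishing elsewhere would follow from the Alexandrov-reflection description of the halves as graphs over vertical planes, in the constructions of Pyo and Morabito--Rodr\'iguez. This step would be borrowed from those papers rather than reproved.

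Finally, the pullback metric is $\sigma$-dependent only through $\abs{\alpha}^2+\abs{\beta}^2$ and $\alpha\beta$, which are invariant under the changes $(u,v)\mapsto(\lambda u,\lambda^{-1}v)$ coming from gauge. Transposition only swaps $\alpha$ and $\beta$. So neither changes the metric.
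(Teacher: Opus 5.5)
Your outline matches the paper's. You factor $\Hopf(F)=4\alpha\beta\,\dd z^2$, use $F(-\bar z)=F(z)$ to get $J_F(-1)=-J_F(1)$, and conclude that the double zeros at $1$ and $-1$ sit in different off-diagonal entries. But the one nontrivial input, $J_F(\pm1)\neq0$, is left unproved. You replace it by the much stronger claim that $J_F\neq0$ off $\ii\R$, propose to import that from Pyo and Morabito--Rodr{\'i}guez, and your sketch for it does not work:
\begin{enumerate}
\item The curve where $\nu=0$ comes from the symmetry $z\mapsto-\bar z$, i.e.\ reflection across $\{t=0\}$. It does not come from the vertical plane $\{u=0\}$ of the disk coordinate $w=u+\ii v$. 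That plane contains the image of the unit circle, in particular $\Psi(1)$, where the tangent plane is horizontal.
\item Halves that are graphs over vertical planes only have a nonzero \emph{horizontal} normal component, which says nothing about $\nu$.
\end{enumerate}
The missing observation is local and elementary. Conformality of $\Psi$ gives $\Hopf(F)=-h_z^2\,\dd z^2$, so $h_z(\pm1)=0$ (explicitly $h=2a\re(z+1/z)$), hence $\dd h=0$ at $\pm1$. Since $\Psi=(F,h)$ is an immersion, $\dd F$ must be injective at $\pm1$, so $J_F(\pm1)\neq0$. In your own terms, $\nu(\pm1)^2=1-\norm{\nabla h}^2=1$. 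This is exactly how the paper argues, and with it the rest of your splitting argument goes through.

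Your justification of the final remark (that the choice of $u,v$ and transposition do not change the pullback metric) is also incorrect. The freedom in $(u,v)$ is $(u,v)\mapsto(fu,f^{-1}v)$ with $f$ an arbitrary nonvanishing holomorphic function on $\C^*$. Under this change $\abs{\alpha}^2+\abs{\beta}^2$ is not invariant unless $\abs{f}\equiv1$. The pullback metric depends on $\alpha,\beta$ through their norms in the harmonic metric, so the claim only makes sense once you know which harmonic metric accompanies each choice. The paper closes this with a uniqueness argument:
\begin{enumerate}
\item The gauge $\mathrm{diag}(1,f)$ sends the pairing $C$ to $fC$.
\item Rescaling the transported harmonic metric by $\abs{f}^{-1}$ keeps it harmonic (since $\log\abs{f}$ is harmonic) and makes it $C$-compatible. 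By \cite[Theorem 1.9]{LiMochizuki2023} it is then the harmonic metric for the new Higgs field.
\item $\tr(\Phi\Phi^{*H})$ is unchanged under scalar rescaling of $H$.
\item Transposition is the gauge by the swap matrix, which preserves $C$.
\end{enumerate}
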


\begin{proof}
    Since $\Psi$ is a conformal minimal embedding and $\Hopf(F)$ vanishes at $\pm1$, the Jacobian of $F$ satisfies $J_F(\pm1) \neq 0$. The symmetry $F(-\bar{z})=F(z)$
    forces $J_F(1)=-J_F(-1)$. So it is impossible to have
    \[
        \Phi = \begin{pmatrix}
                0 & q \\
                1 & 0
                \end{pmatrix}\dd z.
    \]
    Meanwhile, $\Phi$ cannot be a zero matrix at any point. Thus it must have the form
    \[
        \Phi = \begin{pmatrix}
                0 & u\left(1+\frac{1}{z}\right)^2 \\
                v\left(1-\frac{1}{z}\right)^2 & 0
                \end{pmatrix} \dd z
    \]
    with holomorphic non-vanishing $u,v$ such that $4uv=-a^2$, or the transpose of those matrices. All the choices of $\Phi$ lie in the same gauge equivalent class. Indeed, we can show this by applying the conjugate transformation given by the following  
    \[
         G = \begin{pmatrix}
                1 & 0 \\
                0 & f
                \end{pmatrix}, \qquad
        G'=  \begin{pmatrix}
                0 & 1 \\
                1 & 0
                \end{pmatrix}
    \]
    for holomorphic non-vanishing $f$. Note that $E$ is equipped with a natural non-degenerate symmetric pairing $C$ given by
    \[
        C = \begin{pmatrix}
                0 & 1 \\
                1 & 0
                \end{pmatrix}.
    \]
    By \cite[Theorem 1.9]{LiMochizuki2023}, there is a unique harmonic metric $H$ on $E$ that is compatible with $C$. Applying the gauge transformation given by $G$, the symmetric pairing becomes $fC$, and the transported harmonic metric is $\tilde{H}$. Since $\log|f|$ is harmonic, the rescaling metric $|f|^{-1}\tilde{H}$ is still harmonic and is compatible with $C$. Since the trace of $\Phi\Phi^{*H}$ does not change under the rescaling of metric, the pullback of the hyperbolic metric keeps unchanged, and so is $\Psi^*(\sigma+\dd t^2)$. The gauge transformation given by $G'$ does not change $C$, so the conclusion follows directly.
\end{proof}

Combining \cref{prop:intrinsic} and \cref{prop:higgsfield}, we get

\begin{proposition}\label{prop:isometrybyhopf}
    The parameter $a$ determines and is determined by the pullback metric by $\Psi$ on this chosen coordinate.
\end{proposition}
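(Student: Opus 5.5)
The plan is to prove the two directions separately, using \cref{prop:hopfofcatenoid} to pass from the metric to $a$ and \cref{prop:higgsfield} to pass from $a$ to the metric.

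\emph{The metric determines $a$.} This is exactly \cref{prop:intrinsic}. In the chosen coordinate (the one realizing the symmetries $F(\bar z)=\overline{F(z)}$, $F(-\bar z)=F(z)$, $F(1/\bar z)=-\overline{F(z)}$), the point $z=1$ is a fixed point of all three symmetries. There $\nabla h=0$, and the Gauss equation gives
\[
K(1)=-1-2a^2\norm*{\nabla^2 f}^2(1),
\]
where $f=\re(z+1/z)$ is a fixed function. Its Hessian at $1$ is nonzero and depends on the metric only through its conformal factor at $1$, so it is known once $g$ is known. Hence $a^2$, and therefore $a>0$, can be read off from $K(1)$ and the conformal factor of $g$ at $1$. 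I will spell out that the point $1$ and the function $f$ are intrinsic to the chosen coordinate, so no hidden choice enters.

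\emph{$a$ determines the metric.} Write
\[
g=\Psi^*(\sigma+\dd t^2)=F^*\sigma+\dd h^2.
\]
The second summand is determined by $a$, since $h=2a\re(z+1/z)$. For the first, \cref{prop:hopfofcatenoid} fixes
\[
\Hopf(F)=-a^2\left(1-\tfrac{1}{z^2}\right)^2\dd z^2.
\]
\cref{prop:higgsfield} then says the Higgs field has the stated form with $4uv=-a^2$, up to gauge and transposition, and all such choices give the same pullback metric. The remaining point is that the harmonic metric is uniquely determined by the Higgs bundle, which follows from \cite[Theorem 1.9]{LiMochizuki2023}: there is a unique harmonic metric compatible with the symmetric pairing $C$. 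Since $F^*\sigma$ is expressed through $\tr(\Phi\Phi^{*H})$, it is determined by $(E,\Phi,C)$ and hence by $a$ alone. Equivalently, $F^*\sigma=2(H+L)\,|\dd z|^2$ scaled suitably, with $H,L$ governed by the Bochner equation, and uniqueness of the solution pins it down. Adding $\dd h^2$ recovers $g$.

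The main obstacle is justifying that the harmonic metric $H$ coming from $F$ is the one compatible with $C$ that \cite[Theorem 1.9]{LiMochizuki2023} singles out. Uniqueness there may require a completeness or growth condition on $\C^*$ that $F$ must be checked to satisfy. My first attempt is to note that the harmonic metric induced by an equivariant harmonic map to $\HH^2\cong SL(2,\R)/SO(2)$ is automatically compatible with the real structure $C$, and that the Li–Mochizuki uniqueness needs no boundary condition when $\Phi$ is generically regular semisimple, which holds here since $\Hopf(F)\not\equiv0$.
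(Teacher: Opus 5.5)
Your proposal follows the paper's proof. The paper also combines \cref{prop:intrinsic} (the Gauss equation at $z=1$) for one direction with \cref{prop:higgsfield} (the Higgs field forced by \cref{prop:hopfofcatenoid}, gauge equivalence of all admissible choices, and the unique $C$-compatible harmonic metric of \cite[Theorem 1.9]{LiMochizuki2023}), together with $h=2a\re(z+1/z)$, for the other. The uniqueness point you flag is exactly what the paper takes from that citation, but your tentative justification claims too much: generic regular semisimplicity alone does not force uniqueness (on the unit disk, $\dd z^2$ is the Hopf differential both of a harmonic map into a geodesic and of a harmonic diffeomorphism onto its image, which gives two distinct $C$-compatible harmonic metrics), so the argument must use the behaviour at the two ends of $\C^*$, for instance that both poles of $\Hopf(F)$ have order $4$, so the flat metric $\abs*{\Hopf(F)}$ is complete there.
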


For a horizontal catenoid $\Psi=(F,h)$, the Hopf differential of $F$ is 
\[
    \Hopf(F)=-a^2\left(1-\frac{1}{z^2}\right)^2 \dd z^2.
\]
Suppose it is asymptotic to the vertical plane $\gamma_1 \times \R$ near the end at infinity. We choose $\omega_1$ to be the harmonic map from $\C$ to $\HH^2$ that has image $\gamma_1$ with constant Hopf differential $-a^2\dd z^2$. We require further that $\omega_1$ and $F$ is asymptotic to $\gamma_1$ in the normalized sense by considering $\tilde{\omega}_1(z)= \omega_1(-z)$ instead of $\omega_1$ if necessary. We can assign $\gamma_2$ and $\omega_2$ at $0$ similarly.

By \cref{prop:principaltobounded}, the hyperbolic distance between $F$ and $\omega_1$ is bounded near infinity, and the hyperbolic distance between $F$ and $\omega_2$ is bounded near $0$. Let $X= \C\mathbb{P}^1,\quad D= \{0,\infty\}$. Let $h$ be the harmonic map constructed in \cref{thm:main} with prescribed asymptotic data $\omega_1$ at infinity and $\omega_2$ at $0$. By \cref{rem:absolute-constant}, we have
\[
    d_{\HH^2}(F(z),h(z)) \leq M
\]
for all $z \in \C^*$ and an absolute constant $M>0$. Since the image of $F$ is not contained in a geodesic, \cref{prop:unique} shows that $F=h.$

\appendix
\crefalias{section}{appendix}

\section{A decay estimate for solutions of sinh-Gordon equation}\label{app:decayestimate}

In this section, we give an exponential decay estimate of the solution of sinh-Gordon equation
\[
    \Delta \mu = 2\sinh(2\mu).
\]

\begin{lemma}\label{lem:decay}
    Suppose $\mu$ is smooth on a neighborhood of $\overline{B(0,R)}$ with $R> 2$ and it satisfies
    \[
        \Delta \mu = 2\sinh(2\mu).
    \]
    Then
    \[
        |\mu(0)| + |\nabla\mu(0)| \leq A \e ^{-cR}
    \]
    for some absolute constants $A,c>0$.
\end{lemma}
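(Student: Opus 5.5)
We plan to prove the estimate in two stages: first a uniform sup bound on $\abs{\mu}$ on a slightly smaller ball, then exponential decay at the center by comparison with an explicit supersolution, and finally the gradient bound from interior elliptic estimates.

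\emph{Stage 1 (a priori bound, Keller--Osserman type).} We use that $\mu$ and $-\mu$ play symmetric roles, since $\sinh$ is odd. Because $\Delta\mu = 2\sinh(2\mu)\geq \e^{2\mu}-1$ wherever $\mu\geq 0$, a large positive maximum is impossible deep inside a ball. Concretely, on any ball $B(x_0,\rho)$ we compare $\mu$ with the radial blow-up supersolution $W_\rho(x)=\log\frac{2\rho}{\rho^2-\abs{x-x_0}^2}+C_0$. This satisfies $\Delta W_\rho\leq 2\sinh(2W_\rho)$ for a suitable absolute $C_0$, since $\Delta\log\frac{2\rho}{\rho^2-r^2}=\frac{4\rho^2}{(\rho^2-r^2)^2}$ and $\e^{2W}=\e^{2C_0}\frac{4\rho^2}{(\rho^2-r^2)^2}$. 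It tends to $+\infty$ on $\partial B(x_0,\rho)$. The maximum principle applied to $\mu-W_\rho$ then gives $\mu\leq W_\rho$: at a positive interior maximum we would have $\Delta(\mu-W_\rho)\leq 0$, while $\sinh$ is increasing, forcing $\Delta\mu>\Delta W_\rho$, a contradiction. The same argument applied to $-\mu$ bounds $\mu$ from below. Taking $\rho=1$ yields $\abs{\mu}\leq C_1$ on $B(0,R-1)$ with $C_1$ absolute.

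\emph{Stage 2 (exponential decay).} On $B(0,R-1)$ we have $\abs\mu\leq C_1$, so $\sinh(2\mu)/\mu\geq 2$, and $v=\mu^2$ (or $\abs\mu$ in the viscosity sense) satisfies $\Delta v\geq 2\mu\Delta\mu\geq 8v$. We compare with the supersolution $\psi(x)=C_1^2\,\frac{I_0(\sqrt8\abs{x})}{I_0(\sqrt8(R-1))}$, which solves $\Delta\psi=8\psi$ and dominates $v$ on $\partial B(0,R-1)$. The maximum principle gives $v(0)\leq C_1^2/I_0(\sqrt8(R-1))\leq A\e^{-\sqrt8 R}$. A cruder alternative is $\psi=C\sum_{\pm}\cosh(\sqrt8 x_j)/\cosh(\sqrt8 (R-1)/\sqrt2)$, which also works. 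Running the same argument on every ball $B(y,R/2)$ with $y\in B(0,1)$ gives $\abs\mu\leq A\e^{-cR}$ on $B(0,1)$, with $c=\sqrt2$ say.

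\emph{Stage 3 (gradient).} On $B(0,1)$ we have $\abs{\Delta\mu}=2\abs{\sinh 2\mu}\leq C\abs\mu\leq CA\e^{-cR}$. Standard interior $W^{2,p}$ or Schauder estimates then give $\abs{\nabla\mu(0)}\leq C(\sup_{B(0,1)}\abs\mu+\sup_{B(0,1)}\abs{\Delta\mu})\leq A'\e^{-cR}$.

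The main obstacle is Stage 1, namely obtaining a bound independent of the boundary data. We expect it to require verifying the constant $C_0$ in the blow-up barrier, using $2\sinh(2W)\geq \e^{2W}-\e^{-2W}$ and checking that the negative term is harmless because $W\geq \log(2/\rho)+C_0$ is bounded below. The remaining stages are routine comparison arguments.
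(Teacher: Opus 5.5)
Your proposal is correct and follows essentially the same three-step route as the paper: a universal Keller--Osserman-type sup bound from a supersolution that blows up on the boundary of a unit ball, then comparison with the radial modified Bessel solution $I_0$ of the linearized equation, then the standard interior gradient estimate. The differences are cosmetic. You compare $\pm\mu$ directly with the explicit barrier $W_\rho$, whereas the paper first builds the maximal solution $u$ as a monotone limit under the barrier $V$; that detour is not actually needed. You also use the smooth subsolution $\mu^2$ in place of $|\mu|$ on $\{|\mu|>0\}$, and since $1/I_0(t)\sim\sqrt{2\pi t}\,\e^{-t}$ carries a polynomial factor, you really get any $c<\sqrt2$ rather than exactly $\sqrt2$, which is harmless.
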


\begin{proof}
    Consider the following boundary value problem on the unit disk $B(0,1)$:
    \begin{equation*}
        \left\{
        \begin{aligned}
            &\Delta u_n= 2\sinh(2u_n),\\
            &u_n|_{\partial B(0,1)}=n.
        \end{aligned}
        \right.
    \end{equation*}
    The comparison principle gives
    \[
        0<u_n\leq u_{n+1}.
    \]
    Let
    \[
        V(z) = 2-\log(1-|z|^2).
    \]
    A direct calculation gives
    \[
        \Delta V(z) = \frac{4}{(1-|z|^2)^2}.
    \]
    Also,
    \[
        2\sinh(2V) (z)=\frac{1}{(1-|z|^2)^2}(\e^4-\e^{-4}(1-|z|^2)^4) \geq \frac{4}{(1-|z|^2)^2} = \Delta V(z).
    \]
    Furthermore,
    \[
        V(|z|) \to +\infty, \quad |z| \to 1.
    \]
    The comparison principle gives
    \[
        u_n \leq V.
    \]
    Thus the increasing sequence $\{u_n\}$ converges to a smooth solution $u$ which is pointwise positive. 

    Since $u \to +\infty$ as $z \to \partial B(0,1)$, any positive maximum of $|\mu|-u$ must be attained at an interior point $p\in B(0,1)$. We have 
    \[
        |\mu|(p)>0.
    \]
    Observe that $|\mu|$ is smooth near $p$ and 
    \[
        \Delta |\mu| = 2 \sinh(2|\mu|).
    \]
    We obtain
    \[
        0 \geq \Delta(|\mu|-u)(p) = 2(\sinh(|2\mu|)-\sinh(2u)) >0,
    \]
    which is a contradiction. Therefore
    \[
        |\mu|\leq u,
    \]
    and in particular,
    \[
        |\mu(0)|\leq u(0)\leq 2.
    \]
    The same estimate holds for any point $p \in B(0,R-2)$ since that $d(p,\partial B(0,R)) > 1$.

    On $B(0,R-2)$, let
    \[
        g_R(z)= 2\frac{I_0(2|z|)}{I_0(2R-4)}>0,
    \]
    where $I_0$ is the modified Bessel function. $g_R$ solves the equation
    \[
        \Delta g_R = 4g_R.
    \]
    Wherever $|\mu|>0$, we have
    \[
        \Delta |\mu| = 2 \sinh(2|\mu|) \geq 4|\mu|.
    \]
    Applying the comparison principle on $B(0,R-2) \cap \{|\mu|>0\}$, we obtain 
    \[
        |\mu| \leq g_R, \, \mathrm{on}\,B(0,R-2).
    \]
    In particular,
    \[
        |\mu(0)| \leq g_R(0)= \frac{2}{I_0(2R-4)}.
    \]
    Since 
    \[
        I_0(t) \sim \frac{\e^t}{\sqrt{2\pi t}}
    \]
    for large $t$, it follows that
    \[
        |\mu(0)| \leq C\sqrt{R} \e^{-2R} \leq C' \e^{-cR}.
    \]
    The same inequality holds for every $p \in B(0,1)$ since $d(p,\partial B(0,R))>R-2+1=R-1$. 

    It remains to estimate the gradient. We obtain
    \[
        \norm*{\mu}_{L^\infty(B(0,1))} \leq C'\e ^{-cR},
    \]
    and thus
    \[
        \norm*{\Delta\mu}_{L^\infty(B(0,1))} \leq C''\e ^{-cR}.
    \]
    The standard interior gradient estimate yields
    \[
        |\nabla \mu|(0) \leq D(\norm*{\mu}_{L^\infty(B(0,1))} + \norm*{\Delta\mu}_{L^\infty(B(0,1))})  \leq K\e^{-cR}.
    \]
    This completes the proof.
\end{proof}

Suppose $\Omega \subset \C$ is a planar domain that intersects the real axis and
\[
    f:\Omega\to \HH^2
\]
is harmonic with $ \Hopf(f)=\dd z^2.$
Let $\eta=\frac{1}{2}\log H$. Then the pullback metric is 
\[
    f^*\sigma=4\cosh^2{\eta} \dd x^2 + 4 \sinh^2{\eta} \dd y^2.
\]
The Bochner equations for $H$ and $L$ give
\[
    \Delta \eta = \frac{1}{4}(\Delta \log H-\Delta \log L) = H-L = 2\sinh(2\eta).
\]
\cref{lem:decay} shows that, for $p\in \Omega$ and $2<r<d(p,\partial \Omega)$, we have 
    \[
        |\eta(p)|+|\nabla \eta (p)| \leq A \e^{-cr}.
    \]
Now consider the curve $\alpha(x) =f (x,0)$. Let $\mathcal{J}$ denote the complex structure on $\HH^2$, that is, the positively oriented rotation by $\pi/2$ in each tangent space. Define an oriented orthonormal frame $\{e_1,e_2\}$ of the pullback tangent bundle $f^*T\HH^2$ as follows:
\[
    e_1= \frac{f_x}{|f_x|}, \quad e_2=\mathcal{J}e_1.
\]
We have $f_x = 2\cosh(\eta) e_1, f_y=2\sinh(\eta) e_2$. The harmonic map equation is 
\[
    \nabla_{\frac{\partial}{\partial x}}f_x + \nabla_{\frac{\partial}{\partial y}}f_y=0.
\]
We also have
\[
    \nabla_{\frac{\partial}{\partial x}}f_x = 2 \eta_x\sinh(\eta) e_1 + 2\cosh(\eta)\nabla_{\frac{\partial}{\partial x}}e_1
\]
and
\[
    \nabla_{\frac{\partial}{\partial y}}f_y = 2 \eta_y\cosh(\eta) e_2 + 2\sinh(\eta)\nabla_{\frac{\partial}{\partial y}}e_2.
\]
Taking inner product with $e_2$, we obtain:
\[
    \inner{\nabla_{\frac{\partial}{\partial x}}f_x}{e_2} = 2\cosh(\eta) \inner{\nabla_\frac{\partial}{\partial x}e_1}{e_2} = - \inner{\nabla_{\frac{\partial}{\partial y}}f_y}{e_2} = -2 \eta_y \cosh(\eta),
\]
since
\[
    \inner{\nabla_\frac{\partial}{\partial y}e_2}{e_2} = 0.
\]
Thus the geodesic curvature of $\alpha$ is
\[
    k_g(\alpha)= \frac{1}{2\cosh(\eta)} \inner{\nabla_\frac{\partial}{\partial x}e_1}{e_2} = \frac{-\eta_y}{2\cosh(\eta)}.
\]
By \cref{lem:decay}, we obtain:
\begin{equation}\label{eq:kg-bound}
    |k_g(\alpha)|(p) \leq \frac{1}{2}A\e^{-cr}
\end{equation}
if $2<r<d(p,\partial\Omega)$. We emphasize that the constant $A,c>0$ are independent of $f$, by \cref{lem:decay}. 

\begin{remark}\label{rem:dz2}
    For a harmonic map $f:\C\to \HH^2$ with constant Hopf differential $\dd z^2$, $\eta$ is an entire smooth solution of sinh-Gordon equation on $\C$. Applying \cref{lem:decay} and choosing $R$ arbitrarily large, we show that $\eta$ is constantly $0$, i.e., $H=L=1.$
    Thus
    \[
        f^*\sigma=4\dd x^2.
    \]
    The map $f$ is independent of $y$, and the harmonic map equation becomes the geodesic equation. Thus $f|_{\R}$ is a geodesic of constant speed $2$.
\end{remark}

\section{A Hessian inequality}\label{app:hess}
In this section, we introduce a Hessian inequality and use it to prove that $\Hopf(h)$ is meromorphic at each of $p_1,...,p_k$. The following theorem is the special case $\kappa=0$ of \cite[Lemma 2.2]{ChenJostWang2013}:
\begin{theorem}
    Denote $d$ the distance function on $\HH^2$. For $x,y \in \HH^2$ and $V \in T_x \HH^2, W \in T_y\HH^2$, we have
    \[
        \nabla^2\left(\frac{1}{2}d^2\right)_{(x,y)}\big((V,W),(V,W)\big) \geq d(x,y)\int_0^{d(x,y)} \norm*{\dot{J}(t)}^2\dd t.
    \]
    Here $J$ is the Jacobi field along the unit speed geodesic $l$ connecting $x$ and $y$ with $J|_x = V, J|_y = W$, and $\dot{J}$ is the covariant derivative of $J$ along $l$.
\end{theorem}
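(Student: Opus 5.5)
The plan is to view the statement as a second-variation computation for $\frac12 d^2$ on $\HH^2\times\HH^2$, with the curvature sign entering through a Jacobi field comparison. Let $x=l(0)$, $y=l(d)$ with $d=d(x,y)>0$; the case $x=y$ follows by continuity or is trivial. Choose the geodesic $c(s)=(\alpha(s),\beta(s))$ in $\HH^2\times\HH^2$ with initial velocity $(V,W)$. Then the Hessian equals $\frac{\dd^2}{\dd s^2}\frac12 d^2(\alpha(s),\beta(s))|_{s=0}$, since $c$ is a geodesic of the product. Let $\Gamma(s,t)$, $t\in[0,1]$, be the constant-speed geodesics from $\alpha(s)$ to $\beta(s)$. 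Then $\frac12 d^2(\alpha(s),\beta(s))=E(s):=\frac12\int_0^1\norm*{\partial_t\Gamma}^2\dd t$. The variation field $\tilde J=\partial_s\Gamma|_{s=0}$ is the Jacobi field along $\Gamma(0,\cdot)$ with endpoints $V,W$. Reparametrizing to unit speed, $\tilde J(t)=J(dt)$.

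Next I would apply the standard second variation formula to $E$. The endpoint curves $\alpha,\beta$ are geodesics, so the boundary terms $\inner{\nabla_s\partial_s\Gamma}{\partial_t\Gamma}$ vanish. This leaves
\[
E''(0)=\int_0^1\Big(\norm*{\nabla_t\tilde J}^2-R(\tilde J,\partial_t\Gamma,\tilde J,\partial_t\Gamma)\Big)\dd t .
\]
Since $\HH^2$ has curvature $-1$, the curvature term equals $+d^2\norm*{\tilde J^\perp}^2\ge0$, so it can be dropped, which is precisely the $\kappa=0$ comparison. What remains is $\int_0^1\norm*{\nabla_t\tilde J}^2\dd t$. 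Changing variables $\tau=dt$, we have $\nabla_t\tilde J=d\,\dot J(\tau)$ and $\dd t=\dd\tau/d$, so this becomes $d\int_0^{d}\norm*{\dot J}^2\dd\tau$, which is the claimed right-hand side.

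The step that needs the most care is checking that $(s,t)\mapsto\Gamma$ is a smooth variation and that its variation field is exactly the Jacobi field with the prescribed endpoints. This is where uniqueness of geodesics in $\HH^2$ and smooth dependence on the endpoints (no conjugate points) are used. The other point to verify is the bookkeeping of the boundary term: in the second variation of energy for a variation with free endpoints, the extra terms are $\inner{\nabla_s\partial_s\Gamma}{\partial_t\Gamma}\big|_{t=0}^{t=1}$, and these vanish because $\alpha,\beta$ are geodesics. Finally, since $\frac12 d^2$ is smooth on $\HH^2\times\HH^2$ (again, no cut locus), the Hessian in the statement coincides with the second derivative along the product geodesic $c$, and no semi-concavity or viscosity issues arise.
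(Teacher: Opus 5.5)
Your proof is correct. The paper does not prove this statement: it quotes it as the case $\kappa=0$ of \cite[Lemma 2.2]{ChenJostWang2013}, a lemma formulated with a general curvature parameter $\kappa$, and then derives \eqref{eq:hess} from it by Cauchy--Schwarz.

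You instead give the direct second-variation computation on $\HH^2\times\HH^2$, and each step checks out:
\begin{itemize}
\item the family $\Gamma(s,t)=\exp_{\alpha(s)}\big(t\exp_{\alpha(s)}^{-1}\beta(s)\big)$ is smooth because $\HH^2$ is Cartan--Hadamard;
\item its variation field is the reparametrized Jacobi field $J(dt)$, by uniqueness of Jacobi fields with prescribed endpoint values;
\item the boundary terms vanish because $\alpha$ and $\beta$ are geodesics.
\end{itemize}
This yields the exact identity
\[
\nabla^2\Big(\tfrac12 d^2\Big)_{(x,y)}\big((V,W),(V,W)\big)=d\int_0^{d}\Big(\norm*{\dot J}^2+\norm*{J^\perp}^2\Big)\dd t,
\]
and the claim follows by dropping the nonnegative curvature contribution.

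The two routes buy different things. The citation is shorter and covers general upper curvature bounds $\kappa$; for $\kappa>0$ the curvature term need not be nonnegative, so a genuine comparison argument is required there. Your route is self-contained and uses only that $\HH^2$ has curvature $-1\le 0$ and no cut locus. It also shows that the inequality loses exactly the normal term $d\int_0^d\norm*{J^\perp}^2$, which plays no role in the later application in \cref{prop:pole}.
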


Suppose $l(0)=x, l(d(x,y))=y$. Let $T_t$ denote the parallel transport along $l$ from $T_{l(t)}\HH^2$ to $T_{l(0)}\HH^2$. We have
\[
    T_{d(x,y)}W-V = \int_0^{d(x,y)} T_t\dot{J}(t) \dd t.
\]
Cauchy-Schwarz inequality gives:
\begin{equation}\label{eq:hess}
    \norm*{T_{d(x,y)}W-V}^2 \leq d(x,y) \int _0 ^{d(x,y)} \norm*{\dot{J}(t)}^2\dd t \leq \nabla^2\left(\frac{1}{2}d^2\right)_{(x,y)}\big((V,W),(V,W)\big).
\end{equation}

\begin{proposition}\label{prop:pole}
    Suppose $\Phi=\varphi(z)\dd z^2$ is a degree $d$ polynomial quadratic differential with associated Scherk map $\omega$. Let $h: \Omega:=\{|z|>1\} \to \HH^2$ be harmonic and satisfies $d_{\HH^2}(h(z),\omega(z)) \leq M$ for a constant $M>0$ and all $z \in \Omega$, then the order of $\Hopf(h)$ at $\infty$ is $-d-4$.
\end{proposition}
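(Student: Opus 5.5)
We have to show two things about $\Hopf(h)$ at $\infty$: that it is meromorphic there, and that its order is exactly $-d-4$. The order is computed in the coordinate $w=1/z$: the polynomial $\Phi$ has order $-d-4$ at $\infty$, since $dz^2=w^{-4}dw^2$. Write $\psi=\Hopf(h)$ and $\varphi\,dz^2=\Hopf(\omega)$.

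The key step is an integral bound comparing $\psi$ with $\varphi$. By \cref{lem:relation-density}, $|\psi-\varphi|$ is controlled by $\norm*{\dd h-\dd\omega}$ after parallel transport, times $\norm*{\dd h}+\norm*{\dd\omega}$. Let $\rho=\tfrac12 d^2(h,\omega)$. Apply \eqref{eq:hess} with $V=\dd h(e_i)$ and $W=\dd\omega(e_i)$ and sum over $i$. Since $h$ and $\omega$ are harmonic, $\Delta\rho$ equals the sum of the Hessians of $\tfrac12 d^2$ on $(\dd h(e_i),\dd\omega(e_i))$. This gives
\[
    \sum_i \norm*{T\,\dd\omega(e_i)-\dd h(e_i)}^2 \leq \Delta \rho .
\]
Here $0\le\rho\le M^2/2$.

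Next, integrate against cutoffs on dyadic annuli $A_R=\{R<|z|<2R\}$. Choose a cutoff $\chi$ supported in $\{R/2<|z|<4R\}$ with $|\nabla\chi|\lesssim R^{-1}$ and $|\Delta\chi|\lesssim R^{-2}$. Then
\[
    \int\chi\,\Delta\rho=\int\rho\,\Delta\chi\le CM^2 ,
\]
so $\int_{A_R}\norm*{\dd h-T\dd\omega}^2\le C$ uniformly in $R$.

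Then bound $\dd\omega$. Because $\omega$ is a Scherk map, \cref{lem:decay} (with $H\approx L\approx|\varphi|$ away from the zeros) gives $\norm*{\dd\omega}^2\lesssim|\varphi|\lesssim R^d$ on $A_R$. Combining with Cauchy–Schwarz and the previous step,
\[
    \int_{A_R}|\psi-\varphi|\le C\Big(\int_{A_R}\norm*{\dd h-T\dd\omega}^2\Big)^{1/2}\Big(\int_{A_R}\norm*{\dd h}^2+\norm*{\dd\omega}^2\Big)^{1/2}\lesssim R^{d/2+1},
\]
where $\int_{A_R}\norm*{\dd h}^2$ has the same order as $\int_{A_R}\norm*{\dd\omega}^2\lesssim R^{d+2}$, again by the previous step.

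With this, the conclusion follows quickly. By the mean value property for the holomorphic function $\psi-\varphi$ on discs of radius $\sim R$ inside $A_R$,
\[
    |\psi-\varphi|(z)\lesssim R^{-2}R^{d/2+1}=R^{d/2-1}\quad\text{for }|z|\sim R .
\]
Consequently the Laurent expansion of $\psi-\varphi$ at $\infty$ has no terms $z^n$ with $n>d/2-1$. So $\psi$ is meromorphic at $\infty$, i.e. it has no essential singularity. Moreover, $\psi=\varphi+O(|z|^{d/2-1})$, and $d/2-1<d$, so the leading term $c_dz^d$ of $\varphi$ survives in $\psi$. Hence the order of $\psi$ at $\infty$ is exactly $-d-4$. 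In the degenerate case $d=0$ this reads $\psi=c_0+O(|z|^{-1})$, which gives order $-4$.

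The main obstacle is the uniform estimate $\norm*{\dd\omega}^2\lesssim|\varphi|+1$ on the whole of $\{|z|>1\}$, including near the finitely many zeros of $\varphi$ and in the vertical directions. Near the zeros I would use the gradient estimate \cref{thm:gradient} on unit balls instead; a bounded compact region does not affect the growth rate. A cruder alternative, if the decay lemma is hard to apply globally, is to use \cref{thm:gradient} for both $h$ and $\omega$ on balls of radius $\sim R$. This gives $\norm*{\dd h}$ and $\norm*{\dd\omega}$ of polynomial size in $R$, which already yields meromorphy. The exact order then still follows from the sharper $L^2$ comparison, since only the top coefficient needs to be matched.
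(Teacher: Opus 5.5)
Your proposal is correct and follows essentially the same route as the paper: the Hessian inequality \eqref{eq:hess} applied to $\tfrac12 d^2(h,\omega)$, and a scale-invariant cutoff giving a uniform bound on $\int\norm*{\dd h-T\dd\omega}^2$ over regions of size $\sim\abs{z_0}$. It then uses an $O(\abs{z_0}^{d+2})$ energy bound for $\omega$ there, Cauchy--Schwarz and the mean value property, and your explicit Laurent-coefficient argument spells out the meromorphy that the paper leaves implicit. The only difference is how you bound the energy of $\omega$: the paper sidesteps your ``main obstacle'' by writing $e(\omega)=J(\omega)+2L\le J(\omega)+2\abs{\varphi}$ (since $H\ge L$ and $HL=\abs{\varphi}^2$) and using that $\int_{\C}J(\omega)$ is the finite area of the ideal polygon, so no pointwise control near the zeros or along vertical directions is needed (note also that your Cheng-estimate fallback gives only meromorphy on its own, since pinning down the exact order still requires a sharp energy bound for $\omega$ of this kind).
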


\begin{proof}
    Let $u(z) = d^2_{\HH^2}(h(z),\omega(z))/2$. For each $z \in \Omega$, choose an orthonormal basis $\{e_1,e_2\}$ of $T_z\Omega$. In \eqref{eq:hess}, replacing $V$ by $\dd h|_{z}(e_i)$ and replacing $W$ by $\dd \omega|_z(e_i)$, we obtain:
    \[
        \nabla^2 \left(\frac{1}{2}d^2\right)\Big(\big(\dd h(e_i),\dd\omega(e_i)\big),\big(\dd h(e_i),\dd\omega(e_i)\big)\Big) \geq \norm*{\dd h(e_i) - T \dd \omega(e_i)}^2.
    \]
    Here $T$ is a parallel transport.
    Summing over $i=1,2$, we obtain pointwisely:
    \[
        \Delta u \geq \norm*{\dd h - T \dd \omega}^2
    \]
    Choose $\eta \in C_c^{\infty}(B(0,2))$ such that
    \[
        0\leq \eta \leq 1, \quad \eta \equiv 1\,\mathrm{on}\,B(0,1)
    \]
    and
    \[
        \int_{B(0,2)} |\Delta \eta| \leq C.
    \]
    For $z_0 \in \Omega$ with $|z_0|>3$, let $\eta_{z_0}(z)= \eta\big(4(z-z_0)/|z_0|\big)$ be in $C^\infty_c(B(z_0,|z_0|/2))$. Note that
    \[
        \int_{B(z_0,|z_0|/2)} |\Delta \eta_{z_0}| \leq C
    \]
    still holds. We have
    \begin{align*}
        \int_{B(z_0,|z_0|/4)} \norm*{\dd h - T \dd \omega}^2 &\leq \int_{B(z_0,|z_0|/2)} \eta_{z_0} \norm*{\dd h - T \dd \omega}^2\\
        & \leq \int_{B(z_0,|z_0|/2)} \eta_{z_0} \Delta u =\int_{B(z_0,|z_0|/2)} u \Delta\eta_{z_0} \\
        & \leq \int_{B(z_0,|z_0|/2)} u |\Delta\eta_{z_0}| \\
        & \leq \frac{1}{2}CM^2.
    \end{align*}
    Thus    
    \begin{align*}
        \int_{B(z_0,|z_0|/4)} \norm*{\dd h}^2 &\leq CM^2 +2 \int_{B(z_0,|z_0|/4)} \norm *{\dd \omega} ^2 \\
        & \leq D(1+|z_0|^{d+2})
    \end{align*}
    for some constant $D$ independent of $z_0$, since $e(\omega) \leq J(\omega) + 2 \norm*{\Phi}$, the integral of $J$ over $\C$ is a constant, and $\Phi$ is a polynomial. We then have
    \begin{align*}
        \abs*{\Hopf(h)-\Phi}^2(z_0) &\leq D' |z_0|^{-4} \left(\int_{B(z_0,|z_0|/4)} \abs*{\Hopf(h)-\Phi} \right)^2\\
        & \leq D''|z_0|^{-4} \left(\int_{B(z_0,|z_0|/4)} \norm*{\dd h -T \dd \omega} (\norm*{\dd h}+\norm*{\dd \omega})\right)^2 \\
        & \leq D'' |z_0|^{-4} \int_{B(z_0,|z_0|/4)} \norm*{\dd h -T \dd \omega}^2 \int_{B(z_0,|z_0|/4)}  (\norm*{\dd h}+\norm*{\dd \omega})^2 \\
        & \leq K|z_0|^{-4}(1+|z_0|^{d+2})
    \end{align*}
    for absolute constant $D',D'',K>0$. Thus 
    \[
        \abs*{\Hopf(h)-\Phi}(z_0) \leq K'(|z_0|^{-2}+|z_0|^{d/2-1}).
    \]
    All the constants above are independent with $z_0$, therefore $\Hopf(h)$ and $\Phi$ have the same pole order at infinity, as desired. 
\end{proof}

\section{Polygonal exhaustion}\label{app:polygonexhaustion}
In this section, we give the construction of a polygonal exhaustion used in the proof of \cref{prop:principaltobounded}. This construction is similar to \cite[Definition 2.27 and Proposition 2.29]{Gupta2021Harmonic}.

Let $q$ be a holomorphic quadratic differential on $\{|z|>1\}$ which has a pole of order $n\geq 4$ at infinity. We assume $n$ is even, otherwise we do the construction on a double cover. This makes sense since in \cref{sec:principal-bound},the principal part of a quadratic differential of odd order is defined on a double cover. We also assume $q$ extends holomorphically to a neighborhood of $\{|z|\geq 1\}$.

We choose $R>0$ sufficiently large so that $q$ never vanishes on $\{|z|>R\}$. In $\{|z|>R\}$, we can choose $m=n-2$ horizontal half planes $H_1,\dots , H_m$ and $m$ vertical half planes $V_1,\dots ,V_m$ such that, their union is a neighborhood of infinity, each $H_i$ intersects exactly $V_i$ and $V_{i-1}$ and each $V_i$ intersects exactly $H_i$ and $H_{i+1}$. We arrange these half planes in cyclic order
\[
    H_1,V_1,H_2,V_2,\dots,H_m,V_m.
\]

We first choose a natural coordinate $z_1=x_1+\ii y_1$ on $H_1$ such that $z_1(H_1)$ is the upper half plane. Let $\varepsilon_1=1$ and $b_1=0$. Suppose $\partial V_1 \cap H_1$ is the line $\{x_1=a_1\} \cap \{y_1>0\}$. If $V_1 \cap H_1$ is on the left side of $\partial V_1$, set $\tau_1=-1$. If $V_1 \cap H_1$ is on the right side of$\partial V_1$, set $\tau_1=1$. Choose the natural coordinate $w_1=u_1+\ii v_1$ on $V_1$ such that $w_1=z_1$ on $V_1 \cap H_1$. Thus $V_1$ is parameterized by $\{\tau_1 u_1>\tau_1a_1\}$.

Choose the natural coordinate $z_2=x_2+\ii y_2$ on $H_2$ such that it agrees with $w_1$ on $H_2 \cap V_1$. Similarly, suppose $\partial H_2 \cap V_1$ is the line $\{v_1=b_2\}\cap \{\tau_1u_1>\tau_1a_1\}$. If $H_2 \cap V_1$ lies above $\partial H_2$, set $\varepsilon_2=1$. If $H_2 \cap V_1$ lies below $\partial H_2$, set $\varepsilon_2=-1$. Thus $H_2$ is parameterized by $\{\varepsilon_2y_2>\varepsilon_2b_2\}$. Since $H_1$ does not intersect $H_2$, we must have $\varepsilon_2=-\varepsilon_1$.

We continue successively, and end this process when all the horizontal and vertical half planes are parameterized by $z_k(H_k)=\{\varepsilon_k y_k>\varepsilon_kb_k\}$ and $w_k(V_k)=\{\tau_ku_k>\tau_ka_k\}$. The signs also alternate, i.e. $\varepsilon_{k+1}=-\varepsilon_k$ and $\tau_{k+1}=-\tau_k$. Note that
\begin{align*}
    &\im(w_k(\partial H_k\cap V_k))=b_k,\\
    &\im(w_k(\partial H_{k+1}\cap V_k))=b_{k+1},\\
    &\re(z_k(\partial V_k\cap H_k))= a_k,\\
    &\re(z_k(\partial V_{k-1}\cap H_k))= a_{k-1}.
\end{align*}
Here we need to deal with the overlap $V_m \cap H_1$ separately. Suppose 
\[
    \im(w_m(\partial H_1 \cap V_m)) = b_{m+1},
\]
and 
\[
    \re(z_1(\partial V_m \cap H_1)) = a_0.
\]
We make no further adjustment to the parameter $z_1$ by $b_{m+1}$ or $a_0$.

We define $H_k(T)= z_k^{-1}(\{\varepsilon_k y_k>\varepsilon_kb_k+T\})$ and $V_k(T)=w_k^{-1}(\{\tau_ku_k>\tau_ka_k+T\})$. The union $U(T)=\cup_{i=1}^m \left(H_i(T)\cup V_i(T)\right)$ is a neighborhood of infinity and its boundary is
\[
    \Gamma(T)=\partial U(T)= \alpha_1(T) \cup \beta_1(T) \cup \dots \cup \alpha_m(T) \cup \beta_m(T),  
\]
where $\alpha_i(T)=\partial H_i(T) \cap \Gamma(T)$ is a horizontal segment, and $\beta_i(T) = \partial V_i(T) \cap \Gamma(T)$ is a vertical segment. Thus $\Gamma(T)$ can be viewed as a "polygon" whose edges are alternating horizontal and vertical lines.

By the choices of the coordinates, we obtain
\begin{align*}
    &\im[w_k(\partial H_k(T)\cap V_k(T))]=b_k+\varepsilon_k T,\\
    &\im[w_k(\partial H_{k+1}(T)\cap V_k(T))]=b_{k+1}+\varepsilon_{k+1} T,\\
    &\re[z_k(\partial V_k(T)\cap H_k(T))]= a_k+\tau_k T,\\
    &\re[z_k\left(\partial V_{k-1}(T)\cap H_k(T)\right)]= a_{k-1}+ \tau_{k-1}T.
\end{align*}
Let 
\[
    M= \max\{|a_1|,|b_1|,\dots |a_m|, |b_m|, |a_0|, |b_{m+1}| \}.
\]
Then the $q$-length of $\alpha_i(T)$ and $\beta_i(T)$ satisfies
\begin{equation}\label{eq:horizontalO1}
    2T-2M \leq l_q(\alpha_i(T)) \leq 2T +2M
\end{equation}
and
\begin{equation}\label{eq:verticalO1}
    2T-2M \leq l_q(\beta_i(T)) \leq 2T +2M.
\end{equation}
Also observe that the $q$-distance between $\Gamma(T)$ any fixed point $z_0 \in \{1<|z|\leq R\}$  satisfies:
\[
    T \leq d_q(\Gamma(T),z_0) \leq D_1T + D_2.
\]
Here $D_1,D_2>0$ are constants independent of $T$ and $z_0$. Take a increasing sequence $\{T_j\} \to + \infty$, such that $\Gamma(T_{j+1})$ lies outside $\Gamma(T_j)$. Then the sequence of region bounded by $\Gamma(T_j)$ and the unit circle is the desired exhaustion.  

\bibliographystyle{amsalpha}
\bibliography{ref}

\end{document}